%%Version uploaded on Crelle's website on 2/3/2014
\documentclass{amsart}
\usepackage{amsmath}
\usepackage{amssymb}
\usepackage[all]{xy}
\usepackage{comment}

\theoremstyle{plain}
\newtheorem{thm}{Theorem}[section]
\newtheorem{thm*}{Theorem}[section]
\newtheorem{cor}[thm]{Corollary}
\newtheorem{prop}[thm]{Proposition}
\newtheorem{lemma}[thm]{Lemma}

\newtheorem{lemma*}{Lemma}

\theoremstyle{definition}
\newtheorem{defn}[thm]{Definition}
\newtheorem{remark}[thm]{Remark}

\newtheorem*{remark*}{Remark}
\newtheorem{ex}[thm]{Example}
\newtheorem{notation}[thm]{Notation}

\newtheorem{question*}{Question}
\numberwithin{equation}{thm}

\SelectTips{eu}{}

\newcommand{\bM}{\mathbb M}

\newcommand{\bT}{\mathbb T}

\newcommand{\cN}{\mathcal N}
\newcommand{\cU}{\mathcal U}

\def\Rad{\operatorname{Rad}\nolimits}
\def\Spec{\operatorname{Spec}\nolimits}
\def\Ker{\operatorname{Ker}\nolimits}

\def\Lie{\operatorname{Lie}\nolimits}
\def\Soc{\operatorname{Soc}\nolimits}
\def\Coker{\operatorname{Coker}\nolimits}

\def\Im{\operatorname{Im}\nolimits}

\def\dim{\operatorname{dim}\nolimits}

\def\Grass{\operatorname{Grass}\nolimits}
\def\LG{\operatorname{LG}\nolimits}
\def\Coh{\operatorname{Coh}\nolimits}

\def\Ad{\operatorname{Ad}\nolimits}

\def\Max{\operatorname{Max}\nolimits}
\def\Min{\operatorname{Min}\nolimits}
\def\Id{\operatorname{Id}\nolimits}

\newcommand{\cKer}{\mathcal K\text{\it er}}
\newcommand{\cIm}{\mathcal I\text{\it m}}
\newcommand{\cCoker}{\mathcal C\text{\it oker}}
\newcommand{\bSoc}{\mathbb S\rm oc}
\newcommand{\bRad}{\mathbb R\rm ad}

\newcommand{\bG}{\mathbb G}

\newcommand{\cO}{\mathcal O}
\newcommand{\bA}{\mathbb A}

\newcommand{\cF}{\mathcal F}
\newcommand{\bP}{\mathbb P}
\newcommand{\bZ}{\mathbb Z}

\newcommand{\cC}{\mathcal C}

\newcommand{\cL}{\mathcal L}

\newcommand{\cE}{\mathcal E}

\newcommand{\bW}{\mathbb W}

\newcommand{\fp}{\mathfrak p}

\newcommand{\fg}{\mathfrak g}
\newcommand{\fh}{\mathfrak h}

\newcommand{\fu}{\mathfrak u}

\newcommand{\gl} {\mathfrak {gl}}

\newcommand{\fsl} {\mathfrak {sl}}

\newcommand{\fso} {\mathfrak {so}}
\newcommand{\fsp} {\mathfrak {sp}}

\newcommand{\fl}{\mathfrak l}

\newcommand{\ol}{\overline}

%\newcommand{\wt}{\wtdetilde}

%Julia's additional macros

\def\pr{\operatorname{pr}\nolimits}

\def\Spec{\operatorname{Spec}\nolimits}
\def\sl2{\operatorname{SL_{2(2)}}\nolimits}
\def\Ga2{\operatorname{\mathbb G_{\rm a(2)}}\nolimits}
\def\SL{\operatorname{SL}\nolimits}
\def\GL{\operatorname{GL}\nolimits}
\def\PGL{\operatorname{PGL}\nolimits}
\def\Sp{\operatorname{Sp}\nolimits}

\def\SO{\operatorname{SO}\nolimits}

\def\End{\operatorname{End}\nolimits}
\def\Hom{\operatorname{Hom}\nolimits}

\newcommand{\wt}{\widetilde}

\newcommand{\Z}{\mathbb Z}

\newcommand{\bE}{\mathbb E}
\newcommand{\bV}{\mathbb V}

\setcounter{section}{-1}

\date\today

\begin{document}

 \title[Vector bundles ]{Vector bundles associated to Lie algebras}
 
 \author[Jon F. Carlson, Eric M. Friedlander, and Julia Pevtsova]
{Jon F. Carlson$^*$, Eric M. Friedlander$^{**}$, and 
Julia Pevtsova$^{***}$}

\address {Department of Mathematics, University of Georgia, 
Athens, GA}
\email{jfc@math.uga.edu}

\address {Department of Mathematics, University of Southern California,
Los Angeles, CA}
\email{ericmf@usc.edu, eric@math.northwestern.edu}

\address {Department of Mathematics, University of Washington, 
Seattle, WA}
\email{julia@math.washington.edu}

\thanks{$^*$ partially supported by the NSF grant DMS-1001102}
\thanks{$^{**}$ partially supported by the NSF grant DMS-0909314 and DMS-0966589}
\thanks{$^{***}$ partially supported by the NSF grant DMS-0800930 and DMS-0953011}

\subjclass[2000]{17B50, 16G10}

\keywords{restricted Lie algebras, algebraic vector bundles}

\begin{abstract}  
We introduce and investigate a functorial construction which associates
coherent sheaves  to  finite dimensional (restricted) representations of 
a restricted Lie algebra $\fg$.  These are sheaves 
on locally closed subvarieties of the projective variety $\bE(r,\fg)$ of 
elementary subalgebras of $\fg$ of dimension $r$.  We show that 
representations of constant radical or socle rank studied in \cite{CFP3} which generalize modules of constant Jordan type 
lead to algebraic vector bundles on $\bE(r,\fg)$. For $\fg = \Lie(G)$, the Lie algebra
of an algebraic group $G$,  rational representations of $G$ enable us to realize 
familiar algebraic vector bundles on $G$-orbits of $\bE(r, \fg)$. 
\end{abstract}

\maketitle

\section{Introduction}

In \cite{CFP3}, the authors introduced the projective 
algebraic variety $\bE(r,\fg)$ of {\it elementary
subalgebras} $\epsilon$ of $\fg$ of dimension $r$ 
of a given finite dimensional restricted Lie algebra $\fg$ 
over an algebraically closed field $k$ of characteristic 
$p > 0$.    An elementary subalgebra $\epsilon$ of $\fg$
is  a commutative Lie subalgebra  restricted to which the 
$p$-th power operator $(-)^{[p]}: \fg \to \fg$ is trivial.  In this paper,
we explore the connections between geometric structures 
on these varieties $\bE(r,\fg)$ and 
restricted representations of $\fg$.

We recall that the category of restricted representations 
of $\fg$ is equivalent to the category of 
modules for the finite dimensional associative $k$-algebra 
$\fu(\fg)$, the restricted enveloping algebra
of $\fg$.   We construct coherent sheaves and algebraic 
vector bundles on $\bE(r,\fg)$ associated 
to finite dimensional $\fu(\fg)$-modules,  extending 
considerations in \cite{FP3}
(the case $r=1$) and \cite{CFP2} (the special case 
in which $\fg$ is itself an elementary Lie algebra).  

For a given finite dimensional $\fu(\fg)$-module 
$M$ and a given $r > 0$, we consider radicals and 
socles of $M$ restricted to an elementary subalgebra $\epsilon$ of $\fg$  
as $\epsilon \in \bE(r,\fg)$ varies.   
In Section ~\ref{sec:cohsheaves} we show that for any locally closed subvariety 
$X \subset \bE(r, \fg)$, any $j>0$ and any $\fu(\fg)$-module $M$, there are sheaves 
$\cKer^{j,X}(M)$, $\cIm^{j, X}(M)$ on $X$ which depend functorially on $M$ and whose 
generic fiber is identified naturally with the $j$-th socle or the $j$-th radical of 
$M$. Consequently, these ``image" and ``kernel" sheaves
 encode  considerable information about the action of 
$\fg$ on $M$, with local input the action of  elementary
subalgebras $\epsilon$ on $M$.   Much is known 
about the modules for an elementary Lie algebra (the category of 
which is equivalent to the more familiar category 
of $k(\bZ/p\bZ^{\oplus r})$-modules), even though this
category is wild if $r >1$, unless $r=2, p =2$.

We present two different but equivalent constructions of the sheaves $\cKer^{j,X}(M), \cIm^{j, X}(M)$. 
Our first construction 
uses equivariant descent and natural operators on coherent 
sheaves on the Stiefel variety which is a $\GL$-torsor over a Grassmannian. 
The second construction  involves a straight-forward patching technique, making use of the 
standard affine charts on the Grassmannian and allowing for an easy identification of the generic fiber. 
It is the patching construction that allows us to show in Section~\ref{sec:geominv} 
how modules of constant Jordan type
(and, more generally, modules of constant 
$r$-radical rank and constant  $r$-socle rank) lead to vector
bundles on $\bE(r,\fg)$.
\sloppy{

}

As in \cite{FP3} and \cite{CFP3}, our constructions yield detailed  
 ``new" invariants for $\fu(\fg)$-modules.   
For example, if $\fg = \Lie(G)$ for an affine 
algebraic group $G$ and if $M$ a rational $G$-module, 
$\cIm^{j,X}(M), \ \cKer^{j,X}(M)$ are $G$-equivariant 
algebraic vector bundles provided that 
$X \subset \bE(r,\fg)$ is a $G$-orbit.   
One can ask which coherent sheaves and 
vector bundles can be realized as image and kernel sheaves
of $\fu(\fg)$-modules.  Computations are difficult,
which is to be expected granted the subtleties which
already arise in the case in which $\fg$ is itself 
an elementary Lie algebra as seen in \cite{CFP2}.

A second type of application should arise from  
the explicit nature of our construction of coherent sheaves
from the data of a finitely generated $\fu(\fg)$-module.  
For various types of $\fu(\fg)$-modules $M$ and
for certain subvarieties $X \subset \bE(r,\fg)$, we 
obtain vector bundles; since $X$ is typically singular,
such explicit constructions should provide insight 
into the difficult challenge of understanding 
algebraic vector bundles on singular varieties.   

Yet another application is the explicit construction 
of  familiar vector bundles such as tangent and
cotangent bundles on certain projective varieties 
in terms of  $ \cKer^{j,X}(M)$, $\cIm^{j,X}(M)$, and other similarly constructed sheaves.
In Section \ref{sec:examples}, we investigate 
examples arising from rational modules for an affine algebraic
group, whereas in Section \ref{sec:Gsemi} we 
provide further examples which do not arise from actions
of an algebraic group.

\vspace{0.1in}
The paper is organized as follows. In Section \ref{sec:cohsheaves} 
we present our two constructions of the image and kernel sheaves and show that they are equivalent.
Following the construction, we suggest in Section \ref{sec:geominv}  methods 
to extract geometric invariants for a $\fu(\fg)$-module $M$ arising from our image and 
kernel sheaves on $\bE(r,\fg)$ for various $r$.  The challenge,
which appears to lend itself to only incremental 
progress, is to search among all the geometric data
one obtains for computable invariants which distinguish 
many classes of modules and suggests families
of modules worthy of further study.

If $M$  is the restriction of a rational $G$-module, 
the action of $G$ on $\bE(r,\fg)$ equips
the coherent sheaves  $\cIm^{j,X}(M)$ and 
$\cKer^{j,X}(M)$ on a $G$-stable subvariety $X \subset \bE(r,\fg)$
with the structure of  $G$-equivariant coherent 
sheaves on $X$.  
In  Section \ref{sec:Gorbit}, we focus on the 
context in which $X = G\cdot \epsilon \subset \bE(r,\fg)$ 
is a $G$-orbit and $M$ a rational $G$-module; 
in this case, the image and kernel sheaves  $\cIm^{j,X}(M)$ 
and $\cKer^{j,X}(M)$ are $G$-equivariant algebraic 
vector bundles on  $X$.   If the orbit map 
$\phi_\epsilon: G \to \bE(r,\fg)$ is separable so that
$G\cdot \epsilon$ is isomorphic to $H = G/G_\epsilon$, 
 then we identify in Theorem \ref{thm:functor}  
the vector bundles $\cIm^{j,X}(M), \ \cKer^{j,X}(M)$ on $G \cdot \epsilon$ 
as the $H$-equivariant vector bundles obtained by
induction starting with the representations of $H$
on $\Rad^j(\epsilon^*M)$, $\Soc^j(\epsilon^*M)$, 
where $\epsilon^*M$ denotes the restriction of $M$ to  $\epsilon$. 
Using this identification, we realize many familiar 
vector bundles as image and kernel bundles 
associated to rational $G$-modules.  Hence, for $\fg = \Lie(G)$
and $M$ a rational $G$-module, we get explicitly determined
algebraic vector  bundles associated to $M$ which we can
view as invariants of $M$ for each choice 
of $r > 0, \ X \subset \bE(r,\fg)$ and $j$, such that $1 \leq j \leq (p-1)r$.

The final section of this paper is devoted to vector bundles
which arise from the semi-direct product of an 
algebraic group $H$ with a vector group 
associated to a rational $H$-module $W$.    
We consider image and kernel
bundles for (non-rational) representations 
of $\fg_{W,H} = \Lie(W \rtimes H)$.
Many of the examples of our recent paper 
\cite{CFP2} are reinterpreted and extended 
using this construction.  As we show in Theorem 
\ref{thm:H} and its corollary,
most homogeneous bundles on $H$-orbits inside 
$\Grass(r,W) \subset \bE(r,\fg_{W,H})$
are realized as image bundles in this manner.

Throughout, $k$ is an algebraically closed field 
of characteristic $p>0$.
All Lie algebras $\fg$ considered in this paper 
are assumed to be finite dimensional
over $k$ and $p$-restricted; all Lie subalgebras 
$\fh \subset \fg$ will be
assumed to be closed under $p$-restriction.  
We use the terminology ``affine algebraic group"
to refer to a reduced group scheme represented 
by a finitely generated, integral $k$-algebra
$k[G]$.  We use the terminology ``rational 
representation" of an affine group scheme $G$ to 
mean a comodule for the coalgebra $k[G]$; we 
shall sometimes refer to such rational representations
informally as $G$-modules.   Without explicit 
mention to the contrary, all $G$-modules are finite dimensional.

We thank Burt Totaro for providing a reference necessary 
for simplifying our geometric assumptions 
in Section \ref{sec:Gorbit} and thank George McNinch for helpful
discussions about separability of orbit maps.  
We are especially grateful to the referee for a careful reading of our paper.

%%%%%%%%%%%%%%%%%%%%%%%%%%%%%%%
%%%%%%%%%%section 1%%%%%%%%%%%%

\section{The coherent sheaves 
$\cIm^{j,X}(M), \ \cKer^{j,X}(M)$ on $\bE(r,\fg)$}
\label{sec:cohsheaves}

Let $\fg$ be a 
restricted Lie algebra of dimension $n$. We recall
from \cite[1.3]{CFP3} that an {\it elementary} subalgebra of $\fg$ 
is an abelian restricted Lie subalgebra with trivial $p$-restriction.  
For some $r, \ 0 < r < n$, we consider the 
Grassmann variety $\Grass(r,\fg)$ of 
$r$-planes in $\fg$ which is viewed as an $n$-dimensional 
vector space over $k$.   
The subset of $\Grass(r,\fg)$ consisting of 
those $r$-planes $\epsilon \subset \fg$ which 
are elementary subalgebras  
constitute a closed subvariety: \ 
$\bE(r,\fg) \ \subset \ \Grass(r,\fg).$ \
Let $\bM_{n,r}$ denote the affine space of 
$n \times r$ matrices over $k$ and let $\bM_{n,r}^\circ$ be the 
open subset consisting of those matrices
that have maximal rank. 

For each finite dimensional $\fu(\fg)$-module $M$
and each $j, \ 1 \leq j \leq (p-1)r$, we construct the coherent sheaves 
$\cIm^{j,X}(M), \ \cKer^{j,X}(M)$ on $\bE(r,\fg)$.
The reader should keep in mind that 
these image and kernel sheaves are not images 
and kernels of the action of $\fu(\fg)$,
but rather globalizations of images and kernels 
of  local (with respect to the Zariski 
topology on $\bE(r,\fg)$) actions on $M$.

Indeed, we provide two independent constructions.  
The first is by equivariant descent
for the $\GL_r$ torsor $\xymatrix@-0.6pt{\bM_{n,r}^\circ \ar[r]& \Grass(r,\fg)}$ 
(the Stiefel variety over $\Grass(r,\fg)$).
In the special case $r=1$ (so that $\bG_m$ replaces 
$\GL_r)$, this is implicit in the 
original construction of vector bundles for infinitesimal 
group schemes given in \cite{FP3}.
Each construction has
its advantages: that of equivariant descent is 
quickly seen to be well defined
independent of choices, that of patching leads 
to an identification of fibers.

We employ the natural action of $\GL_r$ on $\fg^{\oplus r}$ given by
$(a_{i,j}) \in \GL_r(k)$ acting as $(a_{i,j})\otimes \Id$ on $k^{\oplus r} \otimes \fg$.  This action induces 
an action of $\GL_r$ on $\fg^{\times r}$,
the affine variety associated to $\fg^{\oplus r}$ 
(isomorphic to the affine space $\bA^{nr}$).
We set $( \fg^{\times r})^o \subset \fg^{\times r}$ 
to be the open subvariety of those $r$-tuples of 
elements of $\fg$ that are linearly independent.  
We denote by \ $\cN_p(\fg) \subset \fg$ \
the closed subvariety of $\fg$ (viewed as affine 
$n$-space) consisting of those $X\in \fg$ with
$X^{[p]} = 0$.  We further denote by \ 
$\cC_r(\cN_p(\fg)) \ \subset \ (\cN_p(\fg))^{\times r}$ the
closed subvariety of $r$-tuples $(X_1,\ldots,X_r)$ 
which are pairwise commuting (as well as $p$-nilpotent),
and by $\cC_r(\cN_p(\fg))^o$ the subset of those 
$r$-tuples that are also linearly independent. 

We consider the following diagram of quasi-projective 
varieties over $k$ with  Cartesian (i.e., pull-back) squares.
We have 
 \begin{equation}
 \label{diag}
\xymatrix{ 
\cC_r(\cN_p(\fg)) \ar[r] & \fg^{\times r} \\
\cC_r(\cN_p(\fg))^o \ar[d] \ar[r] \ar[u] & (\fg^{\times r})^o \ar[d] \ar[u]\\
\bE(r,\fg) \ar[r] & \Grass(r,\fg)
 }\end{equation}
where upper vertical maps are open immersions,  
lower vertical maps are quotient maps 
by the $\GL_r$ actions, and  horizontal maps are closed immersions.

We choose a basis $\{ x_1,\ldots,x_n \}$ of $\fg$ which 
determines an identification $k[\mathbb M_{n,r}] \simeq 
k[\fg^{\times r}]$, where $\mathbb M_{n,r}$ is the vector 
group (isomorphic to $\bA^{nr}$) of $n\times r$ matrices.  
Under this identification, the matrix function $T_{i,s} 
\in k[\mathbb \bM_{n,r}]$ is sent to $x_i^{\#} \circ \pr_s:
\fg^{\times r} \to k$ defined as first projecting to the 
$s^{\rm th}$ factor and then applying the linear dual of $x_i$.
We set $Y_{i,s}\in k[\cC_r(\cN_p(\fg))]$  to 
be the image of the matrix function 
$T_{i,s}$ under the surjective map 
$$\xymatrix@=16pt{k[\bM_{n,r}] \ \simeq k[\fg^{\times r}] \ 
\ar@{->>}[r]& k[\cC_r(\cN_p(\fg))]}, \quad T_{i,s} \mapsto Y_{i,s}.
$$
For  any $s, \ 1 \leq s \leq r$, we define 
\begin{equation}
\label{thetass} 
\Theta_s \ \equiv \ \sum_{i=1}^n x_i \otimes Y_{i,s} \, 
\in \, \fg \otimes  k[\cC_r(\cN_p(\fg))]
\end{equation}
and use the same notation to denote the operator 
$$
\Theta_s: M \otimes k[\cC_r(\cN_p(\fg))] \ \to 
\ M \otimes k[\cC_r(\cN_p(\fg))]  ,
\quad 
\Theta_s(m \otimes f) = \sum\limits_{i=1}^n  x_im \otimes  Y_{i,s} f 
$$ 
for any finite dimensional $\fu(\fg)$-module $M$.
 
\begin{prop}
\label{prop:indept}
The operator $\Theta_s$ of (\ref{thetass}) does 
not depend upon the choice of basis of $\fg$.
\end{prop}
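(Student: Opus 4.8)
The plan is to recognize $\Theta_s$ as the restriction to $\cC_r(\cN_p(\fg))$ of a canonical, basis-free element of $\fg \otimes k[\fg^{\times r}]$, namely the one attached to the coordinate projection $\pr_s \colon \fg^{\oplus r} \to \fg$. Since the operator version of $\Theta_s$ on $M \otimes k[\cC_r(\cN_p(\fg))]$ is induced by the element $\Theta_s \in \fg \otimes k[\cC_r(\cN_p(\fg))]$, it suffices to prove basis-independence at the level of this element.

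First I would observe that the degree-one part of $k[\fg^{\times r}]$ is canonically the linear dual $(\fg^{\oplus r})^*$, so that $\fg \otimes (\fg^{\oplus r})^* \subset \fg \otimes k[\fg^{\times r}]$ is canonically identified with $\Hom_k(\fg^{\oplus r}, \fg)$, all without any choice of basis. Under this identification the projection $\pr_s$ corresponds to a well-defined element $\widetilde\Theta_s \in \fg \otimes k[\fg^{\times r}]$. I would then compute $\widetilde\Theta_s$ in coordinates: for a basis $\{x_1,\ldots,x_n\}$ with dual basis $\{x_1^{\#},\ldots,x_n^{\#}\}$ one has $\widetilde\Theta_s = \sum_i x_i \otimes (x_i^{\#}\circ \pr_s)$, since evaluating at an $r$-tuple $(X_1,\ldots,X_r)$ returns $\sum_i x_i^{\#}(X_s)\, x_i = X_s$. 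Because $x_i^{\#}\circ \pr_s$ is precisely the function to which $T_{i,s}$ is sent, the image of $\widetilde\Theta_s$ under the surjection $k[\fg^{\times r}] \twoheadrightarrow k[\cC_r(\cN_p(\fg))]$ is exactly $\sum_i x_i \otimes Y_{i,s} = \Theta_s$. As this surjection is the coordinate-free restriction-of-functions map and $\widetilde\Theta_s$ is manifestly basis-independent, so is $\Theta_s$.

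For readers who prefer an elementary verification in place of the tautological identification, the same conclusion follows from a direct change-of-basis computation: writing a second basis as $y_j = \sum_i c_{ij} x_i$, the dual bases are related by $y_j^{\#} = \sum_i (C^{-1})_{ji}\, x_i^{\#}$, and substituting into $\sum_j y_j \otimes (y_j^{\#}\circ \pr_s)$ collapses, via $\sum_j c_{ij}(C^{-1})_{jl} = \delta_{il}$, back to $\sum_i x_i \otimes (x_i^{\#}\circ \pr_s)$. I do not expect a genuine obstacle here, since the entire content is the basis-independence of the tautological projection; the one place warranting care is the bookkeeping of the change-of-basis matrix versus its inverse on the dual side, which is exactly what the identity above records.
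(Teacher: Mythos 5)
Your proposal is correct, and in fact contains the paper's own proof verbatim as its second paragraph: the paper argues exactly by writing $y_j = \sum_i a_{i,j}x_i$, noting that the $Z_{j,s}$ transform by the inverse matrix because they are images of the dual basis, and collapsing $\sum_j y_j\otimes Z_{j,s}$ to $\sum_i x_i\otimes Y_{i,s}$ via $(a_{i,j})(b_{i,j})=I$. What you add beyond the paper is the conceptual framing in your first paragraph: identifying $\Theta_s$ as the image, under the canonical restriction map $k[\fg^{\times r}]\twoheadrightarrow k[\cC_r(\cN_p(\fg))]$, of the tautological element of $\fg\otimes(\fg^{\oplus r})^{*}\simeq \Hom_k(\fg^{\oplus r},\fg)$ corresponding to $\pr_s$. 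This makes basis-independence literally tautological rather than a computation, and it has the side benefit of explaining \emph{why} the coordinate expression $\sum_i x_i\otimes Y_{i,s}$ is the ``right'' object (it is the universal point of $\fg^{\times r}$ projected to its $s$-th factor), which is conceptually useful later when the same operators are restricted to affine charts. The paper's direct computation buys brevity and avoids having to set up the identification of the degree-one part of the coordinate ring with the linear dual; your version buys transparency. Both are complete and correct.
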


\begin{proof}
Let $\{ y_1,\ldots,y_n \}$ be another choice of basis of $\fg$, 
and set $Z_{i,s}$ equal to the image of 
$T_{i,s}$ under the surjective map 
$k[\bM_{n,r}] \to k[\cC_r(\cN_p(\fg))]$ determined by this choice.  
Let $(a_{i,j}) \in \GL_n(k)$ be the change of basis matrix, 
so that $y_j = \sum_i a_{i,j} x_i$.   
Since ${Y_{i,s}}$'s are the images of the linear duals 
to ${x_i}$'s under the projection $k[\bM_{n,r}] \to k[\cC_r(\cN_p(\fg))]$ 
(and similarly for $Z_{i,s}$),  we conclude that
$Z_{j,s} = \sum_i b_{j,i}Y_{i,s}$ where $(b_{i,j}) = (a_{i,j})^{-1}$.  
To prove the proposition, it suffices to 
observe that 
$$
\sum_j y_j \otimes Z_{j,s} \ \equiv \ \sum_j(\sum_i a_{i,j}x_i) 
\otimes (\sum_i b_{j,i} Y_{i,s}) \ 
= \ \sum_i x_i\otimes Y_{i,s}.
$$
This follows directly from the fact that $(a_{i,j}) \cdot (b_{i,j})$ 
is equal to the identity matrix.
\end{proof}

Let $j: X \subset \bE(r,\fg)$ be a locally closed embedding, and denote by 
$\wt X \to X$ the restriction of the $\GL_r$-torsor 
$\cC_r(\cN_p(\fg))^o \to \bE(r,\fg)$ to $X$ 
so that there is a Cartesian square 
\begin{equation}
\label{eq:wtX}
\xymatrix{\wt X \ar@{^(->}[r]^-{\wt j} \ar[d] & \cC_r(\cN_p(\fg))^o \ar[d]\\
X \ar@{^(->}[r]^-j & \bE(r,\fg). } 
\end{equation}
We specialize (\ref{thetass}) by defining 
\begin{equation}
\label{thetaX}
\Theta_s^{\wt X}: M \otimes \cO_{\wt X} \ \to \ M \otimes \cO_{\wt X} ,
\quad 
\Theta_s^{\wt X}(m \otimes f) = \sum\limits_{i=1}^n  x_im \otimes  \wt j^*(Y_{i,s}) f.
\end{equation}

\begin{defn}
\label{defn:descent} 
For any finite-dimensional $\fu(\fg)$-module $M$, 
and any $j$, with $1 \leq j \leq (p-1)r$, 
we define the following submodules of 
$M \otimes k[\cC_r(\cN_p(\fg))]$: 
\[\Im\{\Theta^j,M\} = \Im \{\sum\limits_{\sum j_\ell=j}
\Theta_1^{j_1}\cdots \Theta_r^{j_r}: 
(M \otimes k[\cC_r(\cN_p(\fg))])^{\oplus r(j)} 
\to M\otimes k[\cC_r(\cN_p(\fg))] \}, 
\] 
\[
\Ker\{\Theta^j,M\} = \Ker \{[\Theta_1^{j_1}\cdots \Theta_r^{j_r}]_{\sum j_\ell=j}: 
M \otimes k[\cC_r(\cN_p(\fg))] \to 
(M \otimes k[\cC_r(\cN_p(\fg))])^{\oplus r(j)} \},
\] where $r(j)$ is the number of ways 
to write $j$ as a sum of $r$ nonnegative integers. 

For any locally closed subset $X \subset \bE(r, \fg)$, 
we define  the following coherent sheaves on $\wt X$:
\[
\Im\{\Theta^{j,\wt X},M\} = \Im \{\sum\limits_{\Sigma j_\ell=j}
(\Theta_1^{\wt X})^{j_1}\cdots (\Theta_r^{\wt X})^{j_r}: 
(M \otimes  \cO_{\wt X})^{\oplus r(j)} \to M \otimes  \cO_{\wt X}  \}, 
\]
\[
\Ker\{\Theta^{j,\wt X},M\} = \Ker 
\{[(\Theta_1^{\wt X})^{j_1}\cdots (\Theta_r^{\wt X})^{j_r}]_{\Sigma j_\ell=j}: 
M \otimes \cO_{\wt X} \to (M \otimes \cO_{\wt X}^{\oplus r(j)} \}. 
\]
\end{defn}

\begin{remark}
\label{rem:indept}
By Proposition \ref{prop:indept}, $\Im\{\Theta^{j,\wt X},M\}, \ 
\Ker\{\Theta^{j,\wt X},M\} $ do not depend upon 
our choice of basis for $\fg$.
\end{remark}

Let $G$ be an affine algebraic group (such as $\GL_r$)
 and $X$ an algebraic variety on which $G$ acts.  A quasi-coherent sheaf $\cF$ 
 of $\cO_X$-modules is said to be $G$-equivariant 
if there is an algebraic (i.e., functorial with respect to
base change from $k$ to any finitely generated commutative $k$-algebra $R$) 
action of $G$ on $\cF$ compatible with the action of $G$ on $X$: for all open
subsets $U \subset X$ and every $h, g \in G(R)$, an $\cO_X(U_R)$-isomorphism 
${}^g(-): \cF(U_R) \to \cF(U^{g^{-1}}_R)$ such that 
${}^h(-) \circ {}^g(-) = {}^{gh}(-)$.  
This is equivalent to the following data: an isomorphism $\theta: \mu^*\cF \
\stackrel{\sim}{\to} \ p^*\cF$ 
(where $\mu, \ p: G \times X \to X$ are the action and projection maps)
 together with a cocycle condition on the 
pull-backs of $\theta$ to $G \times G \times X$ 
insuring that  ${}^h(-) \circ {}^g(-) = {}^{gh}(-)$.

The argument of \cite[Lemma 6.7]{CFP2} applies  without change to show the following: 

\begin{lemma}
\label{le:equiv} Let  $M$ be a $\fu(\fg)$-module. 
For any locally closed subset $X \subset \bE(r, \fg)$,  
$\Ker\{\Theta^{j, \wt X},M\}$, $\Im\{\Theta^{j, \wt X},M\}$ 
are $\GL_r$-invariant $\cO_{\wt X}$-submodules of $M \otimes \cO_{\wt X}$.
\end{lemma}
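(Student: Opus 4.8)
The plan is to isolate the $\GL_r$-invariance as the only substantive point, since the assertion that these are $\cO_{\wt X}$-submodules is formal. Indeed, each $\Theta_s^{\wt X}$ of (\ref{thetaX}) acts on $M$ only through the $x_i$ and multiplies the structure-sheaf factor by $\wt j^*(Y_{i,s})$, so it is $\cO_{\wt X}$-linear; hence every composite $(\Theta_1^{\wt X})^{j_1}\cdots(\Theta_r^{\wt X})^{j_r}$ is $\cO_{\wt X}$-linear, and the image and kernel of an $\cO_{\wt X}$-linear map are $\cO_{\wt X}$-submodules of $M\otimes\cO_{\wt X}$. This disposes of the module-theoretic half and leaves the equivariance as the real content.

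The key computation is the transformation law for $\Theta_s$. With the $\GL_r$-action on $\fg^{\times r}$ described before (\ref{diag}), a group element $g=(a_{s,t})$ sends the tuple's $s$-th slot to the combination $\sum_t a_{s,t}(-)$, so on functions the dual coordinates transform by ${}^gY_{i,s}=\sum_t b_{s,t}\,Y_{i,t}$ with $(b_{s,t})=g^{-1}$. Substituting into (\ref{thetass}) yields ${}^g\Theta_s=\sum_t b_{s,t}\,\Theta_t$. Viewing ${}^g(-)$ as the equivariant-structure automorphism of $M\otimes\cO_{\wt X}$ (trivial on the $M$-factor) and restricting along $\wt j$, this becomes the conjugation identity
\[
{}^g\circ\Theta_s^{\wt X}\circ {}^{g^{-1}}\;=\;\sum_t b_{s,t}\,\Theta_t^{\wt X},\qquad (b_{s,t})=g^{-1}.
\]
I would verify this first on $\cC_r(\cN_p(\fg))^o$, where it is just the statement that ${}^g$ is a ring homomorphism on functions, and then pull back along the $\GL_r$-equivariant immersion $\wt j$.

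Next I would use that the $\Theta_s^{\wt X}$ pairwise commute: the defining equations of $\cC_r(\cN_p(\fg))$ force the universal elements $\sum_i x_i\otimes Y_{i,s}$ to commute pairwise in $\fg\otimes k[\cC_r(\cN_p(\fg))]$, so the associated operators commute, and this persists after pullback. Consequently the products of a fixed total degree $j$ are honest unordered monomials, and the conjugation identity extends multiplicatively: conjugating $(\Theta_1^{\wt X})^{j_1}\cdots(\Theta_r^{\wt X})^{j_r}$ by ${}^g$ expands, using commutativity, into a linear combination (with matrix coefficients from $g^{-1}$) of the monomials $(\Theta_1^{\wt X})^{k_1}\cdots(\Theta_r^{\wt X})^{k_r}$ with $\sum_\ell k_\ell=j$. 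Thus the span of the degree-$j$ monomials is a $\GL_r$-stable space, carrying a symmetric-power representation of $\GL_r$.

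Finally I would read off the invariance from this stability. For the image, a local section is a finite sum $\sum_{\vec j}(\Theta^{\wt X})^{\vec j}(\tau_{\vec j})$; applying ${}^g$ and rewriting each term via the multiplicative conjugation identity gives another sum of degree-$j$ monomials applied to the sections ${}^g\tau_{\vec j}$, hence a section of $\Im\{\Theta^{j,\wt X},M\}$. For the kernel, if $(\Theta^{\wt X})^{\vec k}\sigma=0$ for all $|\vec k|=j$, then for each $\vec j$ one has $(\Theta^{\wt X})^{\vec j}({}^g\sigma)={}^g\big(\sum_{\vec k}c_{\vec j,\vec k}\,(\Theta^{\wt X})^{\vec k}\sigma\big)={}^g(0)=0$, invoking the conjugation identity for $g^{-1}$; so ${}^g\sigma$ again lies in the kernel. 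The step I expect to require the most care — and the main obstacle — is the bookkeeping of the $g$-versus-$g^{-1}$ matrices together with checking that these identities hold functorially in the test algebra $R$, as the definition of an algebraic $\GL_r$-action demands. Once the transformation law ${}^g\Theta_s=\sum_t b_{s,t}\,\Theta_t$ is established $R$-functorially, the remainder is formal, exactly as in \cite[Lemma 6.7]{CFP2}.
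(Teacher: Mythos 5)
Your proof is correct and takes essentially the same route as the paper, which simply states that the argument of \cite[Lemma 6.7]{CFP2} applies without change: the $\Theta_s$ transform into linear combinations of one another under $\GL_r$ and pairwise commute because of the defining equations of $\cC_r(\cN_p(\fg))$, so the span of the degree-$j$ monomials is $\GL_r$-stable and the image and kernel submodules are invariant. Your explicit verification of the transformation law, the commutativity, and the $R$-functoriality is precisely the content the paper delegates to that citation.
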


The relevance of Lemma~\ref{le:equiv} to our 
consideration of coherent sheaves on $\bE(r,\fg)$
becomes evident in view of the following categorical equivalence.

\begin{prop}
\label{prop:categ}
There is a natural equivalence of categories 
\begin{equation}
\label{eq:equiv1} 
\xymatrix{\eta: \Coh^{\GL_r}(\cC_r(\cN_p(\fg))^o) 
\ar[r]^-{\sim}& \,\Coh(\bE(r,\fg))
}
\end{equation}
between the category of $\GL_r$-equivariant 
coherent sheaves on $\cC_r(\cN_p(\fg))^o$ and 
the category of coherent sheaves on $\bE(r,\fg)$. 

Moreover, (\ref{eq:equiv1}) restricts to 
an equivalence of categories
\begin{equation}
\label{eq:equiv2} 
\xymatrix{\eta_X: \Coh^{\GL_r}(\wt X) \ar[r]^-{\sim}& \,\Coh(X)}
\end{equation}
for any locally closed subset $X \in \bE(r, \fg)$ 
and $\wt X \to X$ as in \eqref{eq:wtX}. 
\end{prop}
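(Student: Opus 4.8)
The plan is to recognize the assertion as an instance of faithfully flat descent along the $\GL_r$-torsor $\pi\colon \cC_r(\cN_p(\fg))^o \to \bE(r,\fg)$ recorded in diagram \eqref{diag}. Write $P = \cC_r(\cN_p(\fg))^o$ and $S = \bE(r,\fg)$. Since $\GL_r$ is a smooth affine group scheme, $\pi$ is a smooth surjection of finite type between Noetherian varieties, hence faithfully flat and quasi-compact; moreover, because $\GL_r$ is a \emph{special} group in the sense of Serre, $\pi$ is even Zariski-locally trivial, which renders the descent below completely concrete. I would first prove, for an arbitrary $\GL_r$-torsor $\pi\colon P \to S$, an equivalence $\eta\colon \Coh^{\GL_r}(P) \xrightarrow{\sim} \Coh(S)$ quasi-inverse to $\pi^*$, and then apply this general principle twice: once to $\pi$ itself and once to its restriction $\wt X \to X$.

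The bridge between equivariance and descent data is the canonical torsor isomorphism $a\colon \GL_r \times P \xrightarrow{\sim} P \times_S P$, $(g,x) \mapsto (gx,x)$. Under $a$ the first projection $p_1\colon P\times_S P \to P$ corresponds to the action map $\mu$ and the second projection $p_2$ to the projection $p$ onto $P$, so that $p_1\circ a = \mu$ and $p_2 \circ a = p$. Consequently a descent datum on a sheaf $\cG$ on $P$ --- an isomorphism $p_1^*\cG \xrightarrow{\sim} p_2^*\cG$ on $P \times_S P$ satisfying the cocycle condition on the triple fiber product $P \times_S P \times_S P$ --- translates into an isomorphism $\theta\colon \mu^*\cG \xrightarrow{\sim} p^*\cG$ on $\GL_r \times P$ satisfying the cocycle condition on $\GL_r \times \GL_r \times P$, i.e., into precisely the $\GL_r$-equivariant structure described before Lemma~\ref{le:equiv}. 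Checking that this dictionary is functorial and that the two cocycle conditions match under $a$ is the step demanding the most care; it is routine but must be carried out.

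Granting this identification, Grothendieck's theorem on faithfully flat descent for quasi-coherent sheaves shows that $\pi^*$ is an equivalence from quasi-coherent $\cO_S$-modules onto quasi-coherent $\cO_P$-modules equipped with descent data, hence onto $\GL_r$-equivariant quasi-coherent sheaves on $P$; the quasi-inverse $\eta$ sends $\cG$ to its descended sheaf $(\pi_*\cG)^{\GL_r}$. Since finite type and finite presentation descend along faithfully flat quasi-compact morphisms, this equivalence carries coherent sheaves to coherent sheaves in both directions, giving \eqref{eq:equiv1}. Alternatively, using Zariski-local triviality one may trivialize $\pi$ over an open cover $\{U_i\}$ of $S$; over each $U_i$ a $\GL_r$-equivariant coherent sheaf on $U_i \times \GL_r$ is canonically the pullback of its restriction to $U_i \times \{e\}$, and these descended pieces glue via the cocycle condition.

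Finally, for \eqref{eq:equiv2} I would note that the square \eqref{eq:wtX} is Cartesian, so $\rho\colon \wt X \to X$ is the base change of $\pi$ along the locally closed immersion $j\colon X \hookrightarrow \bE(r,\fg)$ and is therefore again a $\GL_r$-torsor; the equivalence just proved applies to $\rho$ and produces $\eta_X$. That $\eta_X$ is the restriction of $\eta$ is then formal: the pullback $\wt j^*$ preserves equivariance since $\wt j$ is $\GL_r$-equivariant, and because \eqref{eq:wtX} is Cartesian and pullback of sheaves is functorial, $\wt j^*\circ \pi^* \cong \rho^* \circ j^*$; passing to the quasi-inverses $\eta, \eta_X$ of $\pi^*, \rho^*$ yields $j^*\circ\eta \cong \eta_X \circ \wt j^*$, which is exactly the statement that \eqref{eq:equiv1} restricts to \eqref{eq:equiv2}.
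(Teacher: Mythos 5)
Your proposal is correct and follows essentially the same route as the paper: the paper's proof consists precisely of the observation that $\cC_r(\cN_p(\fg))^o \to \bE(r,\fg)$ is a $\GL_r$-torsor together with a citation to \cite[6.5]{CFP2}, and what you have written out --- translating the equivariant structure into descent data via $\GL_r \times P \simeq P \times_S P$ and invoking faithfully flat descent, with the restricted statement obtained by base change along the locally closed immersion --- is exactly the argument that citation encapsulates. Your additional remarks (Zariski-local triviality since $\GL_r$ is special, which the paper later exploits via the explicit sections $s_\Sigma$, and the formal compatibility $j^*\circ\eta \cong \eta_X\circ \wt j^*$) are accurate and fill in details the paper leaves implicit.
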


\begin{proof}
This follows from the observation that 
$\cC_r(\cN_p(\fg))^o \to \bE(r,\fg)$ is a $\GL_r$-torsor.  
See, for example, \cite[6.5]{CFP2}.
\end{proof}

Proposition \ref{prop:categ} immediately gives our construction of
image and kernel sheaves.

\begin{thm}
\label{thm:equiv3}
Let $M$ be a finite dimensional $\fu(\fg)$-module, 
let $X \subset \bE(r,\fg)$ be a locally
closed subvariety, and let $j$ be a positive integer with $j \leq (p-1)r$.  Then 
the $\GL_r$-invariant $\cO_{\wt X}$-submodules of $M \otimes \cO_{\wt X}$
of Lemma \ref{le:equiv}, 
$$Im\{\Theta^{j, \wt X},M\},\quad \Ker\{\Theta^{j, \wt X},M\}$$ 
determine coherent subsheaves of $M \otimes \cO_X$  on $X$:
$$\cIm^{j,X}(M),\quad \cKer^{j,X}(M)$$
via the categorical equivalence $\eta_X$ of \eqref{eq:equiv2}.
\end{thm}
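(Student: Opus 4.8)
The plan is to leverage the categorical equivalence already established in Proposition~\ref{prop:categ} and the structural facts provided by Lemma~\ref{le:equiv}, reducing the theorem to an essentially formal transfer of data. First I would observe that the content of the theorem is almost entirely packaged in the preceding results: Lemma~\ref{le:equiv} tells us that $\Im\{\Theta^{j,\wt X},M\}$ and $\Ker\{\Theta^{j,\wt X},M\}$ are $\GL_r$-invariant $\cO_{\wt X}$-submodules of $M \otimes \cO_{\wt X}$, and these are coherent since $M$ is finite dimensional and $\wt X$ is noetherian. Thus each is an object of $\Coh^{\GL_r}(\wt X)$. Applying the equivalence $\eta_X$ of \eqref{eq:equiv2} then produces coherent sheaves on $X$, which I would name $\cIm^{j,X}(M)$ and $\cKer^{j,X}(M)$ respectively.

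The one point requiring genuine argument, rather than pure invocation, is the claim that these are \emph{subsheaves} of $M \otimes \cO_X$, not merely coherent sheaves on $X$ in the abstract. To see this I would note that $M \otimes \cO_{\wt X}$ is itself a $\GL_r$-equivariant coherent sheaf on $\wt X$ (the equivariant structure coming from the trivial action on the fibers $M$ descended against the torsor structure), and that under $\eta_X$ it corresponds precisely to $M \otimes \cO_X$ on $X$. Since $\Im\{\Theta^{j,\wt X},M\}$ and $\Ker\{\Theta^{j,\wt X},M\}$ are equivariant subsheaves of $M \otimes \cO_{\wt X}$, and since $\eta_X$ is an equivalence of categories it is in particular exact and hence carries the inclusion of an equivariant subsheaf to an inclusion of subsheaves; this gives $\cIm^{j,X}(M) \hookrightarrow M \otimes \cO_X$ and $\cKer^{j,X}(M) \hookrightarrow M \otimes \cO_X$.

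The main obstacle I anticipate is verifying that $\eta_X$ is exact, or at least that it preserves monomorphisms of equivariant sheaves, so that the word ``subsheaf'' is justified. This is where I would lean on the fact that the equivalence is realized by descent along a $\GL_r$-torsor: descent along a faithfully flat morphism is exact, and the inverse functor (pullback along $\wt X \to X$) is flat, so both directions preserve exact sequences and in particular injections. Granting this, the theorem is immediate. I would close by remarking that functoriality in $M$ is likewise inherited: a morphism $M \to M'$ of $\fu(\fg)$-modules induces a $\GL_r$-equivariant morphism of the associated image and kernel subsheaves on $\wt X$, and $\eta_X$ transports this to a morphism on $X$, so the assignments $M \mapsto \cIm^{j,X}(M)$ and $M \mapsto \cKer^{j,X}(M)$ define functors, as asserted in the introduction.
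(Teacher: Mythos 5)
Your proposal is correct and follows the same route as the paper, which gives no separate argument at all: it simply observes that Lemma \ref{le:equiv} together with the equivalence $\eta_X$ of Proposition \ref{prop:categ} immediately yields the coherent subsheaves. Your extra care in checking that $\eta_X$ carries equivariant subsheaves of $M \otimes \cO_{\wt X}$ to subsheaves of $M \otimes \cO_X$ (via exactness of faithfully flat descent along the $\GL_r$-torsor) is a correct filling-in of a point the paper leaves implicit.
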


Let $G$ be an affine algebraic group over $k$.  
Then the structure of a rational $G$-module on a 
$k$-vector space $M$ is the data of a functorial 
action of $G(R)$ on $M\otimes R$ for all 
finitely generated commutative $k$-algebras $R$.    
This readily implies that if $G$ acts on 
an algebraic variety $X$ and if $M$ is a 
finite dimensional rational $G$-module, then $M \otimes \cO_X$ is a 
$G$-equivariant coherent $O_X$-module with $G$ acting diagonally on the tensor product.

If $\fg = \Lie(G)$ is the Lie algebra of an 
affine algebraic group, then $G$ acts on 
$\fg^{\times r}$ by the diagonal adjoint action 
and this action commutes with that 
of $\GL_r$.   This observation leads to the 
following refinement of Theorem \ref{thm:equiv3}.

\begin{cor}
\label{cor:equiv}
Let $G$ be an affine algebraic group, 
$\fg = \Lie(G)$, $X \subset \bE(r,\fg)$ a $G$-stable locally
closed subvariety, and $M$ a  rational $G$-module.  
Then  $\cIm^{j,X}(M)$ and $ \cKer^{j,X}(M)$
are $G$-equivariant sheaves on $X$ for any $j$ with $1 \leq j \leq (p-1)r$.
\end{cor}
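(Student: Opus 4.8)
The plan is to upgrade the conclusion of Theorem~\ref{thm:equiv3} by tracking an additional $G$-action through the constructions already in place. The key point is that when $\fg = \Lie(G)$, the variety $\cC_r(\cN_p(\fg))^o$ carries \emph{two} commuting group actions: the $\GL_r$-action used for descent in Proposition~\ref{prop:categ}, and the diagonal adjoint action of $G$ on $\fg^{\times r}$ (restricted to the locus of pairwise-commuting $p$-nilpotent $r$-tuples). First I would record that the adjoint action preserves each of the subvarieties in diagram~(\ref{diag}): since $\Ad(g)$ is a Lie algebra automorphism commuting with $(-)^{[p]}$, it carries $\cN_p(\fg)$ to itself, preserves commuting tuples, and preserves linear independence, so $G$ acts on $\cC_r(\cN_p(\fg))^o$ and on $\bE(r,\fg)$ compatibly with the torsor projection. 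Restricting to the $G$-stable $X$, one gets a $G$-action on $\wt X$ commuting with the $\GL_r$-action, so that $\eta_X$ of~(\ref{eq:equiv2}) upgrades to an equivalence between $(\GL_r\times G)$-equivariant sheaves on $\wt X$ and $G$-equivariant sheaves on $X$.

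Next I would check that $M \otimes \cO_{\wt X}$ is $(\GL_r \times G)$-equivariant and that the operators $\Theta_s^{\wt X}$ are $G$-equivariant. The sheaf $M \otimes \cO_{\wt X}$ carries the diagonal $G$-action (rational $G$-action on $M$, adjoint action on $\cO_{\wt X}$) as explained in the paragraph preceding the corollary, and the trivial $\GL_r$-action on $M$ tensored with the torsor action on $\cO_{\wt X}$; these commute. The essential compatibility is that the operator $\Theta = \sum_i x_i \otimes Y_{i,s}$ is \emph{canonical}, i.e.\ basis-independent by Proposition~\ref{prop:indept}: transporting by $g \in G$ sends the basis $\{x_i\}$ to $\{\Ad(g)x_i\}$ and simultaneously transforms the dual coordinates $Y_{i,s}$ by the contragredient, and by the same linear-algebra identity used in the proof of Proposition~\ref{prop:indept} the sum $\sum_i x_i \otimes Y_{i,s}$ is invariant. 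Hence $\Theta_s^{\wt X}$ intertwines the $G$-action on source and target (and visibly commutes with the $\GL_r$-action, as already noted in Lemma~\ref{le:equiv}).

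Given this, the monomials $(\Theta_1^{\wt X})^{j_1}\cdots(\Theta_r^{\wt X})^{j_r}$ and the combined maps in Definition~\ref{defn:descent} are morphisms of $(\GL_r\times G)$-equivariant sheaves, so their image and kernel are $(\GL_r \times G)$-equivariant subsheaves of $M \otimes \cO_{\wt X}$. Applying the upgraded equivalence $\eta_X$ then yields $G$-equivariant coherent sheaves $\cIm^{j,X}(M)$ and $\cKer^{j,X}(M)$ on $X$, which is exactly the claim. In short, the proof is a matter of observing that every ingredient in the proof of Theorem~\ref{thm:equiv3} is natural in $\fg$ and hence carries the extra $\Ad(G)$-symmetry.

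I expect the only genuinely delicate step to be verifying the $G$-equivariance of $\Theta_s^{\wt X}$ precisely, i.e.\ confirming that the adjoint action on $\fg$ and the induced action on the coordinate ring $k[\cC_r(\cN_p(\fg))]$ (hence on $\wt j^*(Y_{i,s})$) conspire to leave $\Theta_s$ fixed. This is morally the content of Proposition~\ref{prop:indept}, but one must be careful that the two $G$-actions—on $M$ via the rational structure and on $\cO_{\wt X}$ via $\Ad$—are the ones that make $\Theta_s^{\wt X}$ a map of $G$-sheaves rather than merely a $G$-semilinear operator. Everything else (stability of the subvarieties, commutation of the two group actions, exactness of image/kernel formation under an equivalence of equivariant categories) is formal once the actions are set up correctly.
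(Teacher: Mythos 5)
Your proposal is correct and follows essentially the same route as the paper: the diagonal adjoint action of $G$ on $\cC_r(\cN_p(\fg))^o$ restricts to $\wt X$, commutes with the $\GL_r$-action, makes the operators $\Theta_s^{\wt X}$ maps of $G$-equivariant sheaves (so their images and kernels are $G$-equivariant), and descent along $\eta_X$ then produces $G$-equivariant sheaves on $X$. The only difference is that you spell out the equivariance of $\Theta_s^{\wt X}$ via the change-of-basis computation of Proposition~\ref{prop:indept}, a detail the paper's proof asserts without elaboration; your identification of that as the one genuinely delicate point is accurate.
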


\begin{proof}
The diagonal action of $G$ on $\fg^{\times r}$ 
determines an action of $G$ on $\cC_r(\cN_p(\fg))^o$
and thus on $\wt X \subset \cC_r(\cN_p(\fg))^o$ 
over the $G$-stable subvariety $X \subset \bE(r,\fg)$.  
We thus may consider the category of 
$G$-equivariant coherent sheaves on $\wt X$.
If $M$ is a rational $G$-module, then the 
maps (\ref{thetaX}) are maps of $G$-equivariant
coherent sheaves on $\wt X$; consequently, 
$\Im\{ \Theta^{j,\tilde X},M \}, \ \Ker\{ \Theta^{j,\tilde X},M \}$ of 
Definition \ref{defn:descent} are 
$G$-equivariant coherent sheaves on $\wt X$.
Since the action of $\GL_r$ on  these 
$G$-equivariant coherent sheaves on $\wt X$
commutes with this action of $G$, the 
equivalence $\eta_X$ in (\ref{eq:equiv2}) 
(given by descent) sends these
$G$-equivariant coherent sheaves on $\wt X$ 
to $G$-equivariant coherent sheaves on $X$.
\end{proof}

The special case of Corollary \ref{cor:equiv} 
in which $X$ is a $G$-orbit is of particular interest 
since any $G$-equivariant sheaf on such an $X$ 
is a $G$-equivariant vector bundle. 

\vskip .1in

We now proceed to identify these image and kernel
subsheaves of $\cO_X \otimes M$ when restricted
to open subsets $X \cap \cU_\Sigma$, where 
 $\{ \cU_\Sigma \}$ is a standard affine open 
covering of $\Grass(r,n)$ such that the 
$\GL_r$-torsor $\rho: \bM_{n,r}^\circ \to \Grass(r,n)$ 
splits over each $\cU_\Sigma$.  

Once again, we choose
a basis $x_1,\ldots,x_n$ for $\fg$ as a 
$k$-vector space, thereby identifying $\Grass(r,\fg)$ with
$\Grass(r,n)$.  Let $\Sigma \subset \{1,\ldots,n \}$ 
range over the subsets of cardinality $r$. For a given $\Sigma$, let 
$\rho^{-1}(\cU_\Sigma) \subset \bM_{n,r}^\circ$ be a subset 
of those $n\times r$ matrices whose $r\times r$-minor
with rows indexed by elements 
of $\Sigma$ has non-vanishing determinant.  Thus,
$\cU_\Sigma \subset \Grass(r,n)$ consists 
of those $r$-planes in $n$-space which project 
onto $r$-space via the map which forgets 
the coordinates not indexed by elements of $\Sigma$.
We define the section 
$$
s_\Sigma: \cU_\Sigma \ \to \ \rho^{-1}(\cU_\Sigma)
$$
by sending an $r$-plane $L \in \cU_\Sigma$ to 
the unique $n\times r$-matrix $\wt L$ satisfying 
the conditions that $\rho(\wt L) = L$ and 
that the $r\times r$-minor of $\wt L$ with rows indexed
by elements of $\Sigma$ is the identity 
matrix $I_r$.  In particular, $\rho^{-1}(\cU_\Sigma) \to \cU_\Sigma$
is a trivial $\GL_r$-torsor for any $\Sigma$.

\begin{notation}
\label{note3}
For $\Sigma = \{i_1, \dots, i_r\}$ with $i_1 < \dots < i_r$, $s_\Sigma$
provides an identification of $k[\cU_\Sigma]$ with the quotient
\begin{equation}
\label{eq:YSigma}
k[\bM_{n,r}] = k[T_{i,j}]_{1\leq i \leq n,  1 \leq j \leq r } \  
\longrightarrow \
k[Y_{i,j}^\Sigma]_{i \notin \Sigma, 1\leq j \leq r} = k[\cU_\Sigma]
\end{equation}
sending $T_{i,j}$  to 1, if $i = i_j  \in \Sigma$; 
to 0 if $i=i_{j^\prime} \in \Sigma$ and $j \not= j^\prime$; 
and to $Y_{i,j}^\Sigma$ otherwise.  
For notational convenience,  we set $Y_{i,j}^\Sigma$  equal to 1, if
$i\in \Sigma$ and $i = i_j$, and we set 
$Y_{i,j}^\Sigma = 0$ if $i=i_{j^\prime} \in \Sigma$ 
and $j \not= j^\prime$.

As in (\ref{thetass}), we define
\begin{equation} 
\label{eq:Theta}
\Theta_s^\Sigma \equiv \sum_{i=1}^n x_i  
\otimes Y_{i,s}^\Sigma: M \otimes k[\cU_\Sigma]  
\to M \otimes k[\cU_\Sigma], 
\end{equation} 
by
$$
m \otimes 1 \mapsto  \sum_i x_i(m) \otimes  Y^\Sigma_{i,s}. 
$$

For any closed subset $i: W \subset \Grass(r, \fg) \simeq \Grass(r,n)$,
set $W_\Sigma = W \cap \cU_\Sigma$ and 
$$Y_{i,j}^{W, \Sigma} \ = \ i^*(Y_{i,j}^\Sigma).$$ 
\end{notation}
We define
\begin{equation} 
\label{eq:ThetaW}
\Theta_s^{W,\Sigma} \equiv \sum_{i=1}^n x_i  
\otimes Y_{i,s}^{W,\Sigma}: M \otimes k[W_\Sigma]  
\to M \otimes k[W_\Sigma], 
\end{equation} 
by
$$
m \otimes 1 \mapsto  \sum_i x_i(m) \otimes  Y^{W,\Sigma}_{i,s}. 
$$

\begin{defn}
\label{defn:localj} 	Let $M$ be a finite
dimensional $\fu(\fg)$-module, let 
$W\subset \bE(r,\fg)$ be a closed subset, 
let $\Sigma \subset \{ 1, \ldots, n\}$ be a 
subset of cardinality $r$, and choose $j$ such that $1 < j \leq (p-1)r$.  
We define the following $k[W_\Sigma]$-submodules 
of the free module $M \otimes k[W_\Sigma]$: 
\[
\cKer^{j,W_\Sigma}(M) \  \equiv \ 
\Ker\{ \bigoplus\limits_{j_1+\cdots +j_r = j} 
(\Theta_1^{W,\Sigma})^{j_1}\ldots
(\Theta_r^{W,\Sigma})^{j_r}: M \otimes k[W_\Sigma]  
\to (M \otimes k[W_\Sigma])^{\oplus r(j)} \}
\]
\[
\cIm^{j,W_\Sigma}(M) \  \equiv \ 
\Im\{ \sum\limits_{j_1+\cdots +j_r = j} (\Theta_1^{W,\Sigma})^{j_1}\ldots
(\Theta_r^{W,\Sigma})^{j_r}: (M \otimes k[W_\Sigma])^{\oplus r(j)}   
\to M \otimes k[W_\Sigma]\}
\]
where $r(j)$ is the number of ways $j$ can be written 
as a sum of $r$ nonnegative integers,
$j = j_1 + \cdots + j_r$.

We identify these $k[W_\Sigma]$-submodules of $M\otimes k[W_\Sigma]$ with
coherent subsheaves of the free $\cO_W$-module $M \otimes \cO_W$ restricted
to the affine open subvariety $W_\Sigma \subset W$.
\end{defn} 

\begin{thm}
\label{thm:compare}
Let $M$ be a $\fu(\fg)$-module, $X  \subset 
\bE(r,\fg)$ be a locally closed subset, 
$W = \overline X$ be the closure of $X$, and $r,j$ be positive
integers with $j \leq (p-1)r$.   Then 
$$\cIm^{j,X}(M) \subset M \otimes \cO_X\quad \text{restricted to} \ X \cap \cU_\Sigma \subset X$$
equals 
$$\cIm^{j,W_\Sigma}(M) \subset  M  \otimes \cO_{W_\Sigma}  \quad  \text{restricted to} \
X \cap \cU_\Sigma \subset W_\Sigma.$$

The analogous identification of restrictions of $\cKer^{j,X}(M) \subset M \otimes  \cO_X$ 
are also valid.
\end{thm}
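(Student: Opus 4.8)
The plan is to exploit the fact that both families of sheaves arise from the same $\GL_r$-torsor $\cC_r(\cN_p(\fg))^o \to \bE(r,\fg)$: the descent construction produces them globally, while the patching construction produces them via an explicit local trivialization. The engine of the comparison is the section $s_\Sigma$. First I would check that, over $\bE(r,\fg) \cap \cU_\Sigma$, it is a section of the torsor $\cC_r(\cN_p(\fg))^o \to \bE(r,\fg)$ and not merely of $\bM_{n,r}^\circ \to \Grass(r,n)$. Indeed, for an elementary subalgebra $L \in \bE(r,\fg) \cap \cU_\Sigma$, the columns of the canonical representative $s_\Sigma(L) = \wt L$ form a basis of $L$; since $L$ is elementary they are pairwise commuting, $p$-nilpotent, and linearly independent, so $\wt L \in \cC_r(\cN_p(\fg))^o$. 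Thus $s_\Sigma$ restricts to a splitting of $\cC_r(\cN_p(\fg))^o \to \bE(r,\fg)$ over $\bE(r,\fg) \cap \cU_\Sigma$, and a fortiori of $\wt X \to X$ over $X \cap \cU_\Sigma$.

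Next I would record how the descent equivalence $\eta_X$ of Proposition \ref{prop:categ} is computed over a trivializing chart. For a $\GL_r$-equivariant sheaf $\cF$ on $\wt X$ and an open $V \subset X \cap \cU_\Sigma$, the splitting $s_\Sigma$ identifies $\pi^{-1}(V)$ with $V \times \GL_r$ and $\cF$ with $p^*(s_\Sigma^*\cF)$; taking $\GL_r$-invariants yields $\eta_X(\cF)|_{X \cap \cU_\Sigma} \cong s_\Sigma^*\cF$. In other words, over the chart $\cU_\Sigma$ the equivalence $\eta_X$ is simply pullback along $s_\Sigma$. Since $\eta_X$ is an additive equivalence of abelian categories, it is exact, hence carries kernels to kernels and images to images. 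Applying this to the $\GL_r$-equivariant operator $\Phi^j = \sum_{\sum j_\ell = j}(\Theta_1^{\wt X})^{j_1}\cdots(\Theta_r^{\wt X})^{j_r}$ of Definition \ref{defn:descent}, whose kernel and image are $\GL_r$-equivariant by Lemma \ref{le:equiv}, shows that $\cKer^{j,X}(M) = \eta_X(\Ker\{\Theta^{j,\wt X},M\})$ and $\cIm^{j,X}(M) = \eta_X(\Im\{\Theta^{j,\wt X},M\})$ are, respectively, the kernel and image of $\eta_X(\Phi^j)$.

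It then remains to identify $\eta_X(\Phi^j)$ over $X \cap \cU_\Sigma$ with the patching operator. Because $\eta_X = s_\Sigma^*$ there, I would compute $s_\Sigma^*\Theta_s^{\wt X}$ term by term. The coefficient functions $Y_{i,s}$ are the images of the matrix coordinates $T_{i,s}$, and by the very definition of $s_\Sigma$ in Notation \ref{note3} one has $s_\Sigma^*T_{i,s} = Y_{i,s}^\Sigma$; restricting to $W_\Sigma$ and using $Y_{i,s}^{W,\Sigma} = i^*(Y_{i,s}^\Sigma)$ gives $s_\Sigma^*Y_{i,s} = Y_{i,s}^{W,\Sigma}$. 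Since $s_\Sigma^*(M \otimes \cO_{\wt X}) = M \otimes \cO_{X \cap \cU_\Sigma}$ and the $x_i$-action is unchanged, this yields $s_\Sigma^*\Theta_s^{\wt X} = \Theta_s^{W,\Sigma}$ as operators on $M \otimes \cO_{X \cap \cU_\Sigma}$. Consequently $\eta_X(\Phi^j)|_{X \cap \cU_\Sigma}$ equals the map defining $\cKer^{j,W_\Sigma}(M)$ and $\cIm^{j,W_\Sigma}(M)$ restricted to $X \cap \cU_\Sigma$, and comparing kernels and images of equal maps finishes both assertions.

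The main obstacle I expect is the second paragraph: pinning down precisely that descent over the trivializing chart is pullback along $s_\Sigma$, together with the exactness needed to commute $\eta_X$ past the kernel. For images, right-exactness of pullback already suffices, but the kernel statement genuinely uses that $\eta_X$ is an equivalence of abelian categories. Everything else—verifying that $s_\Sigma$ lands in $\cC_r(\cN_p(\fg))^o$ and matching the coefficient functions—is bookkeeping, though one must keep careful track of the restriction from $W = \overline X$ to the open subset $X \cap \cU_\Sigma \subset W_\Sigma$ so that the two sheaves are genuinely compared on the same open set.
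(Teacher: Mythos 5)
Your proposal is correct and takes essentially the same approach as the paper: both reduce to the charts $\cU_\Sigma$ and rest on the single observation that $\Theta_s^\Sigma$ is the restriction of $\Theta_s$ along the section $s_\Sigma$. You merely supply details the paper leaves implicit (that $s_\Sigma$ lands in $\cC_r(\cN_p(\fg))^o$ over $\bE(r,\fg)\cap\cU_\Sigma$, that descent over a trivializing chart is $s_\Sigma^*$, and the exactness of $\eta_X$ needed to commute it past kernels and images).
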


\begin{proof}
It suffices to show that the asserted equalities of 
subsheaves of  $M \otimes \cO_X$ on $X$ are valid 
when restricted to each open chart $\cU_\Sigma \cap X$ 
of $X$ as $\Sigma$ runs through subsets 
of cardinality $r$ in $\{ 1, 2, \ldots ,n \}$.  
Moreover, it suffices to verify the equality of subsheaves of 
$M \otimes \cO_{\wt X}$ on $\wt X$ obtained by pulling back
these restrictions along the map 
$\bM_{n,r}^\circ \to \Grass(r,n)$.  These equalities are verified by comparing
the formulation of $\Theta_s$ in (\ref{thetass}) with that of 
$\Theta_s^\Sigma$ in (\ref{eq:Theta}); namely, $\Theta_s^\Sigma$
is the restriction along the section $s_\Sigma: \cU_\Sigma \to \rho^{-1}(\cU_\Sigma)$
of $\Theta_s$.
\end{proof}

The following  proposition identifies the 
``generic" fibers of the image and kernel sheaves.
  This is particularly 
useful when the locally closed subset  $X \subset \bE(r,\fg)$
is an orbit closure. Here and throughout the paper, we 
denote by $\epsilon^*M$ the restriction of a $\fg$-module $M$ 
to the subalgebra $\epsilon \subset \fg$.

\begin{prop}
\label{prop:fibers}
Let $M$ be a $\fu(\fg)$-module, $X  \subset 
\bE(r,\fg)$ be a locally closed subset, 
$W = \overline X$ be the closure of $X$, and $r,j$ be positive
integers with $j \leq (p-1)r$.  For any $\Sigma \subset \{ 1, \ldots, n\}$ 
of cardinality $r$ there exists an open dense subset 
$U \subset X \cap W_\Sigma$ such that for any point  $\epsilon  \in U$
with residue field $K$
there are natural identifications 
$$
\cIm^{j,X}(M)_\epsilon = \cIm^{j}(M)_{W_\Sigma}\otimes_{k[W_\Sigma]} 
K \ = \ \Rad^j(\epsilon^*(M_K)),
$$
$$
\cKer	^{j,X}(M)_\epsilon = 
\cKer^j(M)_{W_\Sigma}\otimes_{k[W_\Sigma]} K \ 
= \ \Soc^j(\epsilon^*(M_K)).
$$
\end{prop}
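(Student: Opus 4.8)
The plan is to reduce the global statement to the explicit local description of Definition \ref{defn:localj}, then to a pointwise computation inside the restricted enveloping algebra of a single elementary subalgebra, the only genuinely geometric ingredient being a generic local freeness argument that lets kernel formation commute with specialization. First I would invoke Theorem \ref{thm:compare} to replace $\cIm^{j,X}(M)$ and $\cKer^{j,X}(M)$, restricted to $X \cap \cU_\Sigma$, by the $k[W_\Sigma]$-submodules $\cIm^{j,W_\Sigma}(M)$, $\cKer^{j,W_\Sigma}(M)$ of $M \otimes k[W_\Sigma]$. These are the image and kernel of the two $k[W_\Sigma]$-linear maps
\[ \Phi_j = \sum_{j_1 + \cdots + j_r = j} (\Theta_1^{W,\Sigma})^{j_1}\cdots(\Theta_r^{W,\Sigma})^{j_r} \colon (M \otimes k[W_\Sigma])^{\oplus r(j)} \to M \otimes k[W_\Sigma], \]
\[ \Psi_j = \big[(\Theta_1^{W,\Sigma})^{j_1}\cdots(\Theta_r^{W,\Sigma})^{j_r}\big]_{j_1+\cdots+j_r=j} \colon M \otimes k[W_\Sigma] \to (M \otimes k[W_\Sigma])^{\oplus r(j)}, \]
so it suffices to compute the fibers of $\Im \Phi_j$ and $\Ker \Psi_j$ at a generic $\epsilon$.

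For the pointwise computation I fix $\epsilon \in X \cap W_\Sigma$ with residue field $K$. The section $s_\Sigma$ of Notation \ref{note3} exhibits $\epsilon$ as the $r$-plane spanned by the commuting, $p$-nilpotent elements $u_s = \sum_i x_i\, Y^{W,\Sigma}_{i,s}(\epsilon)$, which form a basis of the elementary subalgebra $\epsilon$; hence $\fu(\epsilon) \cong K[t_1,\dots,t_r]/(t_1^p,\dots,t_r^p)$ is local with radical $J = (t_1,\dots,t_r)$, and $\Theta_s^{W,\Sigma}$ specializes to the action of $u_s$ on $M_K = M \otimes_k K$. Since $J^j$ is spanned by the monomials of degree $\geq j$ and, by commutativity, every such monomial factors as a degree-$j$ monomial times another, the specialized map $\Phi_{j,K}$ has image $J^j M_K = \Rad^j(\epsilon^*(M_K))$, while $\Ker \Psi_{j,K} = \Ann_{M_K}(J^j)$. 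The elementary identity $\Soc^j(N) = \Ann_N(J^j)$, proved by induction on $j$ from $\Soc^1(N) = \Ann_N(J)$, then identifies $\Ker \Psi_{j,K}$ with $\Soc^j(\epsilon^*(M_K))$.

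Finally I must promote these fiber computations to the statement by comparing $\Im \Phi_{j,K}$, $\Ker \Psi_{j,K}$ with the fibers of the global submodules. Formation of an image commutes with arbitrary base change, so $\cIm^{j,W_\Sigma}(M) \otimes_{k[W_\Sigma]} K$ always surjects onto $\Im \Phi_{j,K} = \Rad^j(\epsilon^*(M_K))$ inside $M_K$. Kernel formation, however, need not commute with specialization, and this is the main obstacle. I would resolve it by generic local freeness: over the reduced Noetherian scheme $X \cap W_\Sigma$ there is a dense open $U$ on which $\cIm^{j,W_\Sigma}(M)$, $\cKer^{j,W_\Sigma}(M)$, and the cokernels of $\Phi_j$ and $\Psi_j$ are all locally free — equivalently, on which the fiber ranks of $\Phi_{j,\epsilon}$ and $\Psi_{j,\epsilon}$ attain their generic (minimal) values, $\dim_K \Ker \Psi_{j,\epsilon}$ being upper semicontinuous. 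On $U$ both submodules are subbundles of $M \otimes \cO$, so their fibers inject into $M_K$ and coincide with $\Im \Phi_{j,K}$ and $\Ker \Psi_{j,K}$, yielding the asserted identifications with $\Rad^j(\epsilon^*(M_K))$ and $\Soc^j(\epsilon^*(M_K))$. Naturality in $M$ is automatic, as everything is built from the single family of operators $\Theta_s^{W,\Sigma}$.
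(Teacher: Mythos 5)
Your proposal is correct and follows essentially the same route as the paper: the paper likewise reduces to the local description of Definition \ref{defn:localj}, identifies the fibers at the generic point using exactness of localization, and then invokes generic flatness of $\cIm^{j}(M)_{W_\Sigma}$ and $\cKer^{j}(M)_{W_\Sigma}$ to spread the identification over a dense open subset. You simply make explicit the two steps the paper leaves implicit --- the computation of $J^jM_K$ and $\Ann_{M_K}(J^j)$ inside $\fu(\epsilon)$, and the need for local freeness of the relevant cokernels so that kernel formation commutes with specialization.
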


\begin{proof} Since $X$ is open dense 
in $W$, we may assume that $W=X$. 
For $\epsilon \in W_\Sigma$ a generic point, the given 
identifications are immediate consequences of
the exactness of localization and 
Definition  \ref{defn:localj}. 
The fact that these identifications 
apply to an open subset
now follows from the generic flatness 
of the $k[W_\Sigma]$-modules 
$\cIm^j(M)_{W_\Sigma}, \  \cKer^j(M)_{W_\Sigma}$.
\end{proof}

 \begin{remark}
For an elementary example of the failure of the isomorphism 
$\cKer^j(M)_\epsilon \ \simeq \ \Soc^j(\epsilon^*M)$
outside of an open subset of $W_{\Sigma}$, we consider
$\fg = \fg_a \oplus \fg_a,$ $r=1$, and 
$j=1$. Let $\{x_1, x_2\}$ be a fixed basis of $\fg$, and
let $M$ be the four dimensional module with basis $\{m_1, \dots, m_4\}$,
such that $x_1m_1 = m_4, \  x_1m_2 = x_1m_3 = x_1m_4 = 0$ and 
$x_2m_1 = m_3, \ x_2m_2 = m_4, \ x_2m_3 = x_2m_4 = 0$.    
We can picture $M$ as follows:
\[
\xymatrix@-.8pc{
m_2\ar[dr]^{x_2}&& m_1 \ar[dr]^{x_2}\ar[dl]^{x_1}&\\
&m_4&&m_3.
}
\]
The kernel of 
$$ x_1 \otimes 1 + x_2 \otimes T_2^{\{1\}}: M \otimes k[T_2^{\{1\}}] 
\to M  \otimes k[T_2^{\{1\}}]$$
(as in Definition \ref{defn:localj} ) with $j=1$
is a  free $k[T_2^{\{1\}}]$-module of rank 2, 
generated by $m_3 \otimes 1$ and $m_4 \otimes 1$.  
The specialization of this module at
the point $\epsilon = kx_1$ (letting
$T_2 \to 0$)  is a vector space of dimension 2. This is a 
proper subspace of $\Soc(\epsilon^*(M))$ which is spanned by
$m_2, m_3, m_4$. 
\end{remark}
 
\begin{remark} In this paper, we concentrate on the variety 
of elementary subalgebras $\bE(r, \fg)$ and  its $G$-orbits. 
Nevertheless, the formalism of the equivariant descent 
construction of the sheaves $\cIm^{X}(M)$, $\cKer^{X}(M)$ 
works equally well for any locally closed subvariety $X \subset \Grass(r, \fg)$, 
  since the commutativity condition 
that defines $\bE(r, \fg)$ does not enter into the construction 
of the image and kernel sheaves for $j=1$. Moreover, one can 
easily check that the identification of the image and kernel 
sheaves on the affine charts as in Theorem~\ref{thm:compare}  
and identification of the generic fibers as in Proposition~\ref{prop:fibers} 
remain valid.  
\end{remark}

%%%%%%%%%%%%%%  Section 2 %%%%%%%%%%%%%%%%%
%%%%%%%%%%%%%%%%%%%%%%%%%%%%

\section{Geometric invariants of $\fu(\fg)$-modules}
\label{sec:geominv}

In this section, we discuss various invariants 
of finite dimensional $\fu(\fg)$-modules $M$ 
which involve consideration of the projective 
varieties $\bE(r,\fg)$ of elementary subalgebras
of $\fg$.  We begin by recalling various closed 
subvarieties of $\bE(r,\fg)$ introduced in
\cite{FP3} associated to
$M$, before considering the image and kernel 
sheaves of Section \ref{sec:cohsheaves}.
As always in this paper, $\fg$ denotes a finite 
dimensional restricted Lie algebra over $k$. For a 
Lie subalgebra $\epsilon \subset \fg$, we denote by 
$\epsilon^*M$ the restriction of a $\fg$-module $M$ to $\epsilon$.  

\begin{defn} (\cite[3.2]{CFP3},\cite[3.15]{CFP3})
\label{defn:subvar}
Let $M$ be a finite dimensional $\fu(\fg)$-module, 
$r$ a positive integer, and $j$ an integer 
satisfying $1 \leq j \leq (p-1)r$.   We define the following closed
subvarieties of $\bE(r,\fg)$ associated to $M$.  Let 
\begin{equation}
\label{eq:supp}
\bE(r,\fg)_M \ \equiv \ \{ \epsilon \in \bE(r,\fg): 
\epsilon^*M \text{\ is not projective} \},
\end{equation}
\begin{equation}
\label{eq:rad}
\bRad^j(r,\fg)_M \ \equiv \ \{ \epsilon \in \bE(r,\fg): 
\dim(Rad^j(\epsilon^*M)) < \Max R_j\},
\end{equation}
and 
\begin{equation}
\label{eq:soc}
\bSoc^j(r,\fg)_M \ \equiv \ \{ \epsilon \in \bE(r,\fg):  
\dim(Soc^j(\epsilon^*M)) > \Min S_j\}, 
\end{equation} 
where $\Max R_j$ is the maximum value of 
$\dim(\Rad^j(\epsilon^{\prime *}M))$ and 
$\Min S_j$ is the minimum value of 
$\dim(\Soc^j(\epsilon^{\prime *}M))$, as $\epsilon^\prime$
ranges over all elements of $\bE(r,\fg)$.
\end{defn}

The closed subvariety $\bE(1,\fg)_M$  equals 
the (projectivized) support
(or, equivalently, rank) variety of $M$ considered 
by various authors (e.g., \cite{FPar1}).  If 
$\bE(1,\fg)_M \neq \bE(1,\fg)$, then 
$$
\bE(1,\fg)_M \ = \ \bRad(1,\fg)_M \ = \ \bSoc(1,\fg)_M.
$$
For $r=1$, these subvarieties were introduced in 
\cite{FP4}; for general $r,j$ they 
were defined in \cite[3.1]{CFP3}.  

The reader is directed to \cite[4.6]{CFP2} for 
an interesting example of a module $M$
for $\fu(\fg)$ with $\fg$ elementary for 
which $\bRad^1(2,\fg)_M \not= \emptyset, \ 
\bSoc^1(2,\fg)_M = \emptyset.$

The following theorem emphasizes the additional 
information given by the image and kernel
sheaves of Section \ref{sec:cohsheaves}.

\begin{thm}
\label{thm:bundle}
Let $M$ be a $\fu(\fg)$-module, $r$ and $j$ be positive 
integers, such that $j \leq (p-1)r$.
Set $Z= \bRad^j(r, \fg)_M$ (resp, $Z= \bSoc^j(r, \fg)_M$) and let
$X = \bE(r,\fg) \ \backslash \ Z$ denote 
the Zariski open subset of $\bE(r,\fg)$ given as 
the complement of $Z$. 

Then $\cIm^{j, X}(M) = \cIm^{j,\bE(r,\fg)}(M)_{|X}$ and 
$\cKer^{j, X}(M) = \cKer^{j,\bE(r,\fg)}(M)_{|X}$  
are algebraic vector bundles on $X$.

Moreover, the fiber of $\cIm^{j,X}(M)$ 
(reps., $\cKer^{j,X}(M)$) at any $\epsilon \in X$ is 
naturally identified with $\Rad^j(\epsilon^*M)$ 
(resp. $\Soc^j(\epsilon^*M)$). 
\end{thm}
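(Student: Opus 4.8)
The plan is to show that the coherent sheaves $\cIm^{j,X}(M)$ and $\cKer^{j,X}(M)$ are locally free on $X$ by reducing the question to the affine charts $X\cap\cU_\Sigma$ and exploiting the constancy of fiber dimension guaranteed by the definition of $Z$. First I would invoke Theorem~\ref{thm:compare} to replace $\cIm^{j,X}(M)$ and $\cKer^{j,X}(M)$ on each chart $X\cap\cU_\Sigma$ by the explicitly described $k[W_\Sigma]$-modules $\cIm^{j,W_\Sigma}(M)$ and $\cKer^{j,W_\Sigma}(M)$ of Definition~\ref{defn:localj}, restricted to $X\cap\cU_\Sigma$. Since local freeness is a local property and the charts $\{\cU_\Sigma\}$ cover $\Grass(r,n)$, it suffices to prove that these restricted modules are locally free on each $X\cap\cU_\Sigma$.

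Next I would analyze the map of free $\cO_X$-modules $M\otimes\cO_X \to (M\otimes\cO_X)^{\oplus r(j)}$ (resp.\ the map in the other direction) whose kernel (resp.\ image) defines $\cKer^{j,X}(M)$ (resp.\ $\cIm^{j,X}(M)$). The key point is that $X$ was defined precisely as the complement of the locus $Z$ where the relevant fiber dimension jumps: on $X = \bE(r,\fg)\setminus\bSoc^j(r,\fg)_M$ the dimension $\dim\Soc^j(\epsilon^*M)$ attains its \emph{minimum} value at every point, and on $X = \bE(r,\fg)\setminus\bRad^j(r,\fg)_M$ the dimension $\dim\Rad^j(\epsilon^*M)$ attains its \emph{maximum} value at every point. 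By Proposition~\ref{prop:fibers}, these fiber dimensions at a point $\epsilon\in X$ are exactly $\dim\Soc^j(\epsilon^*M)$ and $\dim\Rad^j(\epsilon^*M)$, so the fibers of $\cKer^{j,X}(M)$ and $\cIm^{j,X}(M)$ have constant dimension across all of $X$.

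I would then combine this constant-rank information with the standard criterion that a coherent sheaf on a reduced scheme with locally constant fiber dimension is locally free. For the image sheaf, the rank of the defining matrix of functions is lower semicontinuous, and constancy of the corank (equivalently the rank) of the map forces the image to be locally free; for the kernel sheaf one argues dually, using that the kernel of a map of vector bundles with locally constant rank is a subbundle. Here I must take care that $X$ is reduced (it is an open subscheme of the reduced variety $\bE(r,\fg)$), so these semicontinuity arguments apply cleanly. The identification of fibers then follows from Proposition~\ref{prop:fibers}: on the open dense subset $U$ provided there, the fiber of $\cIm^{j,X}(M)$ (resp.\ $\cKer^{j,X}(M)$) at $\epsilon$ is naturally $\Rad^j(\epsilon^*M)$ (resp.\ $\Soc^j(\epsilon^*M)$), and since the sheaf is now a bundle with fibers of this constant dimension everywhere on $X$, the identification extends from $U$ to all of $X$ by continuity and functoriality of the construction.

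The main obstacle I anticipate is the fiber identification at points \emph{outside} the open dense set $U$ of Proposition~\ref{prop:fibers}. The proposition only guarantees that the generic fiber agrees with $\Rad^j(\epsilon^*M)$ or $\Soc^j(\epsilon^*M)$ on some open dense $U$, and the elementary example in the Remark shows that the naive specialization $\cKer^j(M)_\epsilon\to\Soc^j(\epsilon^*M)$ can fail to be an isomorphism at non-generic points in general. The resolution is precisely the passage to $X$: because we have removed the jumping locus $Z$, the fiber dimension is \emph{constant} on $X$, so the tautological comparison map between the bundle fiber and the radical/socle space is a surjection (resp.\ injection) of vector spaces of equal dimension, hence an isomorphism at every point of $X$. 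Pinning down that this comparison map is genuinely the natural one, and that constant dimension alone upgrades the generic identification to a pointwise identification on all of $X$, is the delicate step requiring the constant-rank hypothesis built into the definition of $X$.
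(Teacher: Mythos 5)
Your proposal is correct and follows essentially the same route as the paper: reduce to the affine charts, observe that the rank of the specialized operator $\sum\Theta_1^{j_1}(\epsilon)\cdots\Theta_r^{j_r}(\epsilon)$ equals $\dim\Rad^j(\epsilon^*M)$ (resp.\ that its kernel is $\Soc^j(\epsilon^*M)$) and that this dimension is constant on $X$ by the very definition of $Z$, then apply the constant-rank criterion over the reduced variety $X$ --- the paper phrases this by showing the cokernel has constant fiber dimension (via right-exactness of specialization) and is therefore locally free, which is the same standard fact, and then reads off the fiber identification from the resulting locally split exact sequence. The only caveat is that your second paragraph momentarily deduces constancy of the bundle fibers from Proposition~\ref{prop:fibers}, which is only a generic statement; your third paragraph correctly re-grounds the argument on the constancy of the rank of the defining matrix of functions, which is exactly how the paper closes that gap.
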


\begin{proof} 
It suffices to restrict to an arbitrary 
$\Sigma \subset \{ 1,\ldots, n\}$ of cardinality $r$
and prove that the $\cO_{\cU_\Sigma \cap X}$-modules 
$\cIm^j(M)_{|\cU_\Sigma \cap X}$ 
(resp., $(\cKer^j(M)_{|\cU_\Sigma \cap X}$) are 
locally free.  Here, $\cU_\Sigma \subset
\Grass(r,\fg)$ is as in Notation \ref{note3}.  
We set $W$ equal to the closure of $X$
in $\bE(r,\fg)$.

Set $\Theta_s^\Sigma(\epsilon)$ for  
$\epsilon \in \cU_\Sigma$ equal to  the 
specialization of $\Theta_s^\Sigma$ (as given 
in (\ref{eq:Theta})) at the point  $\epsilon$.
Thus, $\Theta_s^\Sigma(\epsilon)$ is the map   
$\Theta_s^\Sigma \otimes_{k[\cU_\Sigma]} k$
defined by tensoring along evaluation at 
$\epsilon$, $ k[\cU_\Sigma] \to k$.
Since specialization is right exact, 
\[\Coker\{ \sum_{\sum j_i=j}(\Theta_1^\Sigma)^{j_1} \cdots 
(\Theta_r^\Sigma)^{j_r} \} \otimes_{k[\cU_\Sigma]} k
\ = \
\Coker\{ 
\sum_{\sum j_i=j}
\Theta_1^\Sigma(\epsilon)^{j_1}\cdots\Theta_r^\Sigma(\epsilon)^{j_r} \}.
\]
This equals 
$$\Coker\{ \sum_{\sum j_i=j}(\Theta_1^{W,\Sigma})^{j_1} \cdots 
(\Theta_r^{W,\Sigma})^{j_r} \} \otimes_{k[W_\Sigma]} k$$
for $\epsilon \in W_\Sigma = \cU_\Sigma \cap W$.
Exactly as in the proof of \cite[6.2]{CFP2}, 
the hypothesis that $\dim \Rad^j(\epsilon^*M)$ 
is the same for any $\epsilon \in X$  implies that 
$\Coker\{ \sum\limits_{\sum j_i=j}(\Theta_1^\Sigma)^{j_1} 
\cdots (\Theta_r^\Sigma)^{j_r} \}_{|\cU_\Sigma \cap X} $
is a locally free $\cO_{\cU_\Sigma \cap X}$-module.  
The short exact sequence
$$
\xymatrix@-0.8pc{
0 \ar[r]& \cIm^j(M)_{W_\Sigma}\ar[r]& (M\otimes k[W_\Sigma])^{\oplus r(j)}
\ar[r] 
& \Coker\{ \sum\limits_{\sum j_i=j}(\Theta_1^{W,\Sigma})^{j_1} 
\cdots (\Theta_r^{W,\Sigma})^{j_r} \}  \ar[r]& 0}
$$ 
localized at $\cU_\Sigma \cap X$ implies that 
$\cIm^j(M)_{|\cU_\Sigma\cap X}$ is locally free, and also enables 
the identification of the fiber above $\epsilon \in \cU_\Sigma \cap X$. 

The proof for $\cKer^j(M)$ is a minor adaptation of above;
see also the proof of Theorem 6.2 of \cite{CFP2}.
\end{proof}

We recall from \cite[4.1]{CFP3}, that a $\fu(\fg)$-module 
is said to have constant $(r,j)$-radical rank
(respectively, $(r,j)$-socle rank) if the dimension of 
$\Rad^j(\epsilon^*M)$ (respectively, $\Soc^j(\epsilon^*M)$)
is independent of $\epsilon \in \bE(r,\fg)$.
As an immediate corollary of Theorem \ref{thm:bundle}, 
we verify that $\cIm^{j,\bE(r,\fg)}(M)$ 
(respectively, $ \cKer^{j,\bE(r,\fg)}(M)$)
is an algebraic vector bundle on $\bE(r,\fg)$ 
provided that $M$ has constant 
$(r,j)$-radical rank (respectively, constant $(r,j)$-socle rank).

\begin{cor}
\label{cor:bundle}   
Let $M$ be an $\fu(\fg)$-module 
of constant $(r,j)$-radical rank 
(respectively, $(r,j)$-socle rank).  
Then the coherent sheaf $\cIm^{j,\bE(r,\fg)}(M)$ 
(resp., $\cKer^{j,\bE(r,\fg)}(M)$) 
is an algebraic vector bundle on $\bE(r,\fg)$.

Moreover, the fiber of $\cIm^{j,\bE(r,\fg)}(M)$ 
(respectively, $\cKer^{j,\bE(r,\fg)}(M)$) at $\epsilon$ is 
naturally identified with $\Rad^j(\epsilon^*M)$ 
(respectively, $\Soc^j(\epsilon^*M)$).
\end{cor}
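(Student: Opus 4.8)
The plan is to deduce this directly from Theorem \ref{thm:bundle} by showing that the constant-rank hypothesis forces the relevant closed subvariety $Z$ to be empty, so that the Zariski open set $X$ appearing there is all of $\bE(r,\fg)$.

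First I would treat the radical case. Suppose $M$ has constant $(r,j)$-radical rank, so that $\dim(\Rad^j(\epsilon^*M))$ takes the same value for every $\epsilon \in \bE(r,\fg)$. In particular this common value coincides with the maximum $\Max R_j$ appearing in Definition \ref{defn:subvar}. Consulting the defining condition (\ref{eq:rad}), we see that no point $\epsilon$ satisfies the strict inequality $\dim(\Rad^j(\epsilon^*M)) < \Max R_j$, whence $\bRad^j(r,\fg)_M = \emptyset$. Setting $Z = \bRad^j(r,\fg)_M$ in Theorem \ref{thm:bundle} therefore gives $X = \bE(r,\fg) \ \backslash \ Z = \bE(r,\fg)$.

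Applying Theorem \ref{thm:bundle} with this $X$ then yields at once that $\cIm^{j,\bE(r,\fg)}(M)$ is an algebraic vector bundle on all of $\bE(r,\fg)$, together with the natural fiberwise identification of $\cIm^{j,\bE(r,\fg)}(M)_\epsilon$ with $\Rad^j(\epsilon^*M)$. The socle case is entirely parallel: constancy of $\dim(\Soc^j(\epsilon^*M))$ forces its common value to equal $\Min S_j$, so by (\ref{eq:soc}) we obtain $\bSoc^j(r,\fg)_M = \emptyset$, hence again $X = \bE(r,\fg)$, and Theorem \ref{thm:bundle} supplies the vector bundle structure of $\cKer^{j,\bE(r,\fg)}(M)$ along with the identification of its fiber at $\epsilon$ with $\Soc^j(\epsilon^*M)$. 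Since the argument is nothing more than a direct unwinding of the definitions, there is no genuine obstacle here; the only point requiring verification is the emptiness of $Z$, which is immediate from the strict inequalities in Definition \ref{defn:subvar}.
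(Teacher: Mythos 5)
Your proposal is correct and follows exactly the paper's own argument: the constant-rank hypothesis forces $\bRad^j(r,\fg)_M$ (resp.\ $\bSoc^j(r,\fg)_M$) to be empty, so Theorem \ref{thm:bundle} applies with $X = \bE(r,\fg)$. The extra detail you supply about the strict inequalities in Definition \ref{defn:subvar} is a fine, if slightly more explicit, unwinding of the same step.
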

\begin{proof}
The condition of constant $(r,j)$-radical rank 
(respectively, $(r,j)$-socle rank) implies that 
$\bRad^j(r, \fg)_M = \emptyset$ 
(respectively, $\bSoc^j(r, \fg)_M = \emptyset$). Hence, 
the corollary is a special case of 
Theorem~\ref{thm:bundle} with $X=\bE(r,\fg)$.
\end{proof}

\begin{ex}
\label{ex:unip}
Let $\fu$ be a nilpotent restricted Lie algebra 
such that $x^{[p]}=0$ for any $x \in \fu$, 
and let $\fu$ also denote the adjoint 
module of $\fu$ on itself. Assume that 
$\fu$ has a maximal elementary subalgebra of 
dimension $r$.  We see that $X  = \bE(r,\fu) \backslash 
\bSoc(r,\fu)_{\fu}$ is  the open subvariety of
$\bE(r,\fu)$ consisting of all maximal elementary 
subalgebras of dimension $r$.   That is, if  
$\epsilon \in X$ is maximal, then
$\Soc(\epsilon^*(\fu)) = \epsilon$, and 
otherwise the dimension of $\Soc(\epsilon^*(\fu))$
is larger than $r$.  Applying Theorem
\ref{thm:bundle} we conclude that    
$\cKer^{1,X}(\fu) \subset \fu\otimes \cO_X$ 
is isomorphic to the restriction along
$X \subset \bE(r,\fu) \subset \Grass(r,\fu)$ of the canonical rank $r$ 
subbundle $\gamma_r \subset \fu \otimes \cO_{\Grass(r,\fu)}$.

If we take $\fu$ to be the Heisenberg algebra $\fu_3$
(the Lie subalgebra of strictly upper triangular matrices in $\gl_3$), then
$\bE(2, \fu_3) \simeq \bP^1$ whenever $p \geq 3$ and every 
$\epsilon \in \bE(2, \fu_3)$ is maximal.  In this case, 
$$
\cKer^{1,\bE(2,\fu_3)}(\fu_3)  \ \simeq \ 
\cO_{\bP^1}(-1) \oplus \cO_{\bP^1} \ 
\subset \ \fu_3 \otimes \cO_{\bP^1}.$$
\end{ex} 

The following proposition refines the analysis given in 
\cite{FP3} of projective modules on $\fsl_2^{\oplus r}$.
We implicitly use the isomorphism $\bE(r,\fsl_2^{\oplus r}) 
\simeq (\bP^1)^{\times r}$ of \cite[1.12]{CFP3}.

\begin{prop}
Let $\fg = \fsl_2^{\oplus r}$ and let $\pi_s: \fg \to \fsl_2$ 
be the $s$-th projection, $1 \leq s \leq r$.  Assume $p \geq 3$.
For each $\lambda$, $0 \leq \lambda \leq p-1$, let 
$P_\lambda$ be the indecomposable projective 
$\fu(\fsl_2)$-module of highest weight $\lambda$.
Then for each  $(\lambda,s) \not= (\lambda^\prime,s^\prime)$,  
there exists some $j$ such that the vector bundle
$\cKer^{j, \bE(r,\fg)}(\pi_s^*(P_\lambda))$ 
on $\bE(r,\fg)$ is not isomorphic to 
$\cKer^{j,\bE(r,\fg)}(\pi_{s^\prime}^*(P_{\lambda^\prime}))$.
\end{prop}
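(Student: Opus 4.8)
The plan is to reduce to the case $r=1$, identify the resulting kernel bundles on $\bP^1=\bE(1,\fsl_2)$ explicitly, and read off the non-isomorphisms already at $j=1$. First I would check that each $\pi_s^*(P_\lambda)$ has constant $(r,j)$-socle rank. Since $\langle x\rangle\subset\fsl_2$ is a restricted subalgebra, $\fu(\fsl_2)$ is free over $\fu(\langle x\rangle)$, so $P_\lambda$ restricts along every nilpotent line to a free $k[x]/(x^p)$-module, of Jordan type $[p^2]$ if $\lambda\le p-2$ and $[p]$ if $\lambda=p-1$. As only the $s$-th summand of $\fsl_2^{\oplus r}$ acts nontrivially on $\pi_s^*(P_\lambda)$, for $\epsilon=(\ell_1,\dots,\ell_r)\in(\bP^1)^{\times r}=\bE(r,\fg)$ the operator computing $\Soc^j$ is just $x_s^j$ with $x_s$ spanning $\ell_s$; hence $\dim\Soc^j(\epsilon^*\pi_s^*(P_\lambda))=\dim\ker(x_s^j\mid P_\lambda)$ is independent of $\epsilon$, and by Corollary~\ref{cor:bundle} we obtain a vector bundle with fiber $\ker(x_s^j\mid P_\lambda)$ over $\epsilon$, depending only on $\ell_s$.

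To make the dependence on $\ell_s$ precise I would use equivariance. Put $G=\SL_2^{\times r}$, so $\fg=\Lie(G)$ and $\pi_s^*(P_\lambda)$ is a rational $G$-module (the $s$-th factor acting through the tilting module $T(2p-2-\lambda)$, whose restriction to $\SL_{2(1)}$ is $P_\lambda$, the remaining factors trivially). By \cite[1.12]{CFP3}, $\bE(r,\fg)=(\bP^1)^{\times r}=G/B^{\times r}$ is a single $G$-orbit, so by Corollary~\ref{cor:equiv} the kernel bundle is $G$-equivariant; and a $G$-equivariant bundle on a homogeneous space is determined by the $B^{\times r}$-representation on its fiber at the base point (the descent equivalence of the type in Proposition~\ref{prop:categ}). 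That fiber, $\ker(x_s^j\mid P_\lambda)$, is inflated through the $s$-th projection $B^{\times r}\to B$, whence
$$\cKer^{j,\bE(r,\fg)}(\pi_s^*(P_\lambda))\ \cong\ \pr_s^*\,\cE^j_\lambda,\qquad \cE^j_\lambda:=\cKer^{j,\bE(1,\fsl_2)}(P_\lambda)=\SL_2\times_B\ker(e^j\mid P_\lambda),$$
with $\pr_s\colon(\bP^1)^{\times r}\to\bP^1$ the $s$-th projection and $e$ a fixed nonzero nilpotent spanning the base point.

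I would then identify $\cE^1_\lambda$ on $\bP^1=\SL_2/B$. For $j=1$, $\ker(e\mid P_\lambda)$ is spanned by highest-weight vectors, hence a sum of $T$-weight lines with trivial unipotent action; from the character of $T(2p-2-\lambda)$ these weights are $2p-2-\lambda$ and $\lambda$ when $\lambda\le p-2$, while for $\lambda=p-1$ the module is Steinberg and $\ker e$ is the single line of weight $p-1$. With the normalization $\SL_2\times_B k_\mu=\cO_{\bP^1}(-\mu)$ this gives
$$\cE^1_\lambda=\cO(\lambda+2-2p)\oplus\cO(-\lambda)\quad(\lambda\le p-2),\qquad \cE^1_{p-1}=\cO(1-p),$$
matching the $r=1$ computations of \cite{FP3}; in particular every $\cE^1_\lambda$ is nontrivial. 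Now the two cases of $(\lambda,s)\ne(\lambda',s')$ follow with $j=1$. If $s\ne s'$, restrict both bundles to a $\bP^1$ on which only the $s'$-th coordinate varies: $\pr_{s'}^*\cE^1_{\lambda'}$ restricts to the nontrivial $\cE^1_{\lambda'}$, while $\pr_s^*\cE^1_\lambda$ restricts to a trivial bundle, so they cannot be isomorphic. If $s=s'$ but $\lambda\ne\lambda'$, pulling back along a section of $\pr_s$ reduces the question to whether $\cE^1_\lambda\cong\cE^1_{\lambda'}$; these differ by rank when exactly one index equals $p-1$, and otherwise by the degree multiset, since $-\lambda\in[-(p-2),0]$ is the larger of the two degrees and recovers $\lambda$.

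The main obstacle is the identification of $\cE^j_\lambda$, i.e. determining $\ker(e^j\mid P_\lambda)$ as a $B$-module; this rests on recognizing $P_\lambda$ as the restriction of the tilting module $T(2p-2-\lambda)$ and reading off its character, after which everything is formal. One subtlety worth flagging is that $\deg\cE^1_\lambda=2-2p$ for every $\lambda\le p-2$, so it is the splitting type, not $c_1$ alone, that separates the non-Steinberg projectives.
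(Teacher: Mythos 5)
Your proposal is correct and follows essentially the same route as the paper: observe that the action of $\epsilon=(\epsilon_1,\dots,\epsilon_r)$ on $\pi_s^*(P_\lambda)$ factors through $\epsilon_s$, deduce that $\cKer^{j,\bE(r,\fg)}(\pi_s^*(P_\lambda))\simeq \pr_s^*\bigl(\cKer^{j,\bE(1,\fsl_2)}(P_\lambda)\bigr)$, and then distinguish the bundles by the $r=1$ computation on $\bP^1$ (which the paper simply cites from \cite[6.3]{FP3} and which your tilting-module calculation of $\ker(e^j\mid P_\lambda)$ reproduces correctly, including the splitting $\cO(-\lambda)\oplus\cO(\lambda+2-2p)$ for $\lambda\le p-2$ and $\cO(1-p)$ for the Steinberg module). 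The only difference is one of packaging: you establish the pullback identification via $G$-equivariance and the induction functor $\cL_{G/H}$ and you supply the details of the cited computation, but the logical skeleton is identical.
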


\begin{proof}
Observe that $\Soc^j(\epsilon^*(\pi_s^*M)) = \Soc^j(\epsilon_s^*M)$ 
for any $\fu(\fsl_2)$-module $M$ and
any $j, 1 \leq j \leq (p-1)r$, where 
$\epsilon = (\epsilon_1,\ldots,\epsilon_r) \in \bE(r,\fsl_2^{\oplus r})$; 
in particular,
the action of $\epsilon$ on $\epsilon^*(\pi_s^*M)$ 
factors through $\epsilon_s$.  This implies that
$\cKer^{j,\bE(r,\fg)}(\pi_s^*(P_\lambda)) \ \simeq \pi_s^*(\cKer^{j,\bE(1,\fsl_2)}(P_\lambda))$.  
The proposition now follows from the computation given in \cite[6.3]{FP3}.
\end{proof}

%
%%%%%%%%%%%%%%%%%%%%%%%%%%%%%%%%%%%%%
%%%%%%SECTION ON BUNDLES%%%%%%%%%%%%%%%%%%%%%%%%
%%%%%%%%%%%%%%%%

\section{Vector bundles on $G$-orbits of $\bE(r,\fg)$: constructions}
\label{sec:Gorbit}

Our explicit examples of algebraic vector 
bundles involve considerations of 
image, cokernel, and kernel sheaves associated to a 
rational $G$-module on $G$-orbits of $\bE(r, \fg)$, 
where $G$ is an algebraic group and 
$\fg$ is the Lie algebra of $G$. In this section 
we develop and recall the techniques which allow us to calculate 
some examples in the next section. 
In particular, Theorem \ref{thm:orbit2} verifies that 
the image and kernel sheaves determine
algebraic vector bundles  on $G$-orbits 
inside $\bE(r,\fg)$.  These vector bundles
are interpreted in Theorem \ref{thm:functor} 
in terms of the well-known induction functor from
rational $H$-modules to vector bundles on $G/H$ for an appropriate subgroup $H \subset G$.
This latter theorem is the main computational tool that we apply in Section~\ref{sec:examples}.

Let $\epsilon \in \bE(r,\fg)$ be an elementary 
subalgebra, and let $X = G \cdot \epsilon$ 
be the $G$-orbit of $\epsilon$ in $\bE(r,\fg)$. 
Then $X$  is open in its closure   
and, hence, to any finite-dimensional 
rational $G$-representation $M$ and any $j$, 
$ 1 \leq  j \leq (p-1)r$, we can associate 
coherent sheaves $\cIm^{j,X}(M)$, $\cKer^{j, X}(M)$ 
on $X$ as in Theorem \ref{thm:equiv3}. Let $\cCoker^{j, X}(M)$ 
denote the quotient sheaf $(M \otimes \cO_X)/\cIm^{j, X}(M)$.

\begin{thm}
\label{thm:orbit2} 
Let $G$ be an affine algebraic group, $\fg = \Lie(G)$, 
and $M$ a  rational $G$-module.
Let $\epsilon \in \bE(r,\fg)$ be an elementary 
subalgebra of rank $r$, and let \
$X = G \cdot \epsilon \subset \bE(r,\fg)$   
be the orbit of $\epsilon$ under the 
adjoint action of $G$.  

Then
$$
\cIm^{j,X}(M), \quad  \cKer^{j,X}(M), \quad  \cCoker^{j,X}(M)
$$ 
are algebraic vector bundles on $X$.  

Moreover, we have natural identifications as $H$-modules
$$
\cIm^{j,X}(M)_\epsilon \ \simeq \ \Rad^j(\epsilon^*M), 
\quad 
\cIm^{j,X}(M)_\epsilon  \ \simeq \ \Soc^j(\epsilon^*M),
$$
where $H$ is the (reduced) stabilizer of $\epsilon \in X$.
\end{thm}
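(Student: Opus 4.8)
The plan is to prove Theorem~\ref{thm:orbit2} by combining the general equivariance result (Corollary~\ref{cor:equiv}) with the orbit structure of $X$, and then identifying the fiber at $\epsilon$ using the generic-fiber computation of Proposition~\ref{prop:fibers} transported along the group action. The three assertions split naturally: first that the sheaves are vector bundles, and second that the fiber over the basepoint $\epsilon$ is the radical (resp.\ socle) as an $H$-module.

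First I would establish the vector bundle property. Since $X = G \cdot \epsilon$ is a $G$-orbit, it is a homogeneous space, hence smooth, and by Corollary~\ref{cor:equiv} the sheaves $\cIm^{j,X}(M)$, $\cKer^{j,X}(M)$ are $G$-equivariant coherent sheaves on $X$. The key observation is that a $G$-equivariant coherent sheaf on a homogeneous space is automatically locally free: the action of $G$ moves any point to any other point, so the fiber dimension is constant (the stalk at $g\cdot \epsilon$ is carried isomorphically to the stalk at $\epsilon$ by the equivariant structure), and a coherent sheaf of locally constant fiber dimension on a reduced scheme is locally free. This is exactly the principle flagged in the remark following Corollary~\ref{cor:equiv}. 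For $\cCoker^{j,X}(M) = (M\otimes \cO_X)/\cIm^{j,X}(M)$, the same constant-rank argument applies once we know $\cIm^{j,X}(M)$ is a subbundle of constant rank; alternatively, it follows because the quotient of $M\otimes\cO_X$ by a subbundle is again a bundle on a smooth variety once the rank is constant.

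Next I would identify the fibers. Proposition~\ref{prop:fibers} already gives the identifications $\cIm^{j,X}(M)_{\epsilon'} \simeq \Rad^j(\epsilon'^*M)$ and $\cKer^{j,X}(M)_{\epsilon'} \simeq \Soc^j(\epsilon'^*M)$ for $\epsilon'$ in some dense open $U \subset X$. Since $X$ is a single $G$-orbit, $G(k)$ acts transitively on $X$; given any point, in particular the basepoint $\epsilon$, there is $g \in G(k)$ carrying it into $U$. The adjoint action of $g$ carries $\epsilon^*M$ to $(g\cdot\epsilon)^*M$ compatibly with the formation of radical and socle filtrations (because the $G$-action commutes with the operators $\Theta_s$ up to the change of variable recorded in the equivariant structure), and the $G$-equivariance of the sheaves makes the fiber identification propagate from $U$ to all of $X$. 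Thus the fiber at $\epsilon$ is $\Rad^j(\epsilon^*M)$ (resp.\ $\Soc^j(\epsilon^*M)$) as asserted. Finally, the stabilizer $H = \Stab_G(\epsilon)^{\mathrm{red}}$ acts on $\epsilon$ and hence on $\epsilon^*M$, and the equivariant structure on the sheaf restricts on the fiber over the fixed point $\epsilon$ to exactly this $H$-action; this upgrades the vector-space identification to an identification of $H$-modules.

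The main obstacle I anticipate is the compatibility of the $G$-action with the filtrations in a way that is genuinely $H$-equivariant at the fiber, rather than merely giving an abstract isomorphism of vector spaces. One must check that when $g$ fixes $\epsilon$ (so $g \in H$), the induced automorphism of the stalk $(M\otimes\cO_X)_\epsilon$ really is the linear map $\Rad^j(\epsilon^*M) \to \Rad^j(\epsilon^*M)$ coming from $g$ acting on $M$ and on $\epsilon$, and that this respects the descent equivalence $\eta_X$ of Proposition~\ref{prop:categ}. This requires tracing the equivariant structure through the torsor $\wt X \to X$ and verifying that the operators $\Theta_s^{\wt X}$ are $H$-equivariant on the fiber, which is the content behind Corollary~\ref{cor:equiv} but must be pinned down at the level of a single fiber to get the $H$-module statement rather than just constant rank.
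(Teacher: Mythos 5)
Your proof is correct and follows essentially the same route as the paper's: $G$-equivariance (Corollary~\ref{cor:equiv}) plus transitivity of the orbit action gives constant fiber dimension, hence local freeness on the Noetherian variety $X$, and the fiber at $\epsilon$ is identified via Proposition~\ref{prop:fibers} transported by equivariance, with the $H$-module structure read off from the stabilizer's action on the stalk. The compatibility issue you flag at the end is exactly the point the paper disposes of by observing that the $H$-action on the fiber, being $\cO_X$-linear and diagonal on $M\otimes\cO_{X,\epsilon}$, is the one induced by $H\subset G$ acting on $M$.
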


\begin{proof}
Since $X$ is a $G$-stable locally closed 
subset of $\bE(r,\fg)$, the coherent sheaves 
$\cIm^{j,X}(M), \ \cCoker^{j,X}(M), \ \cKer^{j,X}(M)$ 
are $G$-equivariant  by  Corollary~\ref{cor:equiv}. 
If $x = g\cdot\epsilon $ for some $g\in G$, then 
the action of $g$ on one of these sheaves sends 
the fiber at $\epsilon$ isomorphically to the 
fiber at $x$.  Since $X$ is  Noetherian, 
we conclude that the sheaves  are locally free 
(see, for example,  \cite[4.11]{FP3} or \cite[5. ex. 5.8]{Har}).

The action of $G$ on $\bE(r,\fg)$ determines for each $g\in G,
\ x \in X$ an isomorphism $g: \cO_{X,x} \to \cO_{X,g^{-1}x}$.
Together with the action of $G$ on $M$, this determines the
(diagonal) action $g: M\otimes \cO_{X,x} \to M\otimes \cO_{X,g^{-1}x}$.
In particular, this determines an action of $H$ on
$M \otimes \cO_{X,\epsilon}$.  Since the action of $G$ is 
$\cO_X$-linear, this determines actions of $H$ on the fibers at
$\epsilon$,
$\cIm^{j,X}(M)_\epsilon, \ \cIm^{j,X}(M)_\epsilon$,
of the coherent sheaves $\cIm^{j,X}(M), \ \cIm^{j,X}(M)$.
As is readily checked using the explicit description of the action
just given, this action on the fibers is that determined by the
action of $H \subset G$ on $M$.  The second assertion now follows
using the isomorphisms of Proposition \ref{prop:fibers}.
\end{proof}

The quotient of an affine algebraic group $G$ by a closed subgroup $H$
is representable by variety $G/H$ (see \cite[I.5.6(8)]{Jan}).  The following
``sheaf-theoretic induction functor" enables a reasonably explicit description
of $G$-equivariant coherent sheaves on $G/H$.

\begin{prop}
\label{prop:cL} 
Let $G$ be an affine algebraic group, 
$H \subset G$ a closed subgroup.   For each
(finite dimensional) rational $H$-module $W$, 
consider the sheaf of $\cO_{G/H}$-modules
$\cL_{G/H}(W)$ which sends an open subset 
$U \subset G/H$ to
\begin{equation}
\label{cL}
\cL_{G/H}(W)(U) \ = \  \{\text {sections of~} G 
\times^H W \to G/H \text{~ above ~} U\}.
\end{equation}
\begin{enumerate}
\item So defined, $W \mapsto \cL_{G/H}(W)$ induces an equivalence
of categories 
\begin{equation} 
\label{l:eq}
\left\{\begin{array}{c}  \text{finite dimensional}\\ \text{rational $H$-modules} 
\end{array}\right\} \quad \sim \quad  \left\{\begin{array}{c}  \text{$G$-equivariant 
algebraic}\\ \text{vector bundles on $G/H$} 
\end{array}\right\}
\end{equation}	
	
\item  If $W$ is the restriction of 
a rational $G$-module, then $\cL_{G/H}(W)$ is isomorphic
 to $W\otimes \cO_{G/H}$, a free coherent sheaf of 
$\cO_{G/H}$-modules.
\item $\cL_{G/H}(-)$ is exact and commutes with 
tensor powers $(-)^{\otimes^i}$, duals $(-)^\#$, 
symmetric powers $S^i(-)$, divided 
powers $\Gamma^i(-)$, exterior powers $\Lambda^i(-)$,
and Frobenius twists $(-)^{(i)}$.
\end{enumerate}
\end{prop}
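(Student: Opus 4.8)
The plan is to realize $\cL_{G/H}(-)$ as the functor effecting faithfully flat descent along the quotient map $\pi\colon G \to G/H$, after which all three parts follow from a single computation on $G$. First I would record the standard facts that $\pi$ is faithfully flat and is an $H$-torsor, with $G \times_{G/H} G \simeq G \times H$ via $(g, gh) \mapsto (g,h)$ (see \cite[I.5.6]{Jan}); consequently the associated bundle $G \times^H W$ is an algebraic vector bundle on $G/H$ whose sheaf of sections is $\cL_{G/H}(W)$. The key observation, which drives everything below, is the canonical isomorphism $\pi^*\cL_{G/H}(W) \simeq W \otimes \cO_G$: pulling the associated bundle back along $\pi$ trivializes it, since the $H$-action used to form the quotient becomes free after this base change.

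For part (1), I would produce the inverse functor sending a $G$-equivariant vector bundle $\cV$ on $G/H$ to its fiber $\cV_{eH}$ at the base point $eH$; since $H = \Stab(eH)$ acts on $G/H$ fixing $eH$, this fiber is naturally a rational $H$-module. That $W \mapsto \cL_{G/H}(W)$ and $\cV \mapsto \cV_{eH}$ are mutually inverse equivalences is exactly descent for the torsor $\pi$: a $G$-equivariant sheaf on $G/H$ is the same datum as a sheaf on $G$ that is equivariant for the left $G$-action---hence trivial and determined by its fiber over $e$---together with a commuting right $H$-action, i.e. a rational $H$-module. I expect the verification that the unit and counit of this correspondence are isomorphisms to be the main obstacle; as with the other parts, it is handled by pulling back along $\pi$ and invoking the isomorphism $\pi^*\cL_{G/H}(W) \simeq W \otimes \cO_G$, so that the comparison reduces to the trivial bundle on $G$.

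For part (2), if $W$ is the restriction of a rational $G$-module then the map $G \times^H W \to (G/H) \times W$, $(g,w) \mapsto (gH,\, g\cdot w)$, is well defined and an isomorphism of bundles, giving $\cL_{G/H}(W) \simeq W \otimes \cO_{G/H}$. For part (3), exactness and compatibility with tensor powers, duals, symmetric, divided, and exterior powers, and Frobenius twists are all checked after applying the faithfully flat---hence exact and conservative---pullback $\pi^*$: under $\pi^*\cL_{G/H}(W) \simeq W \otimes \cO_G$ each such construction applied to $\cL_{G/H}(W)$ is carried to the corresponding constant construction applied to $W \otimes \cO_G$, where the assertions are immediate, and conservativity of $\pi^*$ transports them back to $G/H$. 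The only case deserving separate comment is the Frobenius twist, which commutes with $\pi^*$ because Frobenius is natural in the $k$-algebra structure; alternatively all of part (3) may be cited from \cite[I.5]{Jan}.
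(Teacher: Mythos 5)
Your proposal is correct and rests on the same mechanism as the paper's proof: descent along the fppf $H$-torsor $\pi\colon G \to G/H$, with the trivialization $\pi^*\cL_{G/H}(W) \simeq W \otimes \cO_G$ doing all the work (the paper simply outsources these steps to \cite[I.5.8--5.9, II.4.1]{Jan}, \cite[\S 1]{Panin}, and \cite[III.4.2.4]{DG} rather than carrying out the descent explicitly). The only point worth making explicit in your write-up is that Zariski-local freeness of $\cL_{G/H}(W)$ is obtained from its fppf-local freeness by descent for locally free sheaves of finite rank, which is precisely the content of the paper's citation of \cite{DG}.
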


\begin{proof}
A discussion of the functor $\cL_{G/H}(-)$ of (\ref{cL}) can be found in in \cite[I.5.8, I.5.9]{Jan}
as well as in  \cite[\S 1]{Panin}.

The key property of this functor in our context of $H \subset G$
 is that $\cL_{G/H}(W)$ as in (1) is a locally free sheaf on 
$G/H$.  This follows from the fact that $p: G \to G/H$ is an $H$-torsor 
in the fppf topology (see \cite[III.4.1.8]{DG}) and that
the sheaf of sections of such an 
$H$-torsor is locally trivial in the Zariski topology
by \cite[III.4. 2.4]{DG}.

The fact that $\cL_{G/H}(-)$ is $G$-equivariant and induces the equivalence 
of categories (\ref{l:eq}) is observed in \cite[\S 1]{Panin} where the functors 
inducing this equivalence are made explicit.  Statement (2) easily follows from the equivalence (\ref{l:eq}) 
and is also shown in \cite[5.12.(3)]{Jan} in greater generality than we need here.

To prove (3), we refer the reader to the proof of \cite[II.4.1]{Jan} in the special case 
in which $H \subset G$ is a parabolic subgroup of a reductive algebraic group; 
this restriction on $H\subset G$ is  used only to insure that $\cL_{G/H}(W)$ is 
locally free which is verified above for more general $H \subset G$.
\end{proof}

The following proposition is an essentially immediate corollary of Proposition \ref{prop:cL},
especially (\ref{l:eq}).

\begin{prop}
\label{prop:cL2} Let $G$ be an affine algebraic  
group, and $H \subset G$ be a closed subgroup. 
Let $\cE$ be a  $G$-equivariant vector bundle on $G/H$;
set $W$ equal to  the fiber of $\cE$ 
over the coset $eH \in G/H$ and equip this fiber 
with the $H$-module structure obtained by 
restricting the action of $G$ to $H$ (which stabilizes the fiber over $eH \in G/H$).
Then there is a unique $G$-equivariant isomorphism
$$ \cL_{G/H}(W) \ \quad \simeq \quad \cE$$
which is the identity map on fibers over  the coset $eH \in G/H$.
\end{prop}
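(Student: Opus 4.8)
The plan is to read everything off the equivalence of categories (\ref{l:eq}) established in Proposition~\ref{prop:cL}(1). The structural fact I would rely on, which is part of the explicit description of that equivalence recorded in \cite[\S 1]{Panin}, is that a quasi-inverse to $\cL_{G/H}(-)$ is the fiber functor $\cE \mapsto \cE_{eH}$ sending a $G$-equivariant bundle to its fiber over the base point $eH$, equipped with the $H$-action obtained by restricting the $G$-equivariant structure (note $H$ stabilizes $eH$). Concretely, for every rational $H$-module $V$ there is a canonical identification $\cL_{G/H}(V)_{eH} \simeq V$ of $H$-modules, coming from $eH \times^H V = V$ inside the associated bundle $G \times^H V$; and for any $H$-module map $f\colon V_1 \to V_2$ the morphism $\cL_{G/H}(f)$ restricts over $eH$, under these identifications, to $f$ itself, by functoriality of $G\times^H(-)$. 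I would record this identification first, since it is exactly what lets me compute restrictions to the fiber over $eH$.

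For existence, essential surjectivity of (\ref{l:eq}) together with the quasi-inverse just described yields a $G$-equivariant isomorphism $\phi\colon \cL_{G/H}(W) \xrightarrow{\sim} \cE$, where $W = \cE_{eH}$. Restricting $\phi$ to the fiber over $eH$ and using the canonical identification $\cL_{G/H}(W)_{eH} \simeq W$ produces an $H$-module automorphism $\phi_{eH}\colon W \to W$, which need not be the identity. I would then correct $\phi$ by precomposing with $\cL_{G/H}(\phi_{eH}^{-1})$; the composite
\[
\psi \ := \ \phi \circ \cL_{G/H}(\phi_{eH}^{-1}) \colon \ \cL_{G/H}(W) \ \longrightarrow \ \cE
\]
is a $G$-equivariant isomorphism whose restriction to the fiber over $eH$ is $\phi_{eH} \circ \phi_{eH}^{-1} = \mathrm{id}_W$, by the compatibility of $\cL_{G/H}(-)$ with fibers noted above. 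This furnishes the required $G$-equivariant isomorphism that is the identity on fibers over $eH$.

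For uniqueness, suppose $\psi_1, \psi_2\colon \cL_{G/H}(W) \to \cE$ are two such isomorphisms, each restricting to $\mathrm{id}_W$ on the fiber over $eH$. Then $\alpha := \psi_2^{-1}\circ \psi_1$ is a $G$-equivariant automorphism of $\cL_{G/H}(W)$ restricting to $\mathrm{id}_W$ over $eH$. Since (\ref{l:eq}) is fully faithful, $\alpha = \cL_{G/H}(\beta)$ for a unique $H$-endomorphism $\beta$ of $W$, and $\beta$ is recovered as the restriction of $\alpha$ to the fiber over $eH$; hence $\beta = \mathrm{id}_W$ and $\alpha = \mathrm{id}$, so $\psi_1 = \psi_2$. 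The only step carrying genuine content—and the one I expect to be the real obstacle—is the claim that the fiber functor over $eH$ is a quasi-inverse to $\cL_{G/H}(-)$, equivalently that a $G$-equivariant morphism of bundles on $G/H$ is determined by and reconstructible from its value on the single fiber over $eH$. This is precisely the explicit form of the equivalence (\ref{l:eq}) from \cite[\S 1]{Panin}, valid because $G$ acts transitively on $G/H$; granting it, both assertions reduce to the routine identifications above.
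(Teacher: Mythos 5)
Your proof is correct and takes essentially the same route as the paper, which offers no separate argument but simply declares the proposition ``an essentially immediate corollary'' of the equivalence of categories (\ref{l:eq}) from Proposition \ref{prop:cL}. Your write-up merely makes explicit the quasi-inverse (the fiber functor at $eH$), the normalization step, and the uniqueness via full faithfulness, all of which are implicit in the paper's appeal to that equivalence.
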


We point out the following immediate 
consequence of Proposition \ref{prop:cL2}.

\begin{cor}
\label{cor:cansub}
Let $W$ be a rational representation of $H$ 
of dimension $r$, $V$ a rational representation of $G$,
and $W \subset V$ be a monomorphism of $H$-modules.  Then 
$\cL_{G/H}(W) \ \subset \ \cL_{G/H}(V)=V \otimes\cO_{G/H}$ naturally 
corresponds to a map $f: G/H \to \Grass(r,V)$ sending the orbit $gH$ 
to the subspace $g \cdot W \subset V$.  Under this correspondence, we have
\[\cL_{G/H}(W) = f^*(\gamma_r),\]
where $\gamma_r$ is the canonical rank 
$r$ subbundle on $\Grass(r,V)$. Moreover, the embedding 
$\cL_{G/H}(W) \ \subset \ \cL_{G/H}(V)=V \otimes \cO_{G/H}$ is the pull-back via $f$ of the canonical embedding 
$\gamma_r \ \subset \ V\otimes \cO_{\Grass(r,V)}$.
\end{cor}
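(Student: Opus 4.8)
The plan is to deduce everything from the exactness and compatibility properties of the induction functor $\cL_{G/H}(-)$ together with the universal property of the Grassmannian. First I would apply $\cL_{G/H}(-)$ to the short exact sequence $0 \to W \to V \to V/W \to 0$ of rational $H$-modules (here $V$ is regarded as an $H$-module by restriction, and $V/W$ inherits an $H$-module structure). By Proposition \ref{prop:cL}(3) the functor is exact, and by Proposition \ref{prop:cL}(2) it carries the rational $G$-module $V$ to the free sheaf $V \otimes \cO_{G/H}$; hence I obtain a short exact sequence of $\cO_{G/H}$-modules
$$0 \to \cL_{G/H}(W) \to V \otimes \cO_{G/H} \to \cL_{G/H}(V/W) \to 0.$$
All three terms are locally free by Proposition \ref{prop:cL}(1) (the outer two being induced from finite dimensional $H$-modules), so $\cL_{G/H}(W)$ is a rank $r$ subbundle of the trivial bundle $V \otimes \cO_{G/H}$ with locally free quotient.

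Next I would invoke the universal property of $\Grass(r,V)$: a rank $r$ subbundle $\cS \subset V \otimes \cO_Y$ with locally free cokernel, over any $k$-variety $Y$, is classified by a unique morphism $f \colon Y \to \Grass(r,V)$ such that $\cS = f^*(\gamma_r)$ and the inclusion $\cS \subset V \otimes \cO_Y$ is the pullback along $f$ of the tautological inclusion $\gamma_r \subset V \otimes \cO_{\Grass(r,V)}$. Taking $Y = G/H$ and $\cS = \cL_{G/H}(W)$ produces the map $f$ and immediately yields both $\cL_{G/H}(W) = f^*(\gamma_r)$ and the asserted identification of the embedding. What remains is to identify $f$ on points.

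Finally I would compute the fiber of $\cL_{G/H}(W)$ at a coset $gH \in G/H$ as a subspace of $V$, since the fiber of $f^*(\gamma_r)$ at $gH$ is precisely the point $f(gH) \in \Grass(r,V)$ viewed as an $r$-plane in $V$. The functor $\cL_{G/H}(-)$ and its inclusion into $V \otimes \cO_{G/H}$ are $G$-equivariant, so the action of $g \in G$ carries the fiber at $eH$ to the fiber at $gH$ compatibly with the embeddings into $V$. By Proposition \ref{prop:cL2} the fiber of $\cL_{G/H}(W)$ at $eH$, with its induced $H$-structure, is $W$ sitting inside $V$ via the given monomorphism; translating by $g$ therefore gives $g \cdot W \subset V$, whence $f(gH) = g \cdot W$. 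This is well defined because $W$ is $H$-stable: if $g' = gh$ with $h \in H$ then $g' \cdot W = g \cdot (h \cdot W) = g \cdot W$. I expect the main obstacle to be bookkeeping rather than anything deep, namely keeping the $G$-equivariant structures on $\cL_{G/H}(W)$, on the ambient trivial bundle, and on $\gamma_r$ (via the $G$-action on $\Grass(r,V)$ induced by $V$) mutually compatible, so that the translation argument genuinely computes $f(gH)$; everything else reduces to the two cited propositions and the representability of the Grassmannian functor.
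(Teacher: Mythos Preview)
Your proof is correct. The paper does not write out a proof of this corollary; it simply declares it an immediate consequence of Proposition~\ref{prop:cL2}. The implicit argument the paper has in mind is the reverse of yours: one \emph{defines} $f$ by the formula $gH \mapsto g\cdot W$ (well-defined since $W$ is $H$-stable), observes that $f$ is $G$-equivariant for the $G$-action on $\Grass(r,V)$ induced by $V$, and hence that $f^*(\gamma_r)$ is a $G$-equivariant bundle on $G/H$ whose fiber at $eH$ is $W\subset V$; Proposition~\ref{prop:cL2} then gives the $G$-equivariant identification $f^*(\gamma_r)\simeq \cL_{G/H}(W)$ inside $V\otimes\cO_{G/H}$. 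You instead construct the subbundle first via exactness of $\cL_{G/H}(-)$, invoke the universal property of the Grassmannian to produce $f$, and then read off $f$ on points from $G$-equivariance. The two routes are dual and equally short; yours has the mild advantage of making explicit that $\cL_{G/H}(W)\subset V\otimes\cO_{G/H}$ has locally free cokernel, which is what licenses the use of the Grassmannian's moduli description.
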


\begin{ex}
\label{ex:standard}
We identify some standard bundles using the 
functor $\cL$ in the special case $G = \GL(V) = \GL_n$ and 
$P = P_{r,n-r}$,  a maximal parabolic with 
the Levi factor $L \simeq \GL_r \times\GL_{n-r}$.
Set $X = \Grass(r,V) = G/P$ and denote by 
$W \subset V$ the subspace of dimension $r$ 
stabilized by $P$;
the action of $P$ on $W$ is given by composition 
of the projection $P \to L \to \GL(W).$ 
Then $\cL_{G/P}(W) \subset V\otimes \cO_{G/P}$ corresponds to the
$G$-equivariant isomorphism $f: G/P \stackrel{\sim}{\to} \Grass(r,V)$
sending the identity coset to  $W \subset V$.  Thus, by Corollary 3.4
we have isomorphisms
\begin{equation}
\label{deltagamma}
\gamma_r \ \simeq \ \cL_{G/P}(W) \subset \cL_{G/P}(V) 
\simeq V\otimes \cO_X, \quad \delta_{n-r} \ 
\simeq \ \cL_{G/P}((V/W)^\#),
\end{equation}
where $\gamma_r$ (resp., $\delta_{n-r}$) is the canonical rank $r$ (resp., rank $n-r$) subbundle on $X$.

Observe that we have a short exact sequence of 
algebraic vector bundles on $X$: 
\begin{equation}
\xymatrix{0 \ar[r]& \gamma_{r} \ar[r]& V \otimes  
\cO_X \ar[r] & \delta_{n-r}^\vee \ar[r]& 0 },
\end{equation}
where we denote by $\cE^\vee$ the dual  sheaf to $\cE$. 
If $F(-)$ is one of the functors of Proposition \ref{prop:cL}.(3),
then Proposition \ref{prop:cL}.(3) implies that
$$F(\gamma_r) \ \simeq \ \cL_{G/P}(F(W)).$$
\end{ex}

Combining Theorem \ref{thm:orbit2} and  Proposition 
\ref{prop:cL2}, we conclude 
the following ``identifications" of the vector 
bundles on a $G$-orbit in $\bE(r,\fg)$ associated to a 
rational $G$-module.
The proof follows immediately from these propositions.

\begin{thm}
\label{thm:functor}
Let $G$ be an algebraic group and $M$ be a rational $G$-module.
Set $\fg = \Lie(G)$, and let $r$ be a positive integer.  Let
$X \ \equiv \ G \cdot \epsilon \ \subset \ \bE(r,\fg)$ be a $G$-orbit and
set $H \subset G$ to be the (reduced) stabilizer 
of $\epsilon \in \bE(r,\fg)$.

We assume that $X \simeq G/H$, and consider
$\cL_{G/H}: H{\text-mod} \to 
G/H{\text -bundles}$ as in \eqref{cL}. 
For any $j$, $ 1 \leq j \leq (p-1)r$, we have the 
following isomorphisms of $G$-equivariant vector bundles
 \[\cIm^{j,X}(M) \ \simeq \ \cL_{G/H}(\Rad^j(\epsilon^*M)), 
\quad \cKer^{j,X}(M) \ \simeq \ \cL_{G/H}(\Soc^j(\epsilon^*M))\]
as subbundles of the trivial bundle $\cL_{G/H}(M) = M \otimes \cO_X$, 
where $\Rad^j(\epsilon^*M)$, 
$\Soc^j(\epsilon^*M)$  are endowed with the action 
of $H$ induced by the action of $G$ on $M$. 
\end{thm}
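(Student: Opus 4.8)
The plan is to assemble the isomorphisms directly from the two results invoked in the statement, with essentially no new computation required. First I would recall the setup: by Theorem~\ref{thm:orbit2}, since $X = G \cdot \epsilon$ is a $G$-orbit and $M$ is a rational $G$-module, the sheaves $\cIm^{j,X}(M)$ and $\cKer^{j,X}(M)$ are $G$-equivariant algebraic vector bundles on $X$, and their fibers over $\epsilon$ are identified as $H$-modules with $\Rad^j(\epsilon^*M)$ and $\Soc^j(\epsilon^*M)$ respectively, where $H$ is the reduced stabilizer of $\epsilon$. Under the hypothesis $X \simeq G/H$, these are $G$-equivariant vector bundles on $G/H$ whose fibers over the identity coset $eH$ carry precisely those $H$-module structures.

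Next I would apply Proposition~\ref{prop:cL2} to each bundle separately. For $\cIm^{j,X}(M)$, setting $W = \Rad^j(\epsilon^*M)$ equal to the fiber over $eH$ with its induced $H$-structure, Proposition~\ref{prop:cL2} yields a unique $G$-equivariant isomorphism $\cL_{G/H}(\Rad^j(\epsilon^*M)) \simeq \cIm^{j,X}(M)$ that is the identity on the fiber over $eH$. The identical argument with $W = \Soc^j(\epsilon^*M)$ gives $\cL_{G/H}(\Soc^j(\epsilon^*M)) \simeq \cKer^{j,X}(M)$. This produces the two asserted isomorphisms abstractly.

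The remaining point, and the only place requiring genuine care, is compatibility of these isomorphisms with the ambient trivial bundle, i.e.\ the claim that they realize $\cIm^{j,X}(M)$ and $\cKer^{j,X}(M)$ as subbundles of $\cL_{G/H}(M) = M \otimes \cO_X$. Here I would invoke the functoriality in Proposition~\ref{prop:cL}.(1): the inclusions $\Rad^j(\epsilon^*M) \hookrightarrow \epsilon^*M$ and $\Soc^j(\epsilon^*M) \hookrightarrow \epsilon^*M$ are monomorphisms of $H$-modules, and since $\cL_{G/H}(-)$ is an exact functor (Proposition~\ref{prop:cL}.(3)) it carries these to monomorphisms of vector bundles $\cL_{G/H}(\Rad^j(\epsilon^*M)) \hookrightarrow \cL_{G/H}(M)$ and likewise for the socle. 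By Proposition~\ref{prop:cL}.(2), since $M$ is the restriction of a rational $G$-module, $\cL_{G/H}(M) \simeq M \otimes \cO_X$ is the trivial bundle. The subbundle inclusions $\cIm^{j,X}(M), \cKer^{j,X}(M) \subset M \otimes \cO_X$ coming from Theorem~\ref{thm:equiv3} induce the given $H$-module inclusions on fibers over $eH$, so the uniqueness clause of Proposition~\ref{prop:cL2} forces the isomorphisms to be compatible with these embeddings.

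The main obstacle I anticipate is precisely this last compatibility: verifying that the abstract $G$-equivariant isomorphism supplied by Proposition~\ref{prop:cL2} is the \emph{same} embedding into $M \otimes \cO_X$ as the one defining $\cIm^{j,X}(M)$ and $\cKer^{j,X}(M)$ as subsheaves, rather than merely an abstract isomorphism of the total spaces. The resolution is that both the inclusion $\cL_{G/H}(\Rad^j(\epsilon^*M)) \hookrightarrow \cL_{G/H}(M)$ and the defining inclusion $\cIm^{j,X}(M) \hookrightarrow M \otimes \cO_X$ are $G$-equivariant maps of vector bundles on $G/H$ agreeing on the fiber over $eH$ (both being the inclusion $\Rad^j(\epsilon^*M) \subset \epsilon^*M$), and a $G$-equivariant morphism of bundles on a homogeneous space $G/H$ is determined by its value on the fiber over the base point; hence the two inclusions coincide. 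This completes the proof, and as noted the argument is immediate once Theorem~\ref{thm:orbit2} and Proposition~\ref{prop:cL2} are in hand.
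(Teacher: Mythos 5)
Your proposal is correct and follows the same route as the paper, which simply states that the theorem ``follows immediately'' from Theorem~\ref{thm:orbit2} and Proposition~\ref{prop:cL2}. Your additional verification that the abstract isomorphism from Proposition~\ref{prop:cL2} is compatible with the embedding into $M \otimes \cO_X$ (via exactness of $\cL_{G/H}(-)$ and the fact that a $G$-equivariant bundle map on $G/H$ is determined by its restriction to the fiber over $eH$) is a detail the paper leaves implicit, and it is handled correctly.
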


If the orbit map $\phi_\epsilon: G \to G\cdot \epsilon \subset \bE(r,\fg)$ is 
separable, then  $\phi_\epsilon$ induces an isomorphism 
$\ol \phi_\epsilon: G/H \simeq G\cdot \epsilon$ (see, 
for example, \cite{Jan04}). 
To complement Theorem \ref{thm:functor}, we give the following criterion for the
separability of the orbit map.

\begin{thm}
\label{thm:sep}
Let $G$ be a simple algebraic group whose Coxeter number $h$ satisfies the condition $p > 2h-2$.
Then for any  $r$-dimensional subspace  $\epsilon$ of $\fg$ whose (adjoint) orbit 
$G \cdot \epsilon$ is closed in $\Grass(r,\fg)$, the orbit map 
\[
\phi_\epsilon: G \ \to \ G\cdot \epsilon \subset \Grass(r,\fg)
\]
is separable.
\end{thm}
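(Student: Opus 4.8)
The orbit map $\phi_\epsilon: G \to G\cdot\epsilon \subset \Grass(r,\fg)$ is separable precisely when its differential $(d\phi_\epsilon)_e: \fg \to T_\epsilon(G\cdot\epsilon)$ has kernel equal to $\Lie(G_\epsilon)$, the Lie algebra of the scheme-theoretic stabilizer. Equivalently, writing $\fg_\epsilon$ for the \emph{scheme-theoretic} stabilizer Lie algebra and $\fh = \Lie(H)$ for the Lie algebra of the reduced stabilizer $H = (G_\epsilon)_{\mathrm{red}}$, separability amounts to the equality $\fg_\epsilon = \fh$, i.e. to the stabilizer subgroup scheme $G_\epsilon$ being smooth (reduced). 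So the plan is to reduce the separability statement to a dimension count on Lie algebras and then to identify $\fg_\epsilon$ explicitly via the adjoint action.

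\textbf{Key steps.} First I would compute $(d\phi_\epsilon)_e$. Since $\phi_\epsilon$ is the adjoint-action orbit map into the Grassmannian, its differential at $e$ sends $x \in \fg$ to the tangent vector determined by the infinitesimal motion $\ad(x)$ of the $r$-plane $\epsilon$; concretely, the tangent space $T_\epsilon\Grass(r,\fg)$ is $\Hom(\epsilon, \fg/\epsilon)$, and $(d\phi_\epsilon)_e(x)$ is the class of $\ad(x)|_\epsilon$ in $\Hom(\epsilon,\fg/\epsilon)$. Therefore the scheme-theoretic stabilizer Lie algebra is $\fg_\epsilon = \{x \in \fg : [x,\epsilon] \subseteq \epsilon\} = \mathfrak{n}_\fg(\epsilon)$, the normalizer of $\epsilon$ in $\fg$. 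The separability of $\phi_\epsilon$ is thus equivalent to the assertion that $\dim \mathfrak{n}_\fg(\epsilon)$ equals $\dim H = \dim(G\cdot\epsilon)$ subtracted from $\dim G$, i.e. that the normalizer $\mathfrak{n}_\fg(\epsilon)$ is not strictly larger than $\fh$. Second, I would bring in the hypothesis $p > 2h-2$. Under this bound the adjoint representation is well-behaved: in particular the trace form (Killing form) on $\fg$ is nondegenerate and $\fg$ is a simple restricted Lie algebra with no issues of $p$ dividing the relevant structure constants, and the standard $\mathfrak{sl}_2$-theory and the decomposition of $\fg$ into root spaces behave as in characteristic zero up through degree $2h-2$. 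The role of this hypothesis is exactly to guarantee that the infinitesimal stabilizer cannot acquire extra ``nilpotent'' directions beyond those coming from $H$ — this is where I would invoke a result on good primes and reductivity of centralizers.

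\textbf{Reduction via closedness.} The hypothesis that $G\cdot\epsilon$ is \emph{closed} in $\Grass(r,\fg)$ is essential: for a closed orbit the stabilizer $H$ is a subgroup of maximal reductive type in a suitable sense (by the Hilbert–Mumford / Kempf theory, or Matsushima's criterion, the closedness of the orbit forces $H$ to be reductive when $G$ is reductive, since $G/H$ is then affine). Reductivity of $H$, combined with the bound on $p$, lets me control $\fh$ and compare it to $\mathfrak{n}_\fg(\epsilon)$. The argument would then run: $\fh \subseteq \mathfrak{n}_\fg(\epsilon) = \fg_\epsilon$ always, and I must rule out strict inclusion. Any element of $\fg_\epsilon \setminus \fh$ would contribute an infinitesimal automorphism of $\epsilon$ not integrating to a one-parameter subgroup of $H$; under $p > 2h-2$ and closedness one shows using the nondegeneracy of the form and reductivity of $H$ that no such element exists, so $\fg_\epsilon = \fh$ and $\phi_\epsilon$ is separable.

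\textbf{Main obstacle.} The crux is the last comparison $\fg_\epsilon = \fh$, i.e. smoothness of the stabilizer scheme $G_\epsilon$. This is genuinely a positive-characteristic phenomenon: in general $G_\epsilon$ can be non-reduced, and it is precisely the inequality $p > 2h-2$ that is engineered to prevent this. I expect the hard part to be producing a clean, hypothesis-driven argument — most likely by citing a structural result (this is almost certainly where the acknowledgment to McNinch and the reference provided by Totaro enter) that under $p > 2h-2$ the centralizer/normalizer subgroup schemes of subspaces of $\fg$ are smooth, or equivalently that the adjoint orbit map on such closed orbits is separable. I would therefore aim to reduce the theorem to exactly such a citable smoothness-of-stabilizer statement, keeping the bulk of the proof the identification $(d\phi_\epsilon)_e \leftrightarrow \ad$ and the reductivity consequence of closedness, and quarantining the delicate characteristic-$p$ input into a single invocation of the good-prime hypothesis.
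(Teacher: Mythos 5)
Your general framing is the right one: separability of $\phi_\epsilon$ is equivalent to surjectivity of $(d\phi_\epsilon)_e$, equivalently to the infinitesimal stabilizer $\mathfrak{n}_\fg(\epsilon)=\{x\in\fg : [x,\epsilon]\subseteq\epsilon\}$ being no larger than $\Lie$ of the reduced stabilizer. But there are two genuine problems. First, your ``reduction via closedness'' draws the wrong structural conclusion. Matsushima's criterion concerns orbits closed in an \emph{affine} $G$-variety, where closedness forces $G/H$ affine and hence $H$ reductive. Here the orbit is closed in the \emph{projective} variety $\Grass(r,\fg)$, so $G/H$ is proper as well as quasi-projective, hence projective, and the correct conclusion is that the reduced stabilizer is a \emph{parabolic} subgroup $P_\epsilon=G\cap P$ (where $P\subset\GL(\fg)$ is the $\GL(\fg)$-stabilizer of $\epsilon$). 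The paper uses this parabolicity in an essential way: it decomposes $\fg=\fu_\epsilon^-\oplus\fp_\epsilon$ and reduces the problem to showing $\fu_\epsilon^-\cap\ker(d\phi_\epsilon)_1=0$. An argument built on reductivity of $H$ cannot be completed, because $H$ is not reductive.

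Second, the step you flag as the ``main obstacle'' is left entirely to a hoped-for citation, and the citation you gesture at (smoothness of normalizer schemes of arbitrary subspaces under $p>2h-2$) is not the result actually available or used. The mechanism in the paper is different and more concrete: any $X\in\fu_\epsilon^-\cap\ker(d\phi_\epsilon)_1$ lies in $\Lie(P)$ because the $\GL(\fg)$-orbit map $\psi_\epsilon:\GL(\fg)\to\Grass(r,\fg)$ is a locally trivial $P$-torsor, hence separable with $\ker(d\psi_\epsilon)_1=\Lie(P)$; then one invokes the fact that both $P\hookrightarrow\GL(\fg)$ and (by McNinch, for $G$ of adjoint type with $p>2h-2$) $\Ad:G\to\GL(\fg)$ are embeddings of \emph{exponential type}, so the one-parameter subgroup $\exp_X$ factors through $G\cap P=P_\epsilon$, giving $X=d(\exp_X)(1)\in\fp_\epsilon$ and hence $X=0$. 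This exponential-type argument --- together with the preliminary reduction to adjoint form, which also uses $p>2h-2$ to make $G\to\Ad(G)$ separable --- is the actual content of the hypothesis on $p$, and it is absent from your outline. Without it, and with the reductivity claim corrected to parabolicity, your proposal does not yet constitute a proof.
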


\begin{proof}
Since $p > 2h-2$, the prime $p$ does not divide the order of the finite covering $G \to \Ad(G)$ of $G$ 
over its adjoint form (see \cite[Planche I - X, VI]{Bour}) and thus $p$ does not divide the degree of
the field extension $k(G)/k(\Ad(G))$; consequently, this covering map is separable.
Since $G \cdot \epsilon = \Ad(G)\cdot \epsilon$, we may assume that $G = \Ad(G)$, and, hence,   
that the map $\Ad: G \to \GL(\fg)$ is injective.

The orbit map $\psi_\epsilon: \GL(\fg) \to \Grass(r,\fg)$ is locally trivial (see, for example,
\cite[II.1.10]{Jan}); indeed, this orbit map is a $P$-torsor, where $P \subset \GL(\fg)$
is a standard parabolic subgroup identified here as the stabilizer of $\epsilon$ with respect to
the action of $\GL(\fg)$ on $\Grass(r,\fg)$ given by left multiplication.
We consider the following commutative diagram 
 \begin{equation}
 \label{sq}
\xymatrix{  
P_\epsilon \ar[r]  \ar@{^(->}[d]   & G  \ar[r]^{\phi_\epsilon} \ar@{^(->}[d]^{Ad} & G \cdot \epsilon  \ar@{^(->}[d] \\
P \ar[r] & \GL(\fg) \ar[r]^{\psi_\epsilon} & \Grass(r, \fg)
} \end{equation}
\noindent
where $P_\epsilon \subset G$ is the (reduced) stabilizer of $\epsilon$.

We recall that $G/P_\epsilon$ is quasi-projective since $P_\epsilon$ is a closed 
(reduced) subgroup of $G$ (see \cite[II.6.8]{Bo}).  On the other hand,  $G /P_\epsilon \to G \cdot \epsilon$
is proper (in fact, finite),  and $G\cdot \epsilon \subset \Grass(r,\fg)$ is assumed to be closed and thus
proper over $k$, so that  $G/P_\epsilon$ is proper over $k$ as well as quasi-projective.  Thus,
 $G /P_\epsilon$ is projective which means that   $P_\epsilon \subset G$ is a parabolic subgroup.  
 By construction, $P_\epsilon \ = \ G \cap P$.

To prove that the orbit map $\phi_\epsilon: G \to G\cdot \epsilon$ is separable 
it suffices to show that the tangent map $d\phi_\epsilon$ at the identity 
is surjective (see \cite[4.3.7]{Sp}).  Let $\fp_\epsilon = \Lie P_\epsilon$, and  
let $\fg = \fu^-_\epsilon \oplus \fp_\epsilon$.    We proceed to prove that 
the $k$-linear map of vector spaces
\begin{equation}
\label{eq:comp}
\xymatrix{\fu_\epsilon^- \ar@{^(->}[r] &  \fg \ar[r]^-{(d\phi_\epsilon)_1} & \bT_{\epsilon}(G\cdot \epsilon) } 
\end{equation} 
is injective and thus by dimension reasons an isomorphism.  This will imply the subjectivity
of $d\phi_\epsilon$ at the identity.

Since $\psi_\epsilon$ is separable, $\ker(d\psi_\epsilon)_1 = \Lie(P)$.    Suppose that
$X \in \fu_\epsilon^- \cap \ker(d\phi_\epsilon)_1$.  The commutativity
of \eqref{sq} and the separability of $\psi_\epsilon$ implies that such an $X$ must be in $\Lie(P)$. 
We recall that a subgroup $H \subset \GL_N$  of exponential type in the sense of \cite{SFB1}
has the property that for any $p$-nilpotent $Y \in \Lie(H)$ the 1-parameter subgroup
$\exp_Y: \bG_a \to \GL_N$ factors through $H$.  As essentially observed in \cite{SFB1}, 
 $P \hookrightarrow \GL(\fg)$ is an embedding of exponential type.  Moreover,   
as verified in \cite[7.4.1]{McN}  for $G$ simple of adjoint type and $p > 2h-2$, 
the embedding  $\Ad: G \to \GL(\fg)$ is also of exponential type.  Thus, 
$\exp_X: \bG_a \to \GL(\fg)$ factors (uniquely) via some 1-parameter subgroup
$e_X: \bG_a \ \to \ G \cap P = P_\epsilon$.  This
implies that $X = d(\exp_X)(1) = d(e_X)(1) \in \fp_\epsilon$.  Since $X \in u_\epsilon^-$, 
we conclude $X=0$.  In other words, (\ref{eq:comp}) is injective.
\end{proof}

\begin{cor}
\label{cor:gln} 
Assume $p>2n-2$.  Then for any  $r$-dimensional subspace  $\epsilon$ of  $\gl_n$ whose (adjoint) orbit 
$\GL_n \cdot \epsilon$ is closed in $\Grass(r,\gl_n)$, the orbit map $\GL_n \to \GL_n \cdot \epsilon$ is separable.
\end{cor}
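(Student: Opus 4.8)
The plan is to deduce the corollary by transferring everything to the adjoint quotient $\PGL_n$, which is simple of type $A_{n-1}$ and therefore has Coxeter number $h=n$, so that the hypothesis $p>2n-2$ is precisely the inequality $p>2h-2$ of Theorem \ref{thm:sep}. The case $n=1$ is trivial, since $\GL_1=\bG_m$ acts trivially on $\gl_1$ and every orbit is a point. For $n\geq 2$ I would first record the elementary numerical fact that $p>2n-2$ forces $p\geq 2n-1>n$, so in particular $p\nmid n$; this is used twice below.

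The reduction to $\PGL_n$ runs as follows. The conjugation action of $\GL_n$ on $\gl_n$ is trivial on the central subgroup $Z\cong\bG_m$ of scalar matrices, hence factors through the quotient map $q\colon \GL_n\to\PGL_n=\GL_n/Z$, and the induced representation $\PGL_n\hookrightarrow\GL(\gl_n)$ is faithful. Since $Z$ is smooth, $q$ is a smooth surjection, in particular separable. Because $Z$ acts trivially we have an equality of orbits $\GL_n\cdot\epsilon=\PGL_n\cdot\epsilon$ in $\Grass(r,\gl_n)$, and the orbit map factors as $\phi_\epsilon=\bar\phi_\epsilon\circ q$ with $\bar\phi_\epsilon\colon\PGL_n\to\PGL_n\cdot\epsilon$. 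As the composite of dominant separable morphisms is separable, it suffices to prove that $\bar\phi_\epsilon$ is separable; moreover the hypothesis that $\GL_n\cdot\epsilon$ is closed in $\Grass(r,\gl_n)$ is exactly the statement that $\PGL_n\cdot\epsilon$ is closed there.

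I would then rerun the proof of Theorem \ref{thm:sep} with $G=\PGL_n$, keeping in mind that the two roles played by $\fg$ in that proof are now distinct: the Grassmannian and the ambient $\GL(-)$ are built on the conjugation representation $\gl_n$, whereas the source of the tangent map is $\Lie(\PGL_n)=\mathfrak{pgl}_n$. With this bookkeeping, every ingredient is again formal: the map $\psi_\epsilon\colon\GL(\gl_n)\to\Grass(r,\gl_n)$ is a $P$-torsor and hence separable; the stabilizer $P_\epsilon=\PGL_n\cap P$ (with $P\subset\GL(\gl_n)$ the stabilizer of $\epsilon$) is parabolic because $\PGL_n\cdot\epsilon$ is closed and so $\PGL_n/P_\epsilon$ is projective; and separability of $\bar\phi_\epsilon$ reduces, as before, to injectivity of $\fu_\epsilon^-\hookrightarrow\mathfrak{pgl}_n\to\bT_\epsilon(\PGL_n\cdot\epsilon)$, where the second arrow is $(d\bar\phi_\epsilon)_1$ and $\mathfrak{pgl}_n=\fu_\epsilon^-\oplus\fp_\epsilon$. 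The exponential type of the parabolic $P\hookrightarrow\GL(\gl_n)$ is standard, exactly as in the original argument.

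The single step that is not purely formal, and the one I expect to be the main obstacle, is the remaining input that the embedding $\PGL_n\hookrightarrow\GL(\gl_n)$ is of exponential type: McNinch's result \cite[7.4.1]{McN} is stated for the adjoint representation $\Ad\colon G\to\GL(\Lie G)$, whereas here $\PGL_n$ acts on the strictly larger space $\gl_n\supsetneq\mathfrak{pgl}_n$. This is where I would again invoke $p\nmid n$: the conjugation representation then splits as $\gl_n=k\cdot I\oplus\fsl_n$ with $k\cdot I$ a trivial summand and $\fsl_n\cong\mathfrak{pgl}_n$ the adjoint representation. A $p$-nilpotent element of $\mathfrak{pgl}_n$ therefore acts on $\gl_n$ as $0\oplus\ad$, so its one-parameter subgroup in $\GL(\gl_n)$ is $\id\oplus\Ad(\exp(-))$ and factors through the image of $\PGL_n$; that is, $\PGL_n\hookrightarrow\GL(\gl_n)$ is of exponential type, being McNinch's embedding on the summand $\fsl_n\cong\mathfrak{pgl}_n$ and trivial on $k\cdot I$. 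With this in hand the tangent-space injectivity argument goes through unchanged, yielding separability of $\bar\phi_\epsilon$ and hence of $\phi_\epsilon$.
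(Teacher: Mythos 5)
Your proof is correct and follows essentially the same route as the paper, which simply observes that $\GL_n \to \PGL_n$ is a $\bG_m$-torsor and invokes Theorem~\ref{thm:sep}. You are in fact more scrupulous than the paper on the one point its one-line proof elides --- namely that the representation in play is $\gl_n$ rather than $\Lie(\PGL_n) = \mathfrak{pgl}_n$, so Theorem~\ref{thm:sep} does not apply verbatim --- and your splitting $\gl_n = k\cdot I \oplus \fsl_n$ (using $p \nmid n$) to verify that $\PGL_n \hookrightarrow \GL(\gl_n)$ is of exponential type correctly supplies the missing detail.
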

\begin{proof}
Since the standard map $\GL_n \to \PGL_n$  is a $\bG_m$-torsor, the separability 
of the orbit map for $\GL_n$ follows immediately from Theorem~\ref{thm:sep}.
\end{proof}

The following proposition, a generalization of 
\cite[7.9]{CFP2}, enables us to identify  kernel bundles
provided we know corresponding image bundles and vice versa.

\begin{prop}
\label{dual}
Retain the notation and hypotheses of Theorem 
\ref{thm:functor}.  Then there is a natural short
exact sequence of vector bundles on 
$G/H \simeq X \subset \bE(r,\fg)$
\begin{equation}
\label{duality}
\xymatrix{
0 \ar[r]& \cKer^{j,X}(M^\#)\ar[r]& (M^\# )\otimes \cO_X 
\ar[r]&  (\cIm^{j,X}(M))^\vee \ar[r]& 0.
}
\end{equation}
\end{prop}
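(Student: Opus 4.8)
The plan is to prove the statement fiberwise, that is, at the level of $H$-modules, and then transport it to $X \simeq G/H$ using that the induction functor $\cL_{G/H}$ of Proposition \ref{prop:cL} is exact and sends $k$-linear duals $(-)^\#$ of $H$-modules to duals $(-)^\vee$ of the associated vector bundles (Proposition \ref{prop:cL}(3)). Concretely, by Theorem \ref{thm:functor} the three terms of \eqref{duality} are the values of $\cL_{G/H}$ on the $H$-modules $\Soc^j(\epsilon^*(M^\#))$, $M^\#$, and $(\Rad^j(\epsilon^*M))^\#$ respectively; so it will suffice to exhibit a natural short exact sequence of $H$-modules
\begin{equation*}
0 \ \to \ \Soc^j(\epsilon^*(M^\#)) \ \to \ M^\# \ \to \ (\Rad^j(\epsilon^*M))^\# \ \to \ 0
\end{equation*}
and then apply $\cL_{G/H}$ termwise.

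To construct this sequence I would set $N = \epsilon^*M$ and regard it as an $H$-equivariant module over the restricted enveloping algebra $\fu(\epsilon)$: since $H$ is the (reduced) stabilizer of $\epsilon$, it acts by automorphisms of $\epsilon$, hence of $\fu(\epsilon)$, compatibly with the action on $N$ inherited from $M$. Because $\epsilon$ is elementary, $\fu(\epsilon)$ is a commutative local Frobenius algebra with maximal ideal $\m$, and $\Rad^j N = \m^j N$ is an $H$-stable submodule. Dualizing the tautological $H$-equivariant sequence
\begin{equation*}
0 \ \to \ \Rad^j N \ \to \ N \ \to \ N/\Rad^j N \ \to \ 0
\end{equation*}
over $k$ produces the $H$-equivariant sequence $0 \to (N/\Rad^j N)^\# \to N^\# \to (\Rad^j N)^\# \to 0$. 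I would then identify $N^\# = \epsilon^*(M^\#)$, whose underlying $H$-module is $M^\#$, together with the socle--radical duality $(N/\Rad^j N)^\# \simeq \Soc^j(N^\#)$ for $\fu(\epsilon)$: since the antipode fixes $\m$ (it negates the primitive generators of $\m$), a functional is annihilated by $\m^j$ exactly when it vanishes on $\Rad^j N$, so $\Soc^j(N^\#) = (\Rad^j N)^\perp$, and the evaluation pairing supplies the canonical isomorphism $(\Rad^j N)^\perp \simeq (N/\Rad^j N)^\#$.

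The one point demanding care---the main obstacle---is keeping each of these identifications genuinely $H$-equivariant rather than merely $k$-linear, above all the socle--radical duality isomorphism, since this equivariance is precisely what permits the final step to be carried out inside the equivalence of categories of Proposition \ref{prop:cL}(1). Granting that, applying the exact functor $\cL_{G/H}$ to the displayed sequence of $H$-modules yields \eqref{duality}, with naturality in $M$ inherited from the functoriality of $\Rad^j$, $\Soc^j$, $k$-duality, and $\cL_{G/H}$.
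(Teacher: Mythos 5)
Your proposal is correct and follows the same route as the paper: the paper likewise reduces to the short exact sequence of $H$-modules $0 \to \Soc^j(\epsilon^*(M^\#)) \to M^\# \to (\Rad^j(\epsilon^*M))^\# \to 0$ (which it simply cites from \cite[2.2]{CFP2} rather than deriving via the socle--radical duality as you do), then applies the exact functor $\cL_{G/H}$ and identifies the terms via Theorem \ref{thm:functor} and Proposition \ref{prop:cL}. Your explicit verification of the $H$-equivariant duality $\Soc^j(N^\#) \simeq (N/\Rad^j N)^\#$ is a correct filling-in of that cited lemma.
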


\begin{proof}
The proof is a repetition of that of \cite[7.9]{CFP2}.   
By \cite[2.2]{CFP2}, the sequence
\begin{equation}
\label{duality1}
\xymatrix{
0 \ar[r]& \Soc^j(\epsilon^*(M^\#)) \ar[r]& M^\# 
\ar[r]& (\Rad^j(\epsilon^*M))^\# \ar[r]& 0.
} 
\end{equation}
is an exact sequence of $H$-modules.  
Applying the functor $\cL$ to (\ref{duality1}) (which 
preserves exactness by Proposition \ref{prop:cL})
 and appealing to Theorem \ref{thm:functor},
we conclude the exactness of (\ref{duality}).
\end{proof}

In the next proposition we remind the 
reader of some standard constructions 
of bundles using the operator $\cL$ in addition to
 $\gamma$ and $\delta$ mentioned in Example \ref{ex:standard}. 

\begin{prop}
\label{tan-cot} 
Let $G$ be a reductive algebraic group and  
let $P$ be a standard parabolic subgroup. 
Set $\fg = \Lie(G)$, $\fp = \Lie(P)$, and  
let $\fu$ be the nilpotent radical of $\fp$. 
\begin{enumerate} 
\item The tangent bundle of $G/P$ is 
isomorphic to $\cL_{G/P}(\fg/\fp)$,
$$\bT_{G/P} \ \simeq \ \cL_{G/P}(\fg/\fp). $$
\item Assume that $\fg$ has a nondegenerate 
$G$-invariant symmetric bilinear form 
(such as the Killing form). 
Then the cotangent bundle of $G/P$ is 
isomorphic to $\cL_{G/P}(\fu)$:
\[
\Omega_{G/P} = \bT^\vee_{G/P} \ \simeq  \ \cL_{G/P}(\fu),
\]
where $\fu$ is viewed as $P$-module via the 
restriction of the adjoint action of $P$  on $\fp$.  
\item For $G = \SL_n$ with $p \nmid n $, 
$P = P_{r, n-r}$, and $X = G/P = \Grass(r,n)$, we have 
\[
\bT_X \simeq \gamma_r^\vee \otimes \delta_{n-r}^\vee,
\quad \Omega_X \simeq \gamma_r \otimes \delta_{n-r}.
\] 
\item For $G=\Sp_{2n}, P = P_{\alpha_n}$, 
and $Y = G/P = \LG(n, V)$, we have
\[
\bT_Y \ \simeq \ \cL_{G/P}(\fg/\fp) \ 
\simeq \ S^2(\gamma_n^\vee).
\] 
Moreover, if $p>3$, then 
\[\Omega_Y \ \simeq  \ S^2(\gamma_n).\] 
\end{enumerate} 
\end{prop}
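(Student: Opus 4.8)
The plan is to reduce the entire proposition to purely representation-theoretic identifications of the fibers of the relevant bundles over the base coset $eP \in G/P$, and then to invoke the dictionary between $P$-modules and $G$-equivariant bundles furnished by Proposition \ref{prop:cL} together with the fiber-recognition criterion of Proposition \ref{prop:cL2}. Concretely, every bundle appearing in the statement is $G$-equivariant, so by Proposition \ref{prop:cL2} it is isomorphic to $\cL_{G/P}(F)$, where $F$ is its fiber over $eP$ equipped with its natural $P$-action. The task is thus to compute these fibers as $P$-modules and to recognize them among $\fg/\fp$, $\fu$, and their symmetric and tensor powers, after which parts (3) and (4) follow because $\cL_{G/P}(-)$ commutes with tensor, symmetric powers, and duals by Proposition \ref{prop:cL}.(3).

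For part (1) I would first observe that the projection $\pi\colon G \to G/P$ has differential at the identity $d\pi_1\colon \fg \to \bT_{eP}(G/P)$ with kernel exactly $\fp = \Lie(P)$, so that the fiber of the tangent bundle at $eP$ is $\fg/\fp$. The stabilizer $P$ acts on this fiber through the adjoint action, and Proposition \ref{prop:cL2} yields at once $\bT_{G/P} \simeq \cL_{G/P}(\fg/\fp)$. For part (2) I would simply dualize: $\Omega_{G/P} = \bT_{G/P}^\vee \simeq \cL_{G/P}(\fg/\fp)^\vee \simeq \cL_{G/P}((\fg/\fp)^\#)$. It then remains to identify $(\fg/\fp)^\#$ with $\fu$ as $P$-modules. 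Here the nondegenerate $G$-invariant form $\beta$ on $\fg$ enters: it furnishes a $P$-equivariant isomorphism $\fg \simeq \fg^\#$ under which the annihilator $(\fg/\fp)^\# \subset \fg^\#$ corresponds to the $\beta$-orthogonal complement $\fp^\perp$ of $\fp$, and a computation with the grading defined by $P$ (the form pairs $\fu$ with the opposite radical $\fu^-$ and is nondegenerate on the Levi) identifies $\fp^\perp$ with the nilpotent radical $\fu$. This gives $(\fg/\fp)^\# \simeq \fu$ and hence $\Omega_{G/P} \simeq \cL_{G/P}(\fu)$.

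Parts (3) and (4) are then applications in which one makes the $P$-module $\fg/\fp$ (equivalently $\fu$) explicit. For $G = \SL_n$ and $P = P_{r,n-r}$, the quotient $\fg/\fp$ is represented by the lower-left block, which as a module for the Levi $\GL_r \times \GL_{n-r}$ is $\Hom(W, V/W) \simeq W^\# \otimes (V/W)$; applying $\cL_{G/P}$ and inserting the identifications $\cL_{G/P}(W) = \gamma_r$ and $\cL_{G/P}((V/W)^\#) = \delta_{n-r}$ from Example \ref{ex:standard} gives $\bT_X \simeq \gamma_r^\vee \otimes \delta_{n-r}^\vee$, and dualizing gives $\Omega_X \simeq \gamma_r \otimes \delta_{n-r}$. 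For $G = \Sp_{2n}$ with $P$ the parabolic stabilizing a Lagrangian $W$, the Levi is $\GL(W) \simeq \GL_n$, the (abelian) nilpotent radical consists of the symmetric blocks and is isomorphic to $S^2(W)$, and dually $\fg/\fp \simeq S^2(W^\#)$; since $\cL_{G/P}(W) = \gamma_n$, part (1) gives $\bT_Y \simeq \cL_{G/P}(S^2(W^\#)) \simeq S^2(\gamma_n^\vee)$, while part (2) gives $\Omega_Y \simeq \cL_{G/P}(S^2(W)) \simeq S^2(\gamma_n)$.

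The main obstacle, and the source of the characteristic hypotheses, is the identification $(\fg/\fp)^\# \simeq \fu$, which rests on the existence of a nondegenerate $G$-invariant form on $\fg$: for $\fsl_n$ the trace form degenerates exactly when $p \mid n$, which is why $p \nmid n$ is imposed in (3), and for $\fsp_{2n}$ one must likewise guarantee nondegeneracy, accounting for the hypothesis $p > 3$ in (4). A secondary subtlety is that matching $\Omega_Y$ with $S^2(\gamma_n)$ uses $(S^2 E)^\vee \simeq S^2(E^\vee)$, which requires symmetric powers to coincide with divided powers, i.e.\ $2$ to be invertible; before applying $\cL_{G/P}$ I would also verify that the module identifications above are genuinely $P$-equivariant and not merely $L$-equivariant, which here is immediate since the unipotent radical acts trivially on the abelian radical $\fu$ and on the quotient $\fg/\fp$.
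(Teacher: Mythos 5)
Your proposal is correct and follows essentially the same route as the paper: part (1) is the standard identification $\bT_{G/P}\simeq\cL_{G/P}(\fg/\fp)$ (which the paper simply cites from Jantzen, while you sketch the fiber computation at $eP$), part (2) is the duality $(\fg/\fp)^{\#}\simeq\fp^{\perp}=\fu$ via the invariant form, and parts (3) and (4) are the same block-matrix identifications $\fg/\fp\simeq W^{\#}\otimes(V/W)$ and $\fg/\fp\simeq S^2(W^{\#})$ followed by applying $\cL_{G/P}$. Your added remarks on where the hypotheses $p\nmid n$ and $p>3$ enter match the paper's use of the trace form on $\fsl_n$ and Seligman's nondegenerate form on $\fsp_{2n}$.
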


\begin{proof}
(1) See \cite[II.6.1]{Jan}.

(2) This follows from (1) together with the 
isomorphism of $P$-modules $(\fg/\fp)^\# \simeq \fu$, 
guaranteed by the existence of a nondegenerate form.  

(3) We have $\fg = \End(V)$. Let $e_1, \ldots, e_n$ 
be a  basis of $V$, and choose a linear 
splitting of the sequence $ \xymatrix{0 \ar[r]&  W \ar[r]&  
V \ar[r]& V/W \ar[r]&  0}$ sending $V/W$ 
to the subspace generated by $e_{r+1}, \ldots, e_n$ 
(see notation introduced in Example~\ref{ex:standard}). 
We have 
$$
\End(V) = \Hom(W, V/W)\oplus \Hom(V/W, W) \oplus  
\Hom(W, W) \oplus  \Hom(V/W, V/W),
$$ 
where the sum of the last three summands is a 
$P$-stable subspace isomorphic to $\fp$. Hence, 
we have an isomorphism of $P$-modules: 
$\fg/\fp \simeq \Hom(W, V/W) \simeq W^\# \otimes V/W$.  
Therefore, 
\[
\bT_X \simeq \cL_{G/P}(\fg/\fp) \simeq \cL(W^\#) 
\otimes \cL(V/W) = \gamma_r^\vee \otimes \delta_{n-r}^\vee.
\]
Consequently, 
\[\Omega_X \simeq \gamma_r \otimes \delta_{n-r},\]
since the Killing form  is 
nondegenerate on $\fsl_n$ for $p \not | n$. 

(4). In this case, $W$ is an isotropic 
subspace of $V$, and  $W \simeq (V/W)^\#$.   
Then  $\fg/\fp \simeq \Hom_{Sym}(W, V/W) 
\simeq S^2(W^\#)$. Hence,  
\[\bT_Y \simeq \cL_{G/P}(\fg/\fp) \simeq 
S^2(\gamma_n^\vee).\]
If $p>3$, there exists a non-degenerate 
$\Sp_{2n}$-invariant form   on $\fg$ (see \cite{Sel}). Hence,  
we can  dualize to obtain the last asserted isomorphism. 
\end{proof}

%%%%%%%%%%%%%%%%%%%%%%%%%%%%%
%%%%Section on explicit example of bundles on $G$-orbits%%%%%%%%%

\section{Vector bundles on $G$-orbits of $\bE(r, \fg)$: examples}
\label{sec:examples}
We now work out some specific examples of 
vector bundles on $G$-orbits of $\bE(r, \fg)$ associated to $\fg$-modules. 
Our first example is for $\GL_n$-orbits of $\bE(m, \gl_n)$.

In the following two propositions we make an assumption that 
$\GL_n \cdot \fu_{r, n-r} \equiv \Grass(r, V)$. Since $P_{r, n-r}$ 
is the reduced stabilizer of $\fu_{r, n-r}$, this assumption is equivalent 
to the separability of the orbit map  $\GL_n \to \GL_n \cdot \fu_{r, n-r}$.  
Hence, it is satisfied for $p>2n-2$ by Theorem~\ref{thm:sep}.

\begin{prop}
\label{prop:can-gen}
Let $G = \GL(V) \ \simeq \ \GL_n$ for some $n \geq 2$, and set 
$\epsilon = \fu_{r, n-r} \in \bE({r(n-r)}, \gl_n)$,
the subalgebra of all matrices with nonzero entries
only in the top $r$ rows and right-most $n-r$
columns,  for some $r<n$.  Assume that $p > 2n-2$, so that by
Corollary \ref{cor:gln} the orbit map $\phi_\epsilon: \GL_n \to \GL_n \cdot \fu_{r, n-r} \equiv X$
is separable.  Thus,  $\phi_\epsilon$ is isomorphic to the $P$-torsor
$\GL_n \to \Grass(r,n)$, where
$P = P_{r,n-r}$ is the standard parabolic of type $(r,n-r)$.

We have the following isomorphisms of 
algebraic vector bundles on $X$:
\begin{enumerate} 
\item 
$\cIm^{1,X}(V) \ \simeq  \ \cKer^{1,X}(V) \ \simeq \ \gamma_r$, \\
$ \cIm^{j,X}(V) = 0 {\text \ for \ } j > 1.$\\[-8pt] 
\item $ \cCoker^{1,X}(V) \simeq \delta_{n-r}^\vee$,\\ 
$\cCoker^{j,X}(V) = 0 {\text \ for \ } j > 1.
$\\[-8pt]
\item $\cKer^{1,X}(\Lambda^{n-1}(V)) \simeq 
\cIm^X(\Lambda^{n-1}(V)) \simeq \delta_{n-r}^\vee,$\\ 
$\cIm^{j,X}(\Lambda^{n-1}(V)) = 0 {\text \ for \ } j > 1. $ 
\end{enumerate}
\end{prop}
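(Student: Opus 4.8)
The plan is to pass everything through Theorem~\ref{thm:functor}, which reduces the computation of these bundles to determining the radical and socle series of $\epsilon^*M$ as modules over the reduced stabilizer $H = P_{r,n-r}$. Concretely, since the orbit map is separable we have $X \simeq G/P$ with $P = P_{r,n-r}$, and Theorem~\ref{thm:functor} gives $\cIm^{j,X}(M) \simeq \cL_{G/P}(\Rad^j(\epsilon^*M))$ and $\cKer^{j,X}(M) \simeq \cL_{G/P}(\Soc^j(\epsilon^*M))$; exactness of $\cL_{G/P}$ (Proposition~\ref{prop:cL}) then identifies $\cCoker^{j,X}(M) \simeq \cL_{G/P}(M/\Rad^j(\epsilon^*M))$. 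I would read off the answers from the dictionary of Example~\ref{ex:standard}: $\cL_{G/P}(W) \simeq \gamma_r$, $\cL_{G/P}(V/W) \simeq \delta_{n-r}^\vee$, and $\cL_{G/P}((V/W)^\#) \simeq \delta_{n-r}$, where $W = \langle e_1,\dots,e_r\rangle$ is the $r$-plane stabilized by $P$.

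The computation rests on one structural observation: $\epsilon = \fu_{r,n-r}$ is precisely $\Hom(V/W,W) \subset \End(V)$, so every $X \in \epsilon$ maps $V$ into $W$ and annihilates $W$; in particular $XY = 0$ for all $X,Y \in \epsilon$. Hence any product $\Theta_1^{j_1}\cdots\Theta_r^{j_r}$ of total degree $j \ge 2$ acts as zero on $V$, giving $\Rad^j(\epsilon^*V) = 0$ and therefore $\cIm^{j,X}(V) = 0$ for $j>1$. For $j=1$ I would check that $\Rad^1(\epsilon^*V) = \epsilon\cdot V = W$ (the top-right block realizes every linear map $V/W \to W$, so its image is all of $W$) and that $\Soc^1(\epsilon^*V) = \{v : \epsilon\cdot v = 0\} = W$ (a vector is annihilated by all of $\epsilon$ exactly when its image in $V/W$ vanishes). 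Both equal the $P$-submodule $W$, so $\cIm^{1,X}(V) \simeq \cKer^{1,X}(V) \simeq \cL_{G/P}(W) \simeq \gamma_r$, which is (1); and $\cCoker^{1,X}(V) = (V\otimes\cO_X)/\gamma_r \simeq \delta_{n-r}^\vee$ then follows from the tautological sequence $0 \to \gamma_r \to V\otimes\cO_X \to \delta_{n-r}^\vee \to 0$ of Example~\ref{ex:standard}, giving (2).

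For (3) I would run the same analysis on $\Lambda^{n-1}(V) \simeq V^\# \otimes \det$. Writing this module in the basis $f_i = e_1\wedge\cdots\wedge\widehat{e_i}\wedge\cdots\wedge e_n$, a direct check shows that a basis vector $E_{a,b}$ of $\epsilon$ (with $a \le r < b$) sends $f_a \mapsto \pm f_b$ and annihilates every other $f_i$; in particular $\epsilon$ again squares to zero, so $\Rad^j = 0$ and $\cIm^{j,X}(\Lambda^{n-1}V) = 0$ for $j>1$. One finds $\Rad^1(\epsilon^*\Lambda^{n-1}V) = \Soc^1(\epsilon^*\Lambda^{n-1}V) = \langle f_{r+1},\dots,f_n\rangle$, an $(n-r)$-dimensional $P$-submodule which under $\Lambda^{n-1}V \simeq V^\#\otimes\det$ corresponds to $W^\perp\otimes\det \simeq (V/W)^\#\otimes\det$. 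Since $\det$ is the restriction of a $G$-character, Proposition~\ref{prop:cL}(2) makes $\cL_{G/P}(\det)$ a trivial line bundle, so the twist drops out and $\cKer^{1,X}(\Lambda^{n-1}V) \simeq \cIm^{1,X}(\Lambda^{n-1}V) \simeq \cL_{G/P}((V/W)^\#) \simeq \delta_{n-r}$ (the canonical rank $n-r$ subbundle). As an independent check one can instead feed $M = V\otimes\det^{-1}$, for which $\cIm^{1,X}(M) \simeq \gamma_r$, into the duality sequence of Proposition~\ref{dual}, recovering the same rank-$(n-r)$ subbundle.

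The only genuinely delicate step is this last one: keeping track of the full $P$-module structure (not merely the $\epsilon$-module structure) of the socles and radicals, and in particular handling the determinant twist that appears for $\Lambda^{n-1}(V)$. The identification with a standard bundle goes through only because $\cL_{G/P}(\det)$ is trivial, which is also why the $n=2$, $r=1$ case (where $\Lambda^{n-1}V = V$) is forced to agree with part (1). Everything else is the formal reduction through Theorem~\ref{thm:functor} together with the bundle dictionary of Example~\ref{ex:standard}.
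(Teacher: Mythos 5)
Your treatment of parts (1) and (2) is essentially the paper's: identify $\Rad(\epsilon^*V)=\Soc(\epsilon^*V)=W$ as $P$-modules, pass through Theorem \ref{thm:functor}, and read off the cokernel from the tautological sequence of Example \ref{ex:standard}; you simply make explicit the block computation the paper leaves implicit. For part (3) you take a genuinely different route. The paper uses $p>n$ to split $\det$, regards $X$ as an $\SL_n$-orbit, identifies $\Lambda^{n-1}V\simeq V^{\#}$ as $\SL_n$-modules, and then quotes the duality sequence of Proposition \ref{dual} with $M=V$; you instead compute $\Rad$ and $\Soc$ of $\epsilon^{*}\Lambda^{n-1}(V)$ directly in the basis $f_i$, identify the answer as the $P$-module $(V/W)^{\#}\otimes\det$, and kill the twist using Proposition \ref{prop:cL}(2), keeping the duality argument only as a cross-check. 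Your version is more self-contained (no passage to $\SL_n$) and makes the determinant twist visible; the paper's is shorter but hides that twist inside the $\SL_n$-identification. Both are correct, and it is worth flagging that your computation lands on $\delta_{n-r}$, the canonical rank $n-r$ subbundle $\cL_{G/P}((V/W)^{\#})\subset V^{\#}\otimes\cO_X$ of Example \ref{ex:standard}, whereas the printed statement asserts $\delta_{n-r}^{\vee}$. Your answer is the one consistent with the paper's own conventions: running the paper's proof, Proposition \ref{dual} exhibits $\cKer^{1,X}(V^{\#})$ as the kernel of $V^{\#}\otimes\cO_X\to\gamma_r^{\vee}$, which is $\delta_{n-r}$ and not its dual, so the discrepancy is a slip in the statement rather than in your argument. (Neither you nor the paper addresses the claim $\cCoker^{j,X}(V)=0$ for $j>1$, which as literally written is inconsistent with the definition $\cCoker^{j,X}(V)=(V\otimes\cO_X)/\cIm^{j,X}(V)$ once $\cIm^{j,X}(V)=0$; this too appears to be a slip in the statement.)
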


\begin{proof}
Choose a basis $e_1, \ldots, e_n$ for $V$
so that both $\Rad(\epsilon^*V)$ and $\Soc(\epsilon^*M)$ 
are  the subspace $W \subset V$ 
spanned by $e_1, \ldots, e_r$. That is, 
$\Rad(\epsilon^*V) = \Soc(\epsilon^*M) = W$ 
as $P_{r,n-r}$-modules 
in the notation  of Example~\ref{ex:standard}. 
Hence, Theorem~\ref{thm:functor} implies that
\[\cIm^{1,X}(V) \simeq  \cL_{G/P}(W) = \gamma_r\quad
\text{ and } \quad 
\cKer^{1,X}(V) \simeq  \cL_{G/P}(W) = \gamma_r.\]
This proves the first part of (1).  The vanishing 
$\cIm^{j,X}(V) =0$ follows immediately from the 
fact that $\Rad^j(\epsilon^*(V))=0$ for $j \geq 2$.  

Part (2) follows from the exactness of (\ref{deltagamma}).
To prove part (3), observe that $\det: \GL_n \to \GL_1$ splits
because $p > n$, so that we may view $\bE(r,\gl_n) = \bE(r,\fsl_n)$
as the orbit of $\epsilon$ under $\SL_n$.  Since
 $\Lambda^{n-1}V \ \simeq \  V^\#$ as $\SL_n$-modules, 
 part (3) follows from  Proposition~\ref{dual}.
\end{proof}

\begin{prop} \label{prop:can-gen2}
We retain the hypotheses and notation of 
Proposition \ref{prop:can-gen}.
For any positive integer $m \leq n-r$, 
\begin{enumerate}
\item
$\cIm^{m,X}(V^{\otimes m}) \ = \ \gamma_r^{\otimes m}$,
\item
$\cIm^{m,X}(S^m(V)) \ = \ S^m(\gamma_r),$
\item 
$\cIm^{m,X}(\Lambda^m(V))\ = \ \Lambda^m(\gamma_r).$
\end{enumerate}
\end{prop}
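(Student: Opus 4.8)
The plan is to reduce all three identities to a single radical computation and then invoke the functoriality of $\cL_{G/P}$. By Theorem \ref{thm:functor}, for any rational $G$-module $M$ we have $\cIm^{m,X}(M) \simeq \cL_{G/P}(\Rad^m(\epsilon^* M))$ as subbundles of $M \otimes \cO_X$. Since $\cL_{G/P}(-)$ commutes with $(-)^{\otimes m}$, $S^m(-)$ and $\Lambda^m(-)$ by Proposition \ref{prop:cL}.(3), and since $\cL_{G/P}(W) \simeq \gamma_r$ by Example \ref{ex:standard}, it suffices to establish the following isomorphisms of $P$-modules:
\[
\Rad^m(\epsilon^*(V^{\otimes m})) \simeq W^{\otimes m}, \quad \Rad^m(\epsilon^*(S^m V)) \simeq S^m W, \quad \Rad^m(\epsilon^*(\Lambda^m V)) \simeq \Lambda^m W,
\]
where $W = \Rad(\epsilon^* V) \subset V$ is the span of $e_1, \dots, e_r$.

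The heart of the matter is the tensor case. Recall that $\Rad^m(N) = J^m N$ for $J \subset \fu(\epsilon)$ the augmentation ideal, so $\Rad^m(\epsilon^*(V^{\otimes m}))$ is spanned by the elements $\mu \cdot (v_1 \otimes \dots \otimes v_m)$ for monomials $\mu$ of degree $m$ in the chosen basis of $\epsilon$. The key structural observation is that $\epsilon \cdot V = W$ while $\epsilon \cdot W = 0$, so that any product of two elements of $\epsilon$ annihilates $V$. Expanding the action of a degree-$m$ monomial on a simple tensor via the Leibniz rule, every term in which two operators land on the same factor vanishes; hence only the terms distributing the $m$ operators bijectively among the $m$ factors can survive, and each such term lies in $W^{\otimes m}$. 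This gives the inclusion $\Rad^m(\epsilon^*(V^{\otimes m})) \subseteq W^{\otimes m}$.

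For the reverse inclusion---the step where the hypothesis $m \leq n-r$ is essential---I would produce each basis vector of $W^{\otimes m}$ explicitly. Given $i_1, \dots, i_m \in \{1, \dots, r\}$, consider the elementary matrices $E_{i_\ell, r+\ell} \in \epsilon$ (which lie in $\fu_{r,n-r}$ precisely because $r + \ell \leq n$, i.e. $\ell \leq n - r$) and apply their product to $e_{r+1} \otimes \dots \otimes e_{r+m}$. Because the columns $r+1, \dots, r+m$ are pairwise distinct, in the Leibniz expansion the only surviving assignment is the one sending $E_{i_\ell, r+\ell}$ to the $\ell$-th factor, and the result is exactly $e_{i_1} \otimes \dots \otimes e_{i_m}$. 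Thus $W^{\otimes m} \subseteq \Rad^m(\epsilon^*(V^{\otimes m}))$, and we obtain equality. Since $P$ stabilizes $\epsilon$, the radical filtration of $\epsilon^*(V^{\otimes m})$ is automatically $P$-stable, so this is an isomorphism of $P$-modules.

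The symmetric and exterior cases then follow formally. The canonical $G$-equivariant surjections $V^{\otimes m} \twoheadrightarrow S^m V$ and $V^{\otimes m} \twoheadrightarrow \Lambda^m V$ are maps of $\fu(\epsilon)$- and $P$-modules, and applying $J^m$ commutes with surjections; hence $\Rad^m(\epsilon^*(S^m V))$ and $\Rad^m(\epsilon^*(\Lambda^m V))$ are the images of $W^{\otimes m}$, namely $S^m W$ and $\Lambda^m W$ (which inject into $S^m V$, $\Lambda^m V$ because $V = W \oplus U$ splits as vector spaces, with $U = \Span(e_{r+1}, \dots, e_n)$). Applying $\cL_{G/P}$ and Proposition \ref{prop:cL}.(3) then yields $\gamma_r^{\otimes m}$, $S^m(\gamma_r)$ and $\Lambda^m(\gamma_r)$, as claimed. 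The main obstacle is the lower bound in the tensor computation, i.e. realizing all of $W^{\otimes m}$ inside the radical, which is exactly what forces the constraint $m \leq n-r$.
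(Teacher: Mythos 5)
Your proof is correct and follows essentially the same route as the paper's: the identity $\Rad^m(\epsilon^*(V^{\otimes m})) = W^{\otimes m}$ is established by the same two observations (products of two elements of $\epsilon$ annihilate $V$, and the explicit element $(t_{i_1,r+1}\cdots t_{i_m,r+m})(e_{r+1}\otimes\cdots\otimes e_{r+m}) = e_{i_1}\otimes\cdots\otimes e_{i_m}$, which is where $m \leq n-r$ enters), after which the symmetric and exterior cases follow from the quotient maps and the conclusion from Theorem \ref{thm:functor} and Proposition \ref{prop:cL}.(3). Your packaging of the $S^m$ and $\Lambda^m$ cases via ``$J^m$ commutes with surjections'' is a marginally slicker way of getting both inclusions at once, but it is the same argument.
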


\begin{proof}
Write $\fu(\epsilon) = k[t_{i,j}]/(t_{i,j}^p),$ 
$1 \leq i \leq r,$ $r+1 \leq j \leq n$.
The action of  $t_{i,j}$  on $V$ is given by 
the rule $t_{i,j}e_j = e_i$ and $t_{i,j} e_\ell 
= 0$ for $\ell \neq j$. Let $W = \Rad(\epsilon^*V)$ 
as in the proof of Prop.~\ref{prop:can-gen}. 
On a tensor product $M \otimes N$ of 
modules the action is given by $t_{i,j}(v \otimes w) =
t_{i,j}v \otimes w + v \otimes t_{i,j} w$; thus
$\Rad^m(\epsilon^*(V^{\otimes m}))$ is contained in the 
subspace of $V^{\otimes m}$ spanned by all elements
$e_{i_1} \otimes \dots \otimes e_{i_m}$, where 
$1 \leq i_1, \dots, i_m \leq r$, which is 
$W^{\otimes m}$. On the other hand, for 
any sequence $i_1, \dots, i_m$, with
$1 \leq i_1, \dots, i_m \leq r$, we have that 
\begin{equation}
\label{eq:tensor}
e_{i_1} \otimes \dots \otimes e_{i_m} \ = \ (t_{i_1,r+1} \dots
t_{i_m, r+m}) (e_{r+1} \otimes \cdots \otimes e_{r+m})
\end{equation}
since $r+m \leq n$.
Hence, $\Rad^m(\epsilon^*(V^{\otimes m})) = 
 W^{\otimes m}$. Therefore, the equality 
$\cIm^{m,X}(V^{\otimes m}) \ = \cL_{G/P}(W^{\otimes m}) \ 
= \ \gamma_r^{\otimes m}$
follows from Proposition \ref{prop:cL}.3,  
Theorem~\ref{thm:functor}, and Example~\ref{ex:standard}.

To show (2),  note that the action of $\fu(\epsilon)$ on   $S^m(V)$
is induced by the action on $V^{\otimes m}$ via 
the projection $V^{\otimes m} \twoheadrightarrow S^m(V)$. 
Hence, the formula \eqref{eq:tensor} is still 
valid in $S^m(V)$, and implies the 
inclusion $S^m(W) \subset \Rad^m(\epsilon^*(S^m(V))$. 
The reverse inclusion is immediate 
just as in the tensor powers case. 
Therefore, $\Rad^m(\epsilon^*(S^m(V)))= S^m(W)$,
and we conclude the equality $\cIm^{m,X}(S^m(V)) \ = 
\ S^m(\gamma_r)$ appealing to Theorem~\ref{thm:functor}.

The proof for exterior powers is completely analogous.
\end{proof}

\begin{remark}
The restriction on $m$  in Proposition 
\ref{prop:can-gen2} is not sharp.
For example, if $n = 4$ and $r = 2$, 
then it is straight forward to see that 
$\cIm^3(V^{\otimes 3}) \simeq \gamma_2^{\otimes 3}$
provided $p >2$. On the other hand, 
if $n = 3$ and $r = 2$, then 
$\cIm(V^{\otimes 2})$ is a proper 
subbundle of $\gamma_2^{\otimes 2}$,
regardless of the prime.  
\end{remark}

We use some standard Lie-theoretic  notation for the remainder of this section. 
Let $G$ be a simple algebraic 
group and let $\Delta = \{ \alpha_1, \ldots, \alpha_n\}$ 
be a fixed set of simple roots corresponding to a fixed torus $T$ 
inside a Borel subgroup $B$.  We follow the 
convention in \cite[ch.6]{Bour} in the numbering 
of simple roots. Let $\fg = \Lie(G)$, and let $\fh = \Lie T$ 
be the Cartan subalgebra.  For a simple root 
$\alpha \in \Delta$, we denote by $P_\alpha$, 
$\fp_\alpha$ the corresponding standard maximal 
parabolic subgroup and its Lie algebra.

We provide  a calculation analogous to Proposition~\ref{prop:can-gen} 
and \ref{prop:can-gen2} for the  symplectic group $\Sp_{2n}$.
The only maximal parabolic subgroup  $P_\alpha$ in standard form whose
unipotent radical is abelian corresponds to 
the longest simple root:  namely, $P =  P_{\alpha_n}$ for 
$\alpha_n$ the unique long simple root. Equivalently, 
$P_{\alpha_n}$ is the unique cominuscule parabolic
subgroup of $\Sp_{2n}$ in standard form, as 
in Definition~\ref{comin}.

If we view $G=\Sp_{2n}$ as the group of automorphisms of a 
symplectic vector space $V$ of dimension $2n$ with 
chosen symplectic basis $\{ x_1,\ldots,x_n,y_n,\ldots,y_1 \}$, 
then $P_{\alpha_n}$ is the stabilizer of the totally 
isotropic subspace spanned by $\{ x_1,\ldots,x_n \}$.

We recall from \cite[2.12]{CFP3} that $m =$ 
$n+1\choose 2$ is the dimension of each maximal
elementary subalgebra of $\fsp_{2n}$. For $p>4n-2$, we have an isomorphism  
\begin{equation}
\label{lagrange}
\bE(m, \fsp_{2n}) \ \simeq 
\ \Sp_{2n}/P_{\alpha_n} \ = \ \LG(n, V)
\end{equation}
(as follows from Theorem~\ref{thm:sep} and \cite[2.5, 2.9]{CFP3}) where $\LG(n, V)$ is the Lagrangian Grassmannian 
of maximal isotropic subspaces of the defining representation $V$.

\begin{prop}
\label{symplec}
Consider  $G = \Sp_{2n}$ and its defining 
representation $V$ (of dimension $2n$); assume $p > 4n-2$.
Let $P_{\alpha_n} \subset \Sp_{2n}$ be the maximal 
parabolic subgroup in standard form corresponding to the
longest root as described above, and let 
$\fp = \Lie(P_{\alpha_n})$.
Let $\epsilon$ be the nilpotent radical of 
$\fp$, an elementary subalgebra
of $\fsp_{2n}$ of dimension $m =$ 
$n+1\choose 2$.  As in (\ref{lagrange}), 
let 
$$Y = \bE(m, \fsp_{2n})\ \simeq \LG(n, V),$$ 
and let $\gamma_n \ \subset \ 
\cO_Y^{\oplus 2n}$ be the canonical subbundle of rank $n$.
We have the following natural identifications 
of algebraic vector bundles on $Y$:

\vspace{0.05in}
\begin{enumerate}
\item  
$\cIm^{1,Y}(V) \ \simeq \ \gamma_n, 
\quad \cIm^{j,Y}(V) = 0 {\text \ for \ } j > 1$. \\
\item 
$\cIm^{1,Y}(\Lambda^{2n-1}(V)) \simeq \gamma_n^\vee, 
\quad \cIm^{j,Y}(\Lambda^{2n-1}(V)) = 0 {\text \ for \ } j > 1. $ \\
\item 
For $m \leq n$,
\begin{enumerate}
\item
$\cIm^{m,Y}(V^{\otimes m}) = (\gamma_n)^{\otimes m}$, 
\item
$\cIm^{m,Y}(S^m(V)) = S^m(\gamma_n)$,
\item
$\cIm^{m,Y}(\Lambda^m(V))= \Lambda^m(\gamma_n).$ 
\end{enumerate}
\end{enumerate}
\end{prop}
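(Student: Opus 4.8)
The plan is to deduce all three statements from Theorem~\ref{thm:functor}, in exact parallel with the proofs of Propositions~\ref{prop:can-gen} and \ref{prop:can-gen2}. Since $p>4n-2$, Theorem~\ref{thm:sep} guarantees that the orbit map $\phi_\epsilon$ is separable, so $Y\simeq \Sp_{2n}/P_{\alpha_n}$ as in \eqref{lagrange} and Theorem~\ref{thm:functor} applies with $H=P_{\alpha_n}$. Thus for every rational module $M$ one has $\cIm^{j,Y}(M)\simeq \cL_{G/P}(\Rad^j(\epsilon^*M))$, and the whole problem reduces to computing the radical series of $\epsilon^*M$ as a $P_{\alpha_n}$-module and then feeding the answer through $\cL_{G/P}$, using the identification $\cL_{G/P}(W)=\gamma_n$ of Example~\ref{ex:standard} together with the compatibility of $\cL_{G/P}$ with $\otimes$, $S^m$ and $\Lambda^m$ from Proposition~\ref{prop:cL}.(3).

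First I would record the $\epsilon$-action concretely. In the symplectic basis $\{x_1,\dots,x_n,y_n,\dots,y_1\}$ the Lagrangian $W=\langle x_1,\dots,x_n\rangle$ is stabilized by $P_{\alpha_n}$, and $\epsilon=\fu$ is the nilpotent radical, which as a $P_{\alpha_n}$-module is $S^2(W)$ (cf.\ Proposition~\ref{tan-cot}.(4)); its elements act on $V$ by symmetric maps carrying the $\langle y_i\rangle$-part into $W$ and annihilating $W$. Part (1) is then immediate: $\Rad(\epsilon^*V)=\epsilon\cdot V=W$ while $\Rad^2(\epsilon^*V)=\epsilon\cdot W=0$, so $\cIm^{1,Y}(V)\simeq\cL_{G/P}(W)=\gamma_n$ and the higher image sheaves vanish. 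For part (2) I would identify $\Lambda^{2n-1}V\simeq V^\#$ as $\Sp_{2n}$-modules (the top exterior power is trivial since $\Sp_{2n}\subset\SL_{2n}$) and then invoke the duality sequence of Proposition~\ref{dual} with $M=V$; using part (1), its outer term is $(\cIm^{1,Y}(V))^\vee=\gamma_n^\vee$, which yields the asserted identification, while the vanishing for $j>1$ again follows from $\Rad^j(\epsilon^*V)=0$ for $j\ge 2$.

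The real work is part (3), whose pattern mirrors Proposition~\ref{prop:can-gen2}: one must show that for $m\le n$,
\[
\Rad^m(\epsilon^*(V^{\otimes m})) \ = \ W^{\otimes m},
\]
after which (3)(b),(c) follow by pushing this equality through the $P_{\alpha_n}$-equivariant projections $V^{\otimes m}\twoheadrightarrow S^m(V)$ and $V^{\otimes m}\twoheadrightarrow\Lambda^m(V)$, and then applying $\cL_{G/P}$ and Proposition~\ref{prop:cL}.(3) to obtain $S^m(\gamma_n)$ and $\Lambda^m(\gamma_n)$. The inclusion $\Rad^m(\epsilon^*(V^{\otimes m}))\subseteq W^{\otimes m}$ is a routine Leibniz computation: each factor of $\epsilon$ raises at most one tensor slot into $W$ and kills slots already lying in $W$, so after $m$ applications to an $m$-tensor only pure $W$-tensors survive.

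The hard part, and the place where the argument genuinely departs from the $\GL_n$ case, is the reverse inclusion $W^{\otimes m}\subseteq\Rad^m(\epsilon^*(V^{\otimes m}))$. For $\GL_n$ one realizes each basis tensor via \eqref{eq:tensor} using the \emph{independent} elementary raising operators $t_{i,j}$, each acting on a single column with no side effects. Here the operators live in $S^2(W)$ and are intrinsically \emph{symmetric}: the element moving $y_j$ to $x_i$ simultaneously moves $y_i$ to $x_j$, so one cannot raise tensor slots one at a time without generating cross terms. I would therefore realize an arbitrary basis vector $x_{i_1}\otimes\cdots\otimes x_{i_m}$ by applying a length-$m$ product of radical elements to a seed $y_{j_1}\otimes\cdots\otimes y_{j_m}$ with \emph{distinct} indices $j_1,\dots,j_m$ — possible precisely because $m\le n$ — and tracking the Leibniz expansion so that the distinguished term is the desired tensor while the entangled cross terms either vanish (distinct seed indices) or are absorbed into already-constructed tensors by downward induction on the multiset $\{i_1,\dots,i_m\}$. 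The hypothesis $p>4n-2$ (in particular $p>2m$) is exactly what clears the binomial-coefficient denominators produced by repeated indices in this expansion. This combinatorial bookkeeping with the symmetric operators is the main obstacle; everything else is formal.
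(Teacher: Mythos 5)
Your overall strategy --- reduce everything to computing $\Rad^j(\epsilon^*M)$ as a $P_{\alpha_n}$-module and push the answer through $\cL_{G/P}$ via Theorem \ref{thm:functor} --- is exactly the paper's, and your part (1) coincides with the paper's argument. There are, however, two gaps. First, in part (2) you invoke Proposition \ref{dual}, but that proposition produces the sequence $0 \to \cKer^{1,Y}(V^\#) \to V^\#\otimes\cO_Y \to (\cIm^{1,Y}(V))^\vee \to 0$, whose outer term is the cokernel of the kernel bundle of $V^\#$ (fiber $V^\#/\Soc(\epsilon^*(V^\#))$), not the image bundle $\cIm^{1,Y}(V^\#)$ (fiber $\Rad(\epsilon^*(V^\#))$); these are not the same sheaf in general, so the sequence does not ``yield the asserted identification.'' The paper instead deduces (2) directly from (1) together with the isomorphism of $\fg$-modules $\Lambda^{2n-1}(V)\simeq V^\#$; to argue your way you would still have to compute $\Rad(\epsilon^*(V^\#))$, which is the annihilator of $\Soc(\epsilon^*V)=W$.

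Second, and more seriously, in part (3) you correctly isolate the crux --- proving $W^{\otimes m}\subseteq\Rad^m(\epsilon^*(V^{\otimes m}))$ in the presence of the symmetric operators $t_{i,j}$, which move two basis vectors at once --- and you correctly propose applying a length-$m$ product of such operators to a seed $e_{n+j_1}\otimes\cdots\otimes e_{n+j_m}$ with $j_1,\dots,j_m$ distinct (which is where $m\le n$ enters). But you then leave the cross-term analysis as an acknowledged ``main obstacle,'' to be handled by an unspecified downward induction on the multiset of indices, and you assert that $p>2m$ is needed to clear binomial denominators. Neither is how the argument closes, and the induction you gesture at is not set up. The paper's device is to permute the target string into the form $(i_1,\dots,i_\ell,i_1^{a_1-1},\dots,i_\ell^{a_\ell-1})$ with $i_1>\cdots>i_\ell$ the distinct values occurring, and then to choose $j_s=i_s$ for $s\le\ell$ and the remaining $j_s$ pairwise distinct and outside $\{i_1,\dots,i_\ell\}$. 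With this choice $t_{i_s,n+j_s}e_{n+j_r}\neq 0$ forces $j_s=j_r$ (the alternative $i_s=j_r$ forces $s=r$ by construction), so only the identity permutation survives in the Leibniz expansion \eqref{formula}: the target tensor appears with coefficient exactly $1$, no binomial coefficients arise, and no hypothesis on $p$ is used at this step. A final conjugation by $w^{-1}$ handles arbitrary orderings. The hypothesis $p>4n-2$ is consumed entirely by Theorem \ref{thm:sep}, i.e., by the separability needed to identify $Y$ with $\Sp_{2n}/P_{\alpha_n}\simeq\LG(n,V)$, not by the radical computation.
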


\begin{proof}
 
We view $\Sp_{2n}$ as the stabilizer of the symplectic form
defined by the matrix 
\[
 \begin{pmatrix} 0 & I_{n}\\ -I_{n} & 0  \end{pmatrix},
 \]
so that $ \fsp_{2n}$ is the set of 
matrices of the form
\[
\begin{pmatrix} A & B \\ C &D  \end{pmatrix} 
\]
where $D = -A^{T}$ and $B$ and $C$ are $n \times n$ 
symmetric matrices.  Then $\fp \subset \fsp_{2n}$ 
is  defined by $C = 0$ (this can be easily verified   
from the explicit description of roots and roots 
spaces as in, for example, \cite[12.5]{EW06}).  
We view  $V$ as the 
space of column vectors on which these matrices
act from the left, and give $V$ the standard basis $e_1, \dots, e_{2n}$. 

The restricted enveloping algebra of $\epsilon$ 
has the form $k[t_{i,j}]/(t_{i,j}^p)$ where 
$1 \leq i \leq n$ and $n+i \leq j \leq 2n$. The generator
$t_{i,j}$ acts on $V$ by the matrix $E_{i,j}$ if 
$j = n+i$ and by $E_{i,j} + E_{j-n,i+n}$ otherwise. 
Here, $E_{i,j}$ is the matrix with 1 in the ($i,j$) position
and 0 elsewhere. Thus we have that 
\begin{equation} 
\label{eq:relations} 
t_{i,j}e_j = e_i, \quad t_{i,j} e_{i+n} = e_{j-n}, 
\quad \text{ and } \quad t_{i,j} e_\ell = 0
\end{equation}
whenever $\ell \neq j, i+n$. These 
relations immediately  imply that 
$\Rad(\epsilon^*V) = \Soc(\epsilon^*(V)) = W$ where 
$W \subset V$ is the $P$-stable subspace 
generated  by $e_1, \ldots, e_n$. 
Moreover, we also have  that $\Rad^j(\epsilon^* V) =
\Soc^j(\epsilon^* V) = 0 $ for any $j>1$. 
Applying Theorem~\ref{thm:functor}, we get
\[ \cIm^{1,Y}(V)  = \cKer^{1,Y}(V) \simeq \cL_{\Sp_{2n}/P_{\alpha_n}}(W)= 
\gamma_n, \quad \cIm^{j,Y}(V)  = \cKer^{j,Y}(V) =0 \text{ for } j>1. \]

Part (2) follows from (1) and the fact 
that $\Lambda^{2n-1}(V)$ is the dual of the
$\fg$-module $V$ (since $G$ has no non-trivial 1-dimensional
rational representation). 

We proceed to show that $\cIm^{m,Y}(V^{\otimes m}) 
\simeq (\gamma_n)^{\otimes m}$ for $m \leq n$. 
We note that as in the proof of 
Proposition \ref{prop:can-gen}, it is only necessary
to show that $(\Rad(\epsilon^*V))^{\otimes m}
\subseteq \Rad^m(\epsilon^*(V^{\otimes m}))$, since
the reverse inclusion is obvious. 

Since $\Rad^2(\epsilon^*V)=0$, the action of 
$\Rad^m(\fu(\epsilon))$  on $V^{\otimes m}$ is given by the formula
\begin{equation}
\label{formula}
(t_{i_1,n+j_1} \cdots t_{i_m, n+ j_m})
(e_{s_1} \otimes \dots \otimes e_{s_m}) =  \end{equation}
\[ 
\sum\limits_{\pi \in \Sigma_m} t_{i_{\pi(1)}, n+j_{\pi(1)}} e_{s_1} 
\otimes \dots \otimes t_{i_{\pi(1)}, n+j_{\pi(m)}} e_{s_m}.
\]
To prove the inclusion $(\Rad(\epsilon^*V))^{\otimes m}
\subseteq \Rad^m(\epsilon^*(V^{\otimes m}))$, we 
need to show that for any $m$-tuple of indices 
$(i_1, \ldots, i_m)$, $1 \leq i_j \leq n$, we have
$e_{i_1} \otimes \dots \otimes e_{i_m} \in 
\Rad^m(\epsilon^*(V^{\otimes m}))$. We first show the following

{\it \underline{Claim}.} For any simple tensor 
$e_{i_1} \otimes \dots \otimes e_{i_m}$ in 
$(\Rad(\epsilon^*V))^{\otimes m}$, there 
exists a permutation $w \in \Sigma_m$ such that 
$e_{w(i_1)} \otimes \dots \otimes e_{w(i_m)} 
\in \Rad^m(\epsilon^*(V^{\otimes m}))$.

We proceed to prove the claim. 
Let $e_{i_1} \otimes \dots \otimes e_{i_m}$ be any 
simple tensor in $(\Rad(\epsilon^*V))^{\otimes m}$. 
Applying a suitable permutation 
$\pi \in \Sigma_m$ to $(1, \ldots, m)$, we may 
assume that $(i_1, \ldots, i_m)$  has the form 
$(i_1^{a_1}, i_2^{a_2}, \ldots, i_\ell^{a_\ell})$ 
where $i_1 > i_2 > \cdots > i_\ell$ and 
$a_1+\ldots + a_\ell = m$. Applying yet another permutation,
 we may assume that the string of 
indices $(i_1, \ldots, i_m)$ has the form
\[
(i_1, i_2,\ldots, i_\ell, i_1^{a_1-1}, 
\ldots, i_\ell^{a_\ell-1}),
\]
with $i_1>i_2> \cdots >i_\ell$.  To this string  
of indices we associate the string of indices 
$j_1, \ldots, j_m$ by the following rule: 
\[j_1 = i_1, j_2 = i_2, \ldots, j_\ell = i_\ell\]
and $(j_{\ell+1}, \ldots, j_m)$ is a subset of 
$m-\ell$ distinct numbers from $\{1, \ldots, n \} 
\backslash \{i_1, i_2, \ldots, i_\ell\}$. 
We claim that 
\begin{equation}
\label{eq:perm}
(t_{i_1,n+j_1} \cdots t_{i_m, n+ j_m})
(e_{n+j_1} \otimes \dots \otimes e_{n+j_m}) = e_{i_1} 
\otimes \dots \otimes e_{i_m}. 
\end{equation}  
Indeed, relations \eqref{eq:relations} imply that 
$t_{i_1, n+j_1}e_{n+j_1} \otimes \cdots \otimes 
t_{i_m, n+j_m}e_{n+j_m} =  e_{i_1} \otimes \dots 
\otimes e_{i_m}$. We need to show that all the 
other terms in \eqref{formula} are zero.   
To have $t_{i_s, n+j_s}e_{n+j_r} \not = 0$, 
we must have either $j_s=j_r$ or $i_s=j_r$. 
By the choice of $(j_1, \ldots, j_m)$, the second 
condition $i_s = j_r$ implies that $s=r$ and, 
hence,  $j_s=j_r$. Therefore, 
$t_{i_s, n+j_s}e_{n+j_r} \not = 0$ if and only if $j_s=j_r$. 
Since by construction all 
$(j_1,\ldots, j_m)$ are distinct, we conclude that 
$t_{i_{\pi(1)}, n+j_{\pi(1)}} e_{n +j_1} \otimes 
\dots \otimes t_{i_{\pi(1)}, n+j_{\pi(m)}} 
e_{n + j_m} \not = 0$ if and only if $\pi$ is 
the identity permutation which proves \eqref{eq:perm}.  
This finishes the proof of the claim. 

Now let $e_{i_1} \otimes \dots \otimes e_{i_m}$ be 
an arbitrary tensor with $1 \leq i_j \leq n$. As we 
just proved, there exist $w \in \Sigma_m$ and  
indices $j_1, \ldots, j_m$ such that 
\begin{equation}
\label{eq:perm2}(t_{w(i_1),n+j_1} \cdots t_{w(i_m), n + j_m})
(e_{n+j_1} \otimes \dots \otimes e_{n+j_m}) = 
e_{w(i_1)} \otimes \dots \otimes e_{w(i_m)}.
\end{equation} 
The formula \eqref{formula} implies that if we apply 
$w^{-1}$ to \eqref{eq:perm2} we get the desired result, that is 
\[ 
(t_{i_1,n+w^{-1}(j_1)} \cdots t_{i_m, n+ w^{-1}(j_m)})
(e_{n+w^{-1}(j_1)} \otimes \dots \otimes e_{n+w^{-1}(j_m)}) = 
e_{i_1} \otimes \dots \otimes e_{i_m}. 
\]
Therefore, $e_{i_1} \otimes \dots \otimes e_{i_m} 
\in \Rad^m(\epsilon^*(V^{\otimes m}))$.
The statement for symmetric and exterior powers 
follows just as in Proposition~\ref{prop:can-gen2}.
\end{proof}

\begin{defn}
\label{comin}
For $\alpha$ a simple root, the (maximal) 
parabolic $P_{\alpha}$  of $G$
is called {\it cominuscule} if $\alpha$ enters with 
coefficient at most $1$ in any positive root.   
\end{defn} 
The cominuscule parabolics appear naturally in our 
study of elementary subalgebras because of the 
following equivalent description.

\begin{lemma}
\label{le:comm}{\cite[Lemma 2.2]{RRS}}
Let $G$ be a simple algebraic group and $P$ be a 
proper standard parabolic subgroup. Assume 
$p\not = 2$ whenever $\Phi(G)$ has 
two different root lengths. 
Then the nilpotent radical  of $\fp = \Lie(P)$ 
is abelian if and only if $P$ is a cominuscule parabolic.
\end{lemma}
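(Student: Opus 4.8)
The plan is to translate the Lie-theoretic statement into a combinatorial condition on roots and then isolate the one place where the characteristic hypothesis is genuinely needed. Write $P = P_I$ for the subset $I \subsetneq \Delta$ of simple roots whose root groups lie in the Levi, and set $J = \Delta \setminus I$. Fixing a Chevalley basis, the nilradical is $\fu = \bigoplus_\beta \fg_\beta$, the sum over those $\beta \in \Phi^+$ whose expansion $\beta = \sum_{\alpha \in \Delta} n_\alpha(\beta)\,\alpha$ has $J$-degree $d_J(\beta) := \sum_{\alpha \in J} n_\alpha(\beta) \geq 1$. Since $[\fg_\beta, \fg_\gamma] \subseteq \fg_{\beta+\gamma}$ and $d_J$ is additive, grading by $d_J$ gives $\fu = \bigoplus_{i \geq 1} \fu_i$ with $[\fu_i, \fu_j] \subseteq \fu_{i+j}$. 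In particular $[\fu,\fu]$ lands in degree $\geq 2$, so if every positive root has $d_J \leq 1$ (equivalently $\fu_{\geq 2} = 0$) then $\fu$ is automatically abelian; this implication requires no hypothesis on $p$.

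The combinatorial content of ``cominuscule'' is that $\fu_{\geq 2} = 0$ forces $|J| = 1$. Here I would use the standard dominance fact that for the highest root $\wt\alpha = \sum_\alpha c_\alpha\,\alpha$ every positive root satisfies $n_\alpha(\beta) \leq c_\alpha$, together with $c_\alpha \geq 1$ for all $\alpha$ since $G$ is simple. Then $d_J(\wt\alpha) = \sum_{\alpha \in J} c_\alpha \geq |J|$, so $\fu_{\geq 2} = 0$ forces $|J| = 1$, say $J = \{\alpha\}$, together with $c_\alpha = 1$. By the same dominance, $c_\alpha = 1$ is precisely the condition that $\alpha$ occurs with coefficient at most $1$ in every positive root, i.e.\ that $P = P_\alpha$ is cominuscule. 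This yields the purely combinatorial equivalence $\fu_{\geq 2} = 0 \iff P$ cominuscule.

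It remains to prove the converse of the easy direction: if $\fu_{\geq 2} \neq 0$ then $\fu$ is non-abelian. I would choose $\beta \in \Phi^+$ of minimal height with $d_J(\beta) \geq 2$. Writing $\beta = \gamma + \alpha_i$ with $\alpha_i$ simple and $\gamma \in \Phi^+$ (every non-simple positive root admits such a decomposition through roots), we have $d_J(\gamma) \geq d_J(\beta) - 1 \geq 1$, while minimality of the height of $\beta$ rules out $d_J(\gamma) = 2$; hence $d_J(\beta) = 2$, $d_J(\gamma) = 1$, and $\alpha_i \in J$. Thus both $\gamma$ and $\alpha_i$ lie in $\fu_1$ and $[\fg_\gamma, \fg_{\alpha_i}] = N_{\gamma,\alpha_i}\,\fg_\beta$, so everything reduces to showing $N_{\gamma,\alpha_i} \not\equiv 0 \pmod p$. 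This structure-constant nonvanishing is the main obstacle.

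For that last point I would invoke $N_{\gamma,\alpha_i} = \pm(r+1)$, where $r$ is the largest integer with $\gamma - r\alpha_i \in \Phi$, together with the bound that $\alpha_i$-root strings have length at most $2$ in the simply-laced case, at most $3$ when $\Phi$ has two root lengths, and at most $4$ in type $G_2$. Since $\gamma + \alpha_i \in \Phi$ already consumes one step, this gives $r = 0$ and $N = \pm 1$ in the simply-laced case, and $r \leq 1$ with $N \in \{\pm 1, \pm 2\}$ in the doubly-laced case, where the hypothesis $p \neq 2$ is exactly what guarantees $N \neq 0$. The only case outside this clean dichotomy is $G_2$, where $r$ could be $2$; but $G_2$ has no cominuscule parabolic (neither simple root has coefficient $1$ in the highest root $3\alpha_1 + 2\alpha_2$), so there I would instead check directly that every proper parabolic has $\fu_{\geq 2} \neq 0$ and exhibit a surviving bracket by choosing the decomposition $\beta = \gamma + \alpha_i$ so as to avoid the length-$4$ string. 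I expect the bookkeeping of structure constants in the multiply-laced types --- ensuring some witnessing bracket survives reduction mod $p$ --- to be the only genuinely delicate step, the grading reduction and highest-root dominance being formal.
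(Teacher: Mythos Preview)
The paper does not prove this lemma at all; it is stated with a citation to \cite[Lemma 2.2]{RRS} and used as a black box. Your proposal is a correct self-contained proof along the standard lines (grade $\fu$ by $d_J$, reduce to the nonvanishing of a Chevalley structure constant), so there is nothing to compare against in the paper itself.

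One minor remark: your special handling of $G_2$ is unnecessary. With your minimal-height choice of $\beta$ you have $d_J(\gamma)=1$ and $\alpha_i\in J$; if the $\alpha_i$-coefficient of $\gamma$ is $0$ then $\gamma-\alpha_i$ has mixed-sign simple-root coefficients and is not a root, so $r=0$; if that coefficient is $1$ then $\gamma-2\alpha_i$ again has mixed signs unless $\gamma=\alpha_i$, in which case $\gamma-\alpha_i=0$ is not a root. Either way $r\le 1$ and $N_{\gamma,\alpha_i}\in\{\pm1,\pm2\}$, so $p\ne 2$ suffices uniformly, including in $G_2$. Your argument is not wrong, just slightly over-cautious at that point.
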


The following is a complete list of cominuscule 
parabolics for simple groups (see, for 
example, \cite{BL} or \cite{RRS}):
\begin{enumerate}
\item Type $A_n$. $P_\alpha$ for any 
$\alpha \in \{ \alpha_1, \ldots, \alpha_n\}$. 
\item Type $B_n$. $P_{\alpha_1}$. 
\item Type $C_n$. $P_{\alpha_n}$ ($\alpha_n$ 
is the unique long simple root).
\item Type $D_n$. $P_{\alpha}$ for 
$\alpha \in \{\alpha_1, \alpha_{n-1}, \alpha_n\}$.
\item Type $E_6$. $P_{\alpha}$ for 
$\alpha \in \{\alpha_1, \alpha_6\}$.
\item Type $E_7$. $P_{\alpha_7}$. 
\end{enumerate}
For types $E_8, F_4, G_2$ there are no cominuscule parabolics.

\vspace{0.1in}

\begin{prop}
\label{prop:comin}
Let $G$ be a simple algebraic group and 
$P_\alpha$ be a maximal parabolic
subgroup of $G$.  Denote by $\fp$ the Lie algebra 
$\Lie(P_\alpha)$ and by $\fu$ the nilpotent radical
of $\fp$.
\begin{enumerate}
\item  If $G$ has type $B$ or $C$, assume 
that $p \not= 2, 3$.  Then $\fu$ 
is an elementary subalgebra if and 
only if $P_\alpha$ is cominuscule.
\item  Assume $p \not= 2$.   Then $[\fu,\fp] \ = \ \fu$.
\item  If $P_\alpha$ is cominuscule, 
then $\fp \ = \ [\fu,\fg]$.
\end{enumerate}
\end{prop}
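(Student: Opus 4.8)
The plan is to work throughout with the root space decomposition $\fg = \fh \oplus \bigoplus_{\beta \in \Phi} \fg_\beta$ relative to $T \subset B$, recording for each root $\beta$ the coefficient $c_\alpha(\beta) \in \bZ$ of $\alpha$. For the maximal parabolic $P_\alpha$ one has $\fp = \fl \oplus \fu$ with Levi part $\fl = \fh \oplus \bigoplus_{c_\alpha(\beta) = 0}\fg_\beta$ and nilradical $\fu = \bigoplus_{c_\alpha(\beta) > 0}\fg_\beta$; write $\fu^- = \bigoplus_{c_\alpha(\beta) < 0}\fg_\beta$, so $\fg = \fu^- \oplus \fl \oplus \fu$. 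I would treat the parts in the order (2), (1), (3). For (2), note that $\fu$ is an ideal of $\fp$, so $[\fu,\fp] \subseteq \fu$; for the reverse inclusion I recover each root space individually, since for $c_\alpha(\beta) > 0$ we have $[h_\beta, e_\beta] = \langle \beta, \beta^\vee\rangle e_\beta = 2e_\beta$, and $p \neq 2$ gives $e_\beta \in [\fh,\fu] \subseteq [\fp,\fu]$. Hence $\fu \subseteq [\fp,\fu]$ and equality holds.

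For (1), recall that \emph{elementary} means abelian together with $x^{[p]} = 0$ for all $x$. The abelian-versus-cominuscule dichotomy is precisely Lemma~\ref{le:comm}; the hypotheses $p \neq 2,3$ in types $B,C$ are what let one pass between the combinatorial cominuscule condition and the vanishing (or not) of the brackets $[\fg_\gamma,\fg_\delta]$ landing in a root space of degree $c_\alpha \geq 2$, i.e. the non-vanishing mod $p$ of the relevant structure constants $N_{\gamma,\delta}$. It then remains only to check that an abelian $\fu$ automatically has trivial $p$-restriction. Every root vector satisfies $e_\beta^{[p]} = 0$, since $e_\beta^{[p]}$ is a $T$-weight vector of weight $p\beta$, and $p\beta$ is neither $0$ nor a root, so lies in no weight space of $\fg$. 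On an abelian restricted subalgebra the map $x \mapsto x^{[p]}$ is additive and $p$-semilinear by Jacobson's formula, hence determined by its values on the root-vector basis of $\fu$, all of which vanish; thus $x^{[p]} = 0$ for all $x \in \fu$. Combining, $\fu$ elementary $\iff$ $\fu$ abelian $\iff$ $P_\alpha$ cominuscule.

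The substantive assertion is (3), where I would exploit the short $\bZ$-grading furnished by cominuscularity: since $\alpha$ occurs with coefficient at most $1$ in each positive root, $c_\alpha$ takes only the values $-1,0,1$, so $\fg = \fg_{-1}\oplus\fg_0\oplus\fg_1$ with $\fg_1 = \fu$, $\fg_0 = \fl$, $\fg_{-1} = \fu^-$, and crucially $[\fg_1,\fg_1] = [\fg_{-1},\fg_{-1}] = 0$. The inclusion $[\fu,\fg]\subseteq\fp$ is then immediate, as $[\fg_1,\fg_{-1}]\subseteq\fg_0$, $[\fg_1,\fg_0]\subseteq\fg_1$, and $[\fg_1,\fg_1]=0$. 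For the reverse inclusion it suffices to prove $[\fg_1,\fg_{-1}] = \fg_0$ and $[\fg_0,\fg_1] = \fg_1$, and I would deduce each from the simplicity of $\fg$. Using the grading and the Jacobi identity one checks that
\[
\fa = [\fg_1,\fg_{-1}] \oplus \fg_1 \oplus \fg_{-1}, \qquad \fb = \fg_{-1} \oplus \fg_0 \oplus [\fg_0,\fg_1]
\]
are both ideals of $\fg$: for instance $[\fg_0,\fa]\subseteq\fa$ since $[\fg_1,\fg_{-1}]$ is an $\ad \fg_0$-submodule of $\fg_0$, while $[\fg_{\pm1},\fa]\subseteq\fa$ because $[\fg_1,\fg_1]=[\fg_{-1},\fg_{-1}]=0$, and similarly for $\fb$. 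As $\fg$ is simple and $\fa \supseteq \fg_1 \neq 0$, $\fb \supseteq \fg_{-1}\neq 0$, both ideals equal $\fg$; comparing the degree-$0$ component of $\fa$ and the degree-$1$ component of $\fb$ with $\fg$ yields $[\fg_1,\fg_{-1}]=\fg_0$ and $[\fg_0,\fg_1]=\fg_1$. Hence $[\fu,\fg] = \fg_0 \oplus \fg_1 = \fp$.

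I expect the main obstacle to lie in (3): beyond the routine ideal verifications, one must justify that $\fg$ is simple as a Lie algebra so that the ideal argument applies, since for small $p$ the algebra $\fg$ can acquire a center. In that degenerate case the argument would instead be run modulo $\fz(\fg)$, or replaced by the explicit root computation showing that the coroots $h_\beta = [e_\beta, e_{-\beta}]$ together with the Levi root vectors — which occur in the minuscule $\fl$-module $\fg_1$ with structure constants $\pm1$, hence are invertible mod every $p$ — already span $\fg_0$. A secondary subtlety, visible already in (1), is identifying exactly which structure constants must be invertible mod $p$, which is what forces the hypothesis $p \neq 2,3$ in types $B$ and $C$.
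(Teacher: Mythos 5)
Parts (1) and (2) of your argument coincide with the paper's: both invoke \cite[Lemma 2.2]{RRS} for ``$\fu$ abelian $\iff P_\alpha$ cominuscule,'' observe that $x_\beta^{[p]}=0$ because $p\beta$ is not a root, use additivity of $(-)^{[p]}$ on the abelian $\fu$ to conclude (1), and prove (2) from $[\fu,\fp]\subseteq\fu$ together with $[h_\beta,x_\beta]=2x_\beta$ and $p\neq 2$. For (3) you take a genuinely different route. The inclusion $[\fu,\fg]\subseteq\fp$ is the same coefficient count in both proofs. For the reverse inclusion the paper works with the root datum: it exhibits a set $S$ of roots of $\fu$ whose pairwise differences realize every simple root of the Levi, deduces that $\{\beta\}_{\beta\in S}$ generates $\bZ\Phi$ (hence $\{\beta^\vee\}$ generates $\bZ\Phi^\vee$ in the simply laced case and only $\bZ[\tfrac{1}{2}]\Phi^\vee$ in types $B,C$), concludes that the coroots $h_\beta=[x_\beta,x_{-\beta}]\in[\fu,\fg]$ span $\fh$ over $k$, and finishes with the Jacobi identity. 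You instead use the short grading $\fg=\fg_{-1}\oplus\fg_0\oplus\fg_1$ and show that $[\fg_1,\fg_{-1}]\oplus\fg_1\oplus\fg_{-1}$ and $\fg_{-1}\oplus\fg_0\oplus[\fg_0,\fg_1]$ are ideals, which by simplicity of $\fg$ forces $[\fg_1,\fg_{-1}]=\fg_0$ and $[\fg_0,\fg_1]=\fg_1$. Your ideal verifications are correct, and when $\Lie(G)$ is simple as a Lie algebra this is shorter and more conceptual than the paper's computation.

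The gap is the one you flag yourself, and it is not a removable corner case. The proposition assumes only that $G$ is a simple algebraic \emph{group}; $\Lie(G)$ fails to be simple as a Lie algebra exactly in the small-characteristic situations ($\fsl_n$ with $p\mid n$, the classical types in characteristic $2$, $E_6$ in characteristic $3$, $E_7$ in characteristic $2$, \dots), several of which carry cominuscule parabolics, and handling these is precisely where the content of the paper's proof lies. Your first fallback, running the argument modulo $\fz(\fg)$, only yields $\fg_0\subseteq[\fg_1,\fg_{-1}]+\fz(\fg)$; since $\fz(\fg)\subseteq\fh$, one is then left with exactly the unresolved question of whether the coroots $h_\beta$ span all of $\fh$ over $k$. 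Your second fallback (``the explicit root computation showing that the coroots \dots already span $\fg_0$'') is an assertion of the paper's actual proof rather than a proof: in particular the delicate point that in types $B$ and $C$ the relation $\beta_1-\beta_2=\alpha_j$ only gives $c_1\beta_1^\vee-c_2\beta_2^\vee=c_3\alpha_j^\vee$ with $c_i\in\{1,2\}$, so that one generates only $\bZ[\tfrac{1}{2}]\Phi^\vee$ and must use $p\neq 2$, is invisible in your write-up. As written, the proposal establishes (3) only under the extra hypothesis that $\fg$ is simple as a Lie algebra, not in the generality claimed.
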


\begin{proof}
 If $\fu$ is elementary then, in particular, it is 
abelian and, hence, $P_\alpha$ is cominuscule by 
 \cite[2.2]{RRS}.  Conversely, assume $P_\alpha$ is 
 cominuscule.   Then $\fu$ is abelian by  \cite[2.2]{RRS}. 
 Since $G$ is a simple algebraic group, 
 each root space $g_\alpha$  is one dimensional generated by 
 a root vector $x_\alpha$.  We have $x_\alpha^{[p]} =0$ since 
 $p\alpha$ is not a root. Since root vectors generate $\fu$ and 
 $\fu$ is abelian, we conclude that the $[p]$-th power operation is trivial on $\fu$. 
Hence, $\fu$ is elementary. 

To prove (2), observe that because $\fu$ is a  
Lie ideal in $\fp$, we have $[\fu, \fp]\subset \fu$. 
By the structure theory for classical Lie algebras, 
for any $\alpha \in \Phi^+$ 
there exists $h_\alpha \in \fh$ such that 
$[h_\alpha, x_\alpha] = 2x_\alpha$. Hence, 
$\fu = [\fh, \fu] \subset [\fp, \fu]$.

Finally, we proceed to prove (3).
 Let $P=P_{\alpha_i}$, let 
$I = \Delta \backslash \{\alpha_i \}$ and 
let $\Phi_I \subset \Phi$ 
 be the root system corresponding to the 
subset of simple roots $I$.  
 We have $\fg = \fp \oplus \fu^-$ where 
$\fu^- = \sum\limits_{\beta \in \Phi^+ 
\backslash \Phi^+_I} kx_{-\beta}$.  
Note that $ \Phi^+ \backslash \Phi^+_I$ 
consists of all positive roots into 
which $\alpha_i$ 
 enters with coefficient $1$. Let 
$\beta \in \Phi^+ \backslash \Phi^+_I$ and let $\gamma$ 
 be any root. If $\beta + \gamma$ is a 
root, then $\alpha_i$ enters into $\beta+\gamma$ 
 with coefficient $0$ or $1$. Therefore, 
$x_{\beta + \gamma} \not \in   \fu^-$. 
 Hence, $[x_\beta, x_\gamma] \in \fp$. 
Since $x_\beta$ for $\beta 
\in  \Phi^+ \backslash \Phi^+_I$ generate 
$\fu$, we conclude that $[\fu, \fg] \subset \fp$. 

For the opposite inclusion, we first 
show that $\fh \subset [\fu, \fg]$. 
Let $S \subset \Phi^+ \backslash \Phi^+_I$ 
be the set of all positive 
roots of the form $a_1\alpha_{1} + \ldots + a_n\alpha_n$ such that 
$a_i = 1$ and $a_j \in \{0,1\}$ for all $j \neq i$.  
For any subset $J \subset \Delta$ of simple 
roots such that the subgraph of the Dynkin 
diagram corresponding to $J$ is connected, 
we have that $\sum\limits_{\alpha_j \in J} \alpha_j$ 
is a root (\cite[VI.1.6, Cor. 3 of Prop. 19]{Bour}). 
This easily implies that for any simple root 
$\alpha_j$, $ j \not = i$, we can find $\beta_1, \beta_2 \in S$ 
such that $\beta_2 - \beta_1=\alpha_j$. Hence, 
$\{ \beta \}_{\beta \in S}$  generate the integral
root lattice $\Z\Phi$. Consider the simply laced 
case first (A, D, E). Since the  bijection 
$\alpha \to \alpha^\vee$ is linear in this 
case, we  conclude that $\{ \beta^\vee \}_{\beta \in S}$  
generate the integer coroot lattice $\Z\Phi^\vee$. 
This, in turn, implies that $\{ h_\beta \}_{\beta \in S}$ 
generate the integer form $\Lie(T_\Z)$  of the 
Lie algebra $\Lie(T) =\fh$ over $\Z$, and, therefore, 
generate $\fh = \Lie(T_\Z) \otimes_\bZ k$ 
over $k$ (see \cite[II.I.11]{Jan}).  

In the non-simply laced case (B or C), the 
relation $\beta_1 - \beta_2 = \alpha_j$ leads to 
$c_1\beta^\vee_1 - c_2\beta^\vee_2 = c_3\alpha^\vee_j$   
where $c_1, c_2, c_3 \in \{1,2\}$. 
Hence, in this case  $\{ \beta \}_{\beta \in S}$  
generate the lattice $\Z[\frac{1}{2}]\Phi^\vee$. 
Since $p\not = 2$, this still implies that 
$\{ h_\beta \}_{\beta \in S}$ generate  
$\fh = \Lie(T_\Z) \otimes_\bZ k$ over $k$.   

In either case, since 
$h_{\beta} = [x_{\beta}, x_{-\beta}] \in [\fu, \fg]$ 
for $\beta \in S$, we conclude that 
$\fh \subset [\fu, \fg]$.  

The inclusion $\fh \subset [\fu, \fg]$ 
implies $[\fp, \fh]  \subset [\fp, [\fu, \fg]]$. Hence, 
by the Jacobi identity, we have 
\[ 
[\fp, \fh]  \subset [\fp, [\fu, \fg]] = 
[[\fp, \fu], \fg]] + [\fu, [\fp, \fg]] = 
[\fu, \fg] + [\fu, \fp] \subset  [\fu, \fg].
\]
Consequently,  $\fp = [\fp, \fh] + \fh \subset [\fu, \fg]$.
\end{proof}

We next show how to realize  the tangent 
bundle of $G/P$ for a  
cominuscule parabolic $P$ of a simple 
algebraic group $G$ as a cokernel bundle.

\begin{prop}\label{prop:tan}
Let $G$ be a simple algebraic group, and let 
$P$ be a cominuscule parabolic subgroup of $G$. 
Set $\fg = \Lie(G)$, $\fp = \Lie(P)$, and let 
$\fu$ be the nilpotent radical of $\fp$. 
Assume that $G\cdot \fu  \subset \bE(\dim (\fu),\fg)$ is isomorphic to
$G/P$ (for example, assume $p$ satisfies the conditions of Theorem~\ref{thm:sep}).  
We have isomorphisms of vector bundles on $G/P$:
\[
\cIm^{1,G/P}(\fg) \ \simeq \ \cL_{G/P}(\fp)
\] 
and
\[
\cCoker^{1,G/P}(\fg) \ \simeq \ \bT_{G/P}.
\]
\end{prop}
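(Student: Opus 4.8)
The plan is to deduce both isomorphisms from Theorem~\ref{thm:functor} by computing the single $P$-module $\Rad^1(\fu^*\fg)$. The hypothesis $G\cdot\fu \simeq G/P$ identifies the orbit $X = G\cdot\fu$ with $G/P$ and identifies the reduced stabilizer $H$ of $\epsilon = \fu$ with $P$. Taking $M = \fg$ to be the adjoint module and $j = 1$ in Theorem~\ref{thm:functor} gives $\cIm^{1,G/P}(\fg) \simeq \cL_{G/P}(\Rad^1(\fu^*\fg))$ as a subbundle of $\cL_{G/P}(\fg) = \fg\otimes\cO_{G/P}$, where the last equality uses Proposition~\ref{prop:cL}.(2) because $\fg$ is the restriction of a rational $G$-module. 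So everything reduces to identifying $\Rad^1(\fu^*\fg)$ as a $P$-module.

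First I would compute this radical directly from the definitions. For an elementary subalgebra $\epsilon$, the radical $\Rad^1(\epsilon^*M)$ is the image of the augmentation ideal of $\fu(\epsilon)$ acting on $M$, i.e. $\epsilon\cdot M$; for the adjoint module $M = \fg$ the action of $x\in\epsilon$ is $\ad(x)$, so $\Rad^1(\fu^*\fg) = [\fu,\fg]$. The key non-formal input is then Proposition~\ref{prop:comin}.(3), which for a cominuscule $P$ asserts exactly $\fp = [\fu,\fg]$. This yields the $P$-module identification $\Rad^1(\fu^*\fg) = \fp$ and hence $\cIm^{1,G/P}(\fg)\simeq\cL_{G/P}(\fp)$, the first asserted isomorphism.

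For the second isomorphism I would feed the short exact sequence of $P$-modules $0 \to \fp \to \fg \to \fg/\fp \to 0$ into the functor $\cL_{G/P}$, which is exact by Proposition~\ref{prop:cL}.(3). Because $\cL_{G/P}(\fg) = \fg\otimes\cO_{G/P}$ and $\cL_{G/P}(\fp) = \cIm^{1,G/P}(\fg)$ sits inside it as the subbundle just identified, the cokernel $\cCoker^{1,G/P}(\fg) = (\fg\otimes\cO_{G/P})/\cIm^{1,G/P}(\fg)$ is identified with $\cL_{G/P}(\fg/\fp)$. Finally Proposition~\ref{tan-cot}.(1) gives $\cL_{G/P}(\fg/\fp)\simeq\bT_{G/P}$, completing the argument.

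There is little genuine obstacle left once the earlier structural results are granted: the proof is essentially the concatenation of Theorem~\ref{thm:functor}, the radical computation, and the exactness of $\cL_{G/P}$. The one point requiring care is checking that the hypothesis $G\cdot\fu\simeq G/P$ really makes $P$ the reduced stabilizer $H$ of $\fu$, so that Theorem~\ref{thm:functor} applies with $H = P$, and that the $P$-module structure on $\Rad^1(\fu^*\fg)$ produced by Theorem~\ref{thm:functor} agrees with the adjoint $P$-module structure on $\fp$ used in Proposition~\ref{prop:comin}.(3); both are immediate from the definitions, so the real content sits in the already-established Proposition~\ref{prop:comin}.(3).
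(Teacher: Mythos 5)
Your proposal is correct and follows essentially the same route as the paper's proof: both reduce the first isomorphism to the identity $\Rad(\fu^*\fg)=[\fu,\fg]=\fp$ from Proposition~\ref{prop:comin}(3) combined with the induction-functor description of the equivariant bundle (the paper cites Theorem~\ref{thm:orbit2} and Proposition~\ref{prop:cL}, which together amount to your use of Theorem~\ref{thm:functor}), and both obtain the cokernel statement by applying the exact functor $\cL_{G/P}$ to $0\to\fp\to\fg\to\fg/\fp\to 0$ and invoking Proposition~\ref{tan-cot}(1). No gaps.
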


\begin{proof} Let $X = G/P$, and let $\epsilon = \fu$. Then 
$\Rad(\epsilon^*\fg)=[u, \fg] = \fp$ 
by Prop.~\ref{prop:comin}.  
Theorem~\ref{thm:orbit2} and 
Proposition \ref{prop:cL} give an isomorphism 
$$
\cIm^{1,X}(\fg) \ \simeq \ \cL_{G/P}(\fp)
$$ 
as bundles on $X$.  
Applying Proposition \ref{prop:cL} again, 
we conclude that the short exact 
sequence of rational $P$-modules \
$0 \to  \fp \to \fg \to \fg/\fp \to 0$ \
determines a short exact sequence of bundles on $X$:
$$
0 \to \cL_{G/P}(\fp) \ \to \ \fg \otimes \cO_X \ 
\to \ \cL_{G/P}(\fg/\fp) \to 0.
$$
Applying Proposition \ref{tan-cot}, we conclude that 
$$\cCoker^{1,X}(\fg) \ \simeq \ 
( \fg \otimes \cO_X )/\cIm^{1,X}(\fg) \simeq 
( \fg \otimes \cO_X )/ \cL_{G/P}(\fp) \simeq 
\cL_{G/P}(\fg/\fp) \simeq \bT_{G/P}.$$ 
\end{proof}

We offer some other interesting bundles coming from the adjoint
representation of $\fg$.

\begin{prop} \label{prop:cominbund} Assume $p\not = 2$.
Under the assumptions of Proposition~\ref{prop:tan}, we have
\[
\cIm^{2,G/P}(\fg) \simeq \cL_{G/P}(\fu), 
\]
where $\fu$ is  viewed
as a submodule  of $\fp$ under the adjoint action of $P$. 
\end{prop}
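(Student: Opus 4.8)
The plan is to deduce this directly from Theorem~\ref{thm:functor}, reducing the statement to a computation of the second radical of $\fg$ viewed as a module over the restricted enveloping algebra $\fu(\epsilon)$ of $\epsilon = \fu$. Taking $M = \fg$ (the adjoint representation), $\epsilon = \fu$, and $H = P$ (the reduced stabilizer, so that $X = G\cdot\fu \simeq G/P$ by the standing hypothesis), Theorem~\ref{thm:functor} with $j = 2$ gives a $P$-equivariant isomorphism
\[
\cIm^{2,G/P}(\fg) \ \simeq \ \cL_{G/P}(\Rad^2(\epsilon^*\fg)).
\]
Thus it suffices to identify $\Rad^2(\epsilon^*\fg)$ with $\fu$ as a $P$-submodule of $\fg$.

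First I would recall that, since $\epsilon = \fu$ is elementary, $\fu(\epsilon)$ is the local (indeed commutative) algebra $k[t_1,\dots,t_r]/(t_i^p)$ whose Jacobson radical $J$ is the augmentation ideal generated by $\epsilon$. For any $\fu(\epsilon)$-submodule $N \subset \fg$ one then has $J\cdot N = \fu\cdot N = [\fu,N]$, the span of the adjoint actions of $\fu$ on $N$, so that the radical series is computed by iterated commutators: $\Rad^1(\epsilon^*\fg) = J\fg = [\fu,\fg]$ and $\Rad^2(\epsilon^*\fg) = J\cdot\Rad^1(\epsilon^*\fg) = [\fu,[\fu,\fg]]$. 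Now I would invoke Proposition~\ref{prop:comin}: part (3) gives $[\fu,\fg] = \fp$ (using that $P$ is cominuscule), and part (2) gives $[\fu,\fp] = \fu$ (using $p \neq 2$). Chaining these yields $\Rad^2(\epsilon^*\fg) = [\fu,\fp] = \fu$. This is precisely the computation already carried out in the proof of Proposition~\ref{prop:tan} for the first radical, pushed one step further.

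The only point requiring care --- and the closest thing to an obstacle --- is keeping the identifications $P$-equivariant while matching the $\fu(\epsilon)$-radical series with the commutator filtration. Since $P$ normalizes $\fu$ (as $\fu$ is the nilpotent radical of $\fp = \Lie P$), its adjoint action stabilizes the augmentation ideal $J$ and commutes with the formation of $\Rad^j(\epsilon^*\fg)$; hence each $\Rad^j(\epsilon^*\fg)$ is genuinely a $P$-submodule of $\fg$, and the resulting $\Rad^2(\epsilon^*\fg) = \fu$ carries exactly the adjoint $P$-action asserted in the statement. Feeding $\Rad^2(\epsilon^*\fg) = \fu$ back into the displayed isomorphism, and using the naturality of $\cL_{G/P}(-)$ from Proposition~\ref{prop:cL}, then gives $\cIm^{2,G/P}(\fg) \simeq \cL_{G/P}(\fu)$, completing the argument.
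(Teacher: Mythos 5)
Your proposal is correct and follows essentially the same route as the paper: apply Theorem~\ref{thm:functor} with $j=2$ and compute $\Rad^2(\epsilon^*\fg) = [\fu,[\fu,\fg]] = [\fu,\fp] = \fu$ via parts (3) and (2) of Proposition~\ref{prop:comin}. The paper's proof is just a terser version of this; your extra remarks on identifying the radical series with the commutator filtration and on $P$-equivariance are sound but implicit in the paper.
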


\begin{proof}  Let $\epsilon = \fu$. By 
Proposition \ref{prop:comin}, $\Rad^2(\epsilon^* \fg) = 
[\fu, [\fu, \fg]] = \fu$. 
Hence,    $\cIm^{2,G/P}(\fg) \simeq \cL_{G/P}(\fu)$ 
by Theorem~\ref{thm:functor}.
\end{proof}

In the next three examples we specialize 
Proposition~\ref{prop:cominbund} to the 
simple groups of types $A$, $B$, and $C$.

\begin{ex}
\label{cot-sln} 
Let $G= \SL_n$, let $P = P_{r,n-r}$ be the standard 
maximal parabolic corresponding to the simple root 
$\alpha_r$, and let $X = G \cdot \fu \subset \bE(r(n-r), \fsl_n)$. 
Assume $X \simeq G/P =
\Grass(r,n)$ (e.g., $p>2n-2$).   We have an isomorphism 
of vector bundles on  $X$
\[
\cIm^{2,X}(\fg) \simeq \Omega_X 
\simeq \gamma_r \otimes \delta_{n-r}.
\] 
Indeed, this follows immediately from 
Propositions~\ref{prop:cominbund} and \ref{tan-cot}(3).
\end{ex}

\begin{ex} Let $G=\SO_{2n+1}$  be a simple 
algebraic group of type $B_n$ so that $\fg = \fso_{2n+1}$, 
and let   $P= P_{\alpha_1}$ be the standard cominuscule
parabolic subgroup of $G$.  We choose the symmetric 
form, the Cartan matrix, and the simple roots 
as in \cite[12.3]{EW06}. 
Let $\fu$ be the nilpotent radical of $\fp = \Lie(P)$,  and 
set $X = G \cdot \fu \ \subset \ \bE(2n-1,\fg)$.   Assume $p>4n-2$.  
Then $X$ is isomorphic to $G/P$  by Theorem~\ref{thm:sep}. 

We claim that
\[\cIm^{2, X}(\fg) \ = \ \cL_{G/P}(\fu) \ \simeq \ \cL_{G/P}(V_{2n-1}) \ \simeq \ \Omega_{\bP^{2n-1}}.\] 
Here, $V_{2n-1}$ is the natural module 
for the block of the Levy factor of $P$ which has type $B_{n-1}$. 
More precisely, we have $P = LU$ where 
$L$ is the Levi factor and $U$ 
is the unipotent radical.  The Levi factor 
$L$  is a block matrix group 
with blocks of size $2$ and $2n-1$.  Factoring 
out the subgroup concentrated 
in the block of size $2$, we get a simple 
algebraic group isomorphic to $\SO_{2n-1}$.   
We take $V_{2n-1}$ to be the standard 
module for this group inflated to the parabolic $P$.

To justify these claims, we note that the 
isomorphism $\cIm^{2, X}(\fg) = \cL_{G/P}(\fu)$ 
is the content of Proposition~\ref{prop:cominbund}, 
whereas  the  isomorphism 
$\cIm^{2, X}(\fg) = \cL_{G/P}(V_{2n-1})$ 
follows from an isomorphism of 
$P$-modules $\fu \simeq V_{2n-1}$ which 
can be checked by direct inspection.  The 
asserted isomorphism 
$\cIm^{2, X}(\fg) \ \simeq \ \Omega_{\bP^{2n-1}}$
follows from Proposition~\ref{tan-cot}.2, 
since the condition on $p$  guarantees 
the existence of a nondegenrate invariant form on $\fg = \Lie(G)$  
(see \cite{Sel}). 
\end{ex}

\begin{ex}
\label{cot-spn} 
Let $G = \Sp_{2n}$, $P = P_{\alpha_n}$, and 
assume that $p>4n-2$.
% does not divide $n+1$. 
We have an isomorphism of vector bundles on 
$\bE({ n+1 \choose 2}, \fg) \simeq \LG(n,V)$:
\[
\cIm^2(\fg) \simeq S^2(\gamma_n).
\]
Just as in the previous examples, this follows 
immediately from \cite[2.12]{CFP3}, \cite[2.9]{CFP3}, and Theorem~\ref{thm:sep} 
which allow us to identify
$\bE({ n+1 \choose 2}, \fg)$ with $\LG(n,V)$, 
and Propositions~\ref{prop:cominbund} and 
\ref{tan-cot}(4). Proposition~\ref{tan-cot} 
is applicable here since for $p>3$  
there exists a nondegenerate $\Sp_{2n}$-invariant 
symmetric bilinear form on $\fsp_{2n}$ 
(see \cite[p.48]{Sel}).   
\end{ex}

The following example complements 
Example~\ref{cot-sln}, evaluating 
kernel bundles rather than image bundles.

\begin{prop}
\label{prop:kernel}
Let $G=\SL_n$, and let $P=P_{r, n-r} \subset G$ 
be a cominuscule parabolic. Set $\fg = \Lie(G)$, $\fp = \Lie(P)$,  
and let $\fl$, $\fu$ be the Levi subalgebra and 
the nilpotent radical of $\fp$.   Let 
$X = G \cdot \fu \subset \bE(r,\fg)$ where $r = \dim \fu$.  
Assume $p>2n-2$ so that $X$ is
isomorphic to $G/P$. 
Then we have an isomorphism of bundles on $X \simeq G/P$:
\[
\cKer^{1,X}(\fg) \ \simeq \  \cL_{G/P}(\fu) \simeq \Omega_X. 
\] 
\end{prop}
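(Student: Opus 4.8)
The plan is to reduce everything to a socle computation via the induction functor and then recognize the answer as a cotangent bundle. Since $p>2n-2$ forces $p\nmid n$, Corollary~\ref{cor:gln} (via Theorem~\ref{thm:sep}) guarantees that the orbit map is separable, so $X=G\cdot\fu\simeq G/P$ and the reduced stabilizer of $\epsilon=\fu$ is $P$. Applying Theorem~\ref{thm:functor} to the adjoint module $M=\fg$ then gives, as $G$-equivariant bundles,
\[
\cKer^{1,X}(\fg)\ \simeq\ \cL_{G/P}\bigl(\Soc(\epsilon^*\fg)\bigr),
\]
where $\Soc(\epsilon^*\fg)$ carries the $P$-module structure obtained by restricting the adjoint action. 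Thus the whole problem reduces to identifying $\Soc(\epsilon^*\fg)$ as a $P$-module.

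The key elementary observation is that $\fu(\epsilon)\cong k[t_1,\dots,t_s]/(t_i^p)$ (with $s=\dim\fu=r(n-r)$) is a local commutative algebra whose radical is the augmentation ideal $(t_1,\dots,t_s)$. Hence the socle of any $\fu(\epsilon)$-module is precisely the subspace killed by every $t_i$. As $\fu$ acts on $\fg$ by the adjoint action, this identifies
\[
\Soc(\epsilon^*\fg)\ =\ \{x\in\fg:[\fu,x]=0\}\ =\ \fz_{\fg}(\fu),
\]
the centralizer of $\fu$ in $\fg$, which is automatically $P$-stable since $\fu$ is $P$-stable.

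Next I would compute this centralizer directly. Writing $\gl_n$ in $(r,n-r)$-block form, an element $\begin{pmatrix}A&B\\ C&D\end{pmatrix}$ commutes with every $\begin{pmatrix}0&B'\\ 0&0\end{pmatrix}\in\fu$ exactly when $C=0$ and $AB'=B'D$ for all $B'$; the latter Sylvester condition forces $A=\lambda I_r$ and $D=\lambda I_{n-r}$ for a common scalar $\lambda$. So $\fz_{\gl_n}(\fu)$ consists of the matrices $\lambda I_n+N$ with $N\in\fu$. Intersecting with $\fsl_n$ imposes $n\lambda=0$, and since $p\nmid n$ this gives $\lambda=0$, whence $\fz_{\fsl_n}(\fu)=\fu$. (One can also see the equality painlessly: $\fu$ is abelian, so $\fu\subseteq\fz_{\fg}(\fu)$, and the dimension count $\dim\fz_{\fsl_n}(\fu)=r(n-r)=\dim\fu$ closes the gap.) This is an equality of $P$-submodules of $\fg$, so $\Soc(\epsilon^*\fg)\cong\fu$ as a $P$-module.

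Combining these steps yields $\cKer^{1,X}(\fg)\simeq\cL_{G/P}(\fu)$. To finish I would invoke Proposition~\ref{tan-cot}.2: since $p\nmid n$ the trace (Killing) form on $\fsl_n$ is a nondegenerate $G$-invariant symmetric bilinear form, so $\cL_{G/P}(\fu)\simeq\Omega_{G/P}=\Omega_X$; equivalently, $\fu\cong W\otimes(V/W)^\#$ as a $P$-module gives $\cL_{G/P}(\fu)\simeq\gamma_r\otimes\delta_{n-r}\simeq\Omega_X$ by Example~\ref{cot-sln}. The only genuine work is the centralizer calculation, and the one subtlety there is remembering to use $p\nmid n$ to discard the central scalar — precisely what cuts the socle down from the one-dimension-larger answer in $\gl_n$ to $\fu$ in $\fsl_n$. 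As a cross-check, the same identification $\Soc(\epsilon^*\fg)\cong(\fg/\fp)^\#\cong\fu$ follows by dualizing the image computation $\cIm^{1,X}(\fg)\simeq\cL_{G/P}(\fp)$ of Proposition~\ref{prop:tan} through the self-duality $\fg\cong\fg^\#$ and Proposition~\ref{dual}.
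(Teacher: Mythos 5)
Your proposal is correct and follows essentially the same route as the paper: identify $\Soc(\epsilon^*\fg)$ with the centralizer $C_{\fg}(\fu)$, show it equals $\fu$ using $p\nmid n$, and then apply Theorem \ref{thm:functor} together with the nondegeneracy of the trace form to conclude $\cL_{G/P}(\fu)\simeq\Omega_X$. The only (immaterial) difference is in the centralizer computation itself — you solve the Sylvester condition directly in $\gl_n$ in block form and then intersect with $\fsl_n$, whereas the paper argues that $C_{\fg}(\fu)/\fu = C_{\fl}(\fu)$ lies in the center of the Levi factor $\fl$ and then checks that no central element of $\fl$ commutes with $\fu$.
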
 

\begin{proof}  Let $\epsilon = \fu$ which 
is elementary by Proposition \ref{prop:comin}.
We have $\Soc(\epsilon^*(\fg)) = C_{\fg}(\fu)$, 
the centralizer of $\fu$ in $\fg$. Since $\fp$ 
is the normalizer of $\fu$, we have 
$C_{\fg}(\fu) \subset \fp$. Moreover, since $\fu \subset \fp$ 
is a  Lie ideal, so is $C_\fg(\fu)$.  
Since $\fp/\fu \simeq \fl$ is reductive, we conclude that 
$C_{\fg}(\fu)/\fu = C_{\fl}(\fu)$  belongs 
to the center of $\fl$.  We claim that this center is trivial. 

Note that in the usual matrix 
representation, $\fp$ is the set of all matrices $(a_{i,j})$
with $a_{i,j} = 0$ whenever both $i > r$ and $j \leq n-r$.
Thus $\fu = \fu_{r,n-r}$ consists of all matrices which are nonzero 
only in the upper $r$ rows and rightmost $n-r$ columns,
and $\fl$ is the algebra of all $n \times n$ matrices that
are nonzero only in the upper left $r\times r$ block and 
the lower right $(n-r) \times (n-r)$ block. So the 
center of $\fl$ consists of all matrices of the form
\[
c \ = \ \begin{pmatrix} aI_r & 0 \\ 0 & bI_{n-r} \end{pmatrix}
\]
were $a, b \in k$ have the property that $ra+(n-r)b = 0$ 
and $I_r$ is the $r \times r$ identity matrix. If 
$x = \begin{pmatrix} 0 & X \\ 0 & 0 \end{pmatrix} \in \fu$, 
where $X$ is an $r \times (n-r)$  block, then 
an easy calculation yields that $[c,x] = cx-xc = (a-b)x$.
Since $p$ does not divide $n$, we conclude that $\ell$ does  
has a trivial center, and, hence,  $\dim C_\fl(\fu)=0$. 

Since $\fp = \fl \oplus \fu$, and no elements  in $\fl$ centralize $\fu$, we conclude that 
 $C_{\fg}(\fu) \simeq u \oplus C_\fl(\fu)$.  
  Combining this  
with Theorem~\ref{thm:functor}  we get the following isomorphisms:
\begin{align*}\Ker^{1,G/P}(\fg) \simeq  
\cL_{G/P}(C_{\fg}(\fu)) \simeq &\cL_{G/P}(\fu). 
\end{align*}

Since our assumption on $p$ implies that the Trace form on $\fsl_n$ 
is non-degenerate, we conclude that $\cL_{G/P}(\fu)$ is isomorphic to the cotangent 
bundle $\Omega_{G/P}$ by Proposition~\ref{tan-cot}.
\end{proof}

%%%%%%%%%%%%%%%%%%%%%%%%%%%%%%%%
%%%%%%%Section on semi-direct products%%%%%%%%%%%%%
%%%%%%%%%%%%%%%%%%%

\section{Vector bundles associated to semi-direct products}
\label{sec:Gsemi}

In this section, we provide a reinterpretation of 
``$\GL$-equivariant $kE$-modules" considered in 
\cite{CFP2} as modules for the subgroup scheme 
$G_{(1),n} = \bV_{(1)} \rtimes\GL_n$ of the algebraic group 
$\bV\rtimes \GL_n$ of Example 1.10 of \cite{CFP3}.   This leads 
to consideration of rational representations for 
semi-direct product subgroup schemes 
$\bW_{(1)} \rtimes H$ of the affine algebraic group 
$\bW \rtimes H$, where $H$ is any affine algebraic group 
and $W$ is any faithful rational $H$-representation.

The representations of $G_{(1),n}$ and 
$\bW_{(1)} \rtimes H$ we consider do not typically 
extend to the algebraic 
groups $\bV\rtimes \GL_n$ and $\bW \rtimes H$.

\begin{notation}
\label{not:V}
Throughout this section, $V$ is an $n$-dimensional 
vector space with chosen basis, so that
we may identify $\GL(V)$ with $\GL_n$ and $V$ 
with the defining representation of $\GL_n$.  Let 
$\bV = \Spec(S^*(V^\#)) \simeq \bG_a^{\oplus n}$ 
be the vector group associated to $V$, and let 
$\bV_{(1)} \simeq (\bG_{a(1)})^{\oplus n}$ be the 
first Frobenius  kernel of $\bV$. The standard 
action of $\GL_n$ on $V$ induces an action on the 
vector group $\bV$. Moreover, it is straightforward 
that this action stabilizes the subgroup scheme 
$\bV_{(1)} \subset \bV$. Hence, we can form the following 
semi-direct products:  
\begin{equation}
\label{eq:V}
\xymatrix@-0.9pc{G_{1,n} \ar@{=}[r]^-{\rm def}& 
\bV \rtimes \GL_n & \quad G_{(1),n}  
\ar@{=}[r]^-{\rm def}& \bV_{(1)} \rtimes\GL_n.}
\end{equation}
Let 
\begin{equation}
\label{eq:g}
\xymatrix@-0.9pc{\fg_{1,n}\ar@{=}[r]^-{\rm def}&  
\Lie(G_{(1),n}) = \Lie(G_{1,n})}.
\end{equation}
We view $V \simeq \Lie(\bV_{(1)}) \subset \fg_{1,n}$  
as an elementary subalgebra of $\fg_{1,n}$ which is 
also a Lie ideal stable under the adjoint action of $G_{1,n}$.  
\end{notation}

For any $r$-dimensional subspace $\epsilon  \subset V 
\subset \fg_{1,n}$, we consider the adjoint 
action of $G_{1,n}$ on $\epsilon$.   Here, $V$ is stable under 
the adjoint action, and the action of $\bV$ on $V$ is trivial.
Moreover, the restriction of this adjoint action on $\epsilon$
to $\GL_n \subset G_{1,n}$ can be identified with the 
action of $\GL_n$ on $\epsilon \in \Grass(r,n)$ determined by
left multiplication by $n\times n$ matrices on a column vector.
This left multiplication map $\GL_n \to \Grass(r,n)$ is locally a 
product projection and thus separable.  Thus, the orbit map
$\phi_{\epsilon}: G_{1,n} \to \bE(r,\fg_{1,n})$ can be identified 
with the composition 
$$G_{1,n}\  \ \to \GL_n \ \to \ \Grass(r,n) \subset \ \bE(r,\fg_{1,n})$$
and thus induces an isomorphism 
$$G_{1,n} \cdot \epsilon \simeq \Grass(r,n).$$
In particular, the orbit map restricted to $\GL_n$, 
$\phi_\epsilon: \GL_n \ \to \ G_{1,n}\cdot \epsilon$, is
separable.

\vskip .2in

We recall the notion of a $\GL$-equivariant 
$kE$-module considered in \cite{CFP2}.

\begin{defn}
Let $E$ be an elementary abelian $p$-group 
of rank $n$ and choose some linear 
map  $V \to \Rad(kE)$  such that 
the composition $V \to \Rad(kE) \to \Rad(kE)/\Rad^2(kE)$  is an isomorphism.  
This determines an identification 
$kE \simeq S^*(V)/\langle v^p,v\in V \rangle$.  
Then $M$ is said to be a {\it $\GL$-equivariant
$kE$-module} (in the terminology of  \cite[3.5]{CFP2}) 
if $M$ is provided with two pairings
\begin{equation}
\label{pair} 
S^*(V)/\langle v^p,v\in 
V \rangle \otimes M \ \to \ M, 
\quad \GL(V) \times M \to M
\end{equation}
such that the second pairing makes $M$ into a rational $\GL(V)$-module 
and the first pairing is $\GL(V)$-equivariant 
with respect to the diagonal
action of $\GL(V)$ on  $S^*(V)/\langle v^p,
v\in V \rangle \otimes M$.
\end{defn}

As the next proposition explains, the consideration 
of $\GL$-equivariant $kE$-modules
has a natural interpretation as $G_{(1),n}$-representations 
for $G_{(1),n} = \bV_{(1)}\rtimes \GL_n$.

\begin{prop}
\label{prop:semi}
There is a natural equivalence of categories between 
the category of rational modules for the
group scheme $G_{(1),n}$ and the category of 
``$\GL$-equivariant $kE$-modules".
\end{prop}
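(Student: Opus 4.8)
The plan is to factor the desired equivalence through the standard identification of rational representations of the infinitesimal group scheme $\bV_{(1)}$ with modules over $kE$, and then to read off the semidirect product structure as an equivariance condition.

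First I would recall that $\bV_{(1)} = (\bG_{a(1)})^{\oplus n}$ is an infinitesimal group scheme of height one whose Lie algebra $\Lie(\bV_{(1)}) = V$ is abelian with trivial $[p]$-operation. Hence its measure algebra $\operatorname{Dist}(\bV_{(1)}) = k[\bV_{(1)}]^\#$ is the restricted enveloping algebra $\fu(V) = S^*(V)/\langle v^p, v \in V\rangle$, which is exactly the algebra $kE$ under the identification $kE \simeq S^*(V)/\langle v^p, v\in V\rangle$ fixed in the definition recalled above. Dualizing the comodule structure therefore gives an equivalence between the category of rational $\bV_{(1)}$-modules and the category of $kE$-modules; this is the standard equivalence between representations of a height-one infinitesimal group scheme and modules over the restricted enveloping algebra of its Lie algebra (see, for example, \cite{Jan}).

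Next I would invoke the general description of rational representations of a semidirect product $N \rtimes H$ of affine group schemes, applied to $N = \bV_{(1)}$ and $H = \GL_n$. Since every point of $N \rtimes H$ factors uniquely as $n\cdot h$, a rational $N\rtimes H$-module on $M$ is the same datum as a rational $N$-module structure $\rho_N$ and a rational $H$-module structure $\rho_H$ on $M$, subject to the single conjugation relation $\rho_H(h)\,\rho_N(n)\,\rho_H(h)^{-1} = \rho_N({}^h n)$, required functorially in the test algebra $R$ for all $h \in H(R)$, $n \in N(R)$, where ${}^h n = hnh^{-1}$. Under the equivalence of the previous paragraph the $\rho_N$'s correspond to $kE$-module structures, and the adjoint action of $\GL_n$ on $\bV_{(1)}$ induces on $\Lie(\bV_{(1)}) = V$ precisely the standard linear action of $\GL(V)$; consequently the induced action on $\operatorname{Dist}(\bV_{(1)}) = kE$ is the algebra automorphism extending the standard action of $\GL(V)$ on $V \subset kE$.

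Translating the conjugation relation across this identification, $\rho_H(h)\,\rho_N(n)\,\rho_H(h)^{-1} = \rho_N({}^h n)$ becomes, at the level of the algebra $kE$, the identity $\rho_H(h)(a\cdot m) = (h\cdot a)\cdot\rho_H(h)(m)$ for $a \in kE$ and $m \in M$; that is, the module pairing $kE\otimes M \to M$ is $\GL(V)$-equivariant for the diagonal action of $\GL(V)$ on $kE \otimes M$. This is exactly the compatibility in the definition of a $\GL$-equivariant $kE$-module. Since morphisms on both sides are the $k$-linear maps commuting simultaneously with the $kE$-action and with the rational $\GL(V)$-action, the assignment $M \mapsto M$ is fully faithful and essentially surjective, yielding the asserted equivalence of categories. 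I expect the one point requiring genuine care to be this last translation: verifying that the adjoint action of $\GL_n$ on $\operatorname{Dist}(\bV_{(1)}) = kE$ matches the $\GL(V)$-action used to define $\GL$-equivariant $kE$-modules, and that the semidirect-product relation rewrites precisely into equivariance of the pairing rather than into some twisted variant.
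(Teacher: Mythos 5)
Your proposal is correct and follows essentially the same route as the paper: identify $k\bV_{(1)}=\fu(\Lie(\bV))$ with $kE$, and then observe that the semidirect-product relation $(0,g)\cdot(v,1)=({}^gv,1)\cdot(0,g)$ translates exactly into $\GL(V)$-equivariance of the pairing $kE\otimes M\to M$. Your write-up is somewhat more explicit about the height-one distribution-algebra identification and the general description of representations of $N\rtimes H$, but the substance matches the paper's argument.
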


\begin{proof} 
Assume that we are given a functorial action 
of the semi-direct product 
$$(G_{(1),n})(A) = \bV_{(1)}(A) \rtimes \GL_n(A)\quad 
\text{on~}  M\otimes A$$
as $A$ runs over commutative $k$-algebras.
We view this as a group action of pairs 
$(v,g) = (v,1)\cdot (0,g)$ on $M$.  Since
$(0,g) \cdot (v,1) = ({}^gv,g) = ({}^gv,1)\cdot (0,g)$ 
in the semi-direct product, we conclude 
for any $m \in M$ that the action of $(0,g)$ on  
$(v,1)\circ m$ equals the action of $({}^gv,1)$ on $(0,g)\circ m$.
This is precisely the condition that the action 
of $\bV_{(1)} \times M \to M$ is $\GL_n$-equivariant for
the diagonal action of $\GL_n$ on $\bV_{(1)} \times M$.
Consequently, once the identification 
$kE \simeq k\bV_{(1)} = \fu(\Lie(\bV))$ is chosen, 
giving a $\GL_n$-equivariant action 
$kE\times M \to M$ is the same as giving actions of 
$\bV_{(1)}$ and $\GL_n$ on $M$ which
satisfy the condition that this pair of actions 
determines an action of the semi-direct product.

Conversely, given a $\GL_n$-equivariant $kE$-module $N$, 
it is straightforward to check
that the actions of $\GL_n$ and $kE \simeq k\bV_{(1)}$ 
determine an action of $G _{(1),n}$
on the underlying vector space of $N$.
\end{proof}

Observe that we have $\GL_n$ acting on $\fg_{1,n}$ by  
restricting the adjoint action of $G_{1,n}$ on its Lie algebra
to $\GL_n \ \subset \ G_{1,n}$. 
This, in turn, makes $\bE(r,\fg_{1,n})$ into a $\GL_n$-variety. 
We next observe that rational $G_{(1),n}$-representations 
(even those which are not
restrictions of $G_{1,n}$-representations) 
lead to $\GL_n$-equivariant sheaves on $\Grass(r,V)$.

For the rest of this section, we will require slight 
generalizations of several statements occurring earlier in the paper. 
the proofs of these generalizations are identical to the ones used 
to show the original conclusions.

\begin{remark}
\label{rem:gen} 
Let $\wt G$ be an affine group scheme, 
let $\fg = \Lie(\wt G)$,  and let $G \hookrightarrow \wt G$ 
be a closed, reduced algebraic subgroup of $\wt G$. 
Let $\epsilon \in \bE(r, \fg)$, and let 
$X = G \cdot \epsilon \subset \bE(r, \fg)$ 
be the orbit of  $\epsilon$ under the action of 
$G$ on $\bE(r, \fg)$ induced by the adjoint 
action of $\wt G$ on $\fg$.  Let $M$ a  rational 
$\wt G$-module.   Then proofs of 
Corollary~\ref{cor:equiv}, Theorem~\ref{thm:orbit2}, 
and Theorem~\ref{thm:functor} apply to prove the corresponding
statements for 
$\cIm^{j, X}(M)$, $\cKer^{j, X}(M)$, and $\cCoker^{j, X}(M)$
in this slightly modified context.

In particular, the aforementioned results are applicable to the 
situation $\wt G = G_{1,n}$ and $G = \GL_n$.
\end{remark}

Using the $\GL_n$ equivariance of image and kernel 
sheaves, we obtain the following comparison. 

\begin{thm}
\label{thm:identify1}
Let $M_{|E}$ denote a $kE$-module associated 
to a rational $G_{(1),n}$-module $M$.
Choose some $r, 1 \leq r < n$, and some $j$ 
with $1 \leq j \leq (p-1)r$.  
Let $\epsilon \subset V \subset \fg_{1,n}$ be 
an $r$-dimensional subspace.  Then there are natural
identifications of $\GL_n$-equivariant vector bundles 
on $X = \GL_n \cdot \epsilon \simeq \Grass(r,V)
\ \subset \bE(r,\fg_{1,n})$,
$$
\cIm^{j,X}(M) \simeq  \cIm^j(M_{|E}), \quad 
\cKer^{j,X}(M) \simeq \cKer^j(M_{|E}),
$$
where the vector bundles 
$\cIm^j(M_{|E}), \ \cKer^j(M_{|E})$ on $\Grass(r,V)$ 
are those constructed in \cite{CFP2}.
\end{thm}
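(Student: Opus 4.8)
The plan is to reduce the assertion to an equality of subsheaves of the common ambient sheaf $M \otimes \cO_{\Grass(r,V)}$, checked chart by chart, where the two constructions become literally the same. First I would record the basic geometry. Because $V \subset \fg_{1,n}$ is an elementary subalgebra, every $r$-plane contained in $V$ is again elementary, so $\Grass(r,V) \subset \bE(r,\fg_{1,n})$; since $\GL_n$ acts transitively on the $r$-planes of $V$, the orbit $X = \GL_n \cdot \epsilon$ is all of $\Grass(r,V)$, which is closed (hence equal to its closure $W$) in $\bE(r,\fg_{1,n})$. By Remark~\ref{rem:gen} the sheaves $\cIm^{j,X}(M)$ and $\cKer^{j,X}(M)$ are $\GL_n$-equivariant subsheaves of $M \otimes \cO_X$, while the sheaves $\cIm^j(M_{|E})$, $\cKer^j(M_{|E})$ of \cite{CFP2} are $\GL$-equivariant subsheaves of $M_{|E}\otimes \cO_{\Grass(r,V)}=M \otimes \cO_{\Grass(r,V)}$. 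It therefore suffices to prove that these pairs of subsheaves coincide.

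Next I would fix a basis $x_1, \dots, x_N$ of $\fg_{1,n}$ (with $N=\dim \fg_{1,n}$) adapted to $\fg_{1,n}=V\oplus \gl_n$, so that $x_1,\dots,x_n$ is a basis of $V=\Lie(\bV_{(1)})$ and $x_{n+1},\dots,x_N$ span $\gl_n$, and then apply the affine-chart description of Theorem~\ref{thm:compare} together with Definition~\ref{defn:localj}. The decisive observation is that a chart $\cU_\Sigma$ of $\Grass(r,\fg_{1,n})$ meets $X$ precisely when $\Sigma \subset \{1,\dots,n\}$, in which case $\cU_\Sigma \cap X$ is exactly the standard chart of $\Grass(r,V)\simeq\Grass(r,n)$ indexed by $\Sigma$. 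On such a chart every point is an $r$-plane lying inside $V$, so the coordinate functions $Y_{i,s}^{X,\Sigma}$ vanish for all $i>n$, and the defining operator collapses to $\Theta_s^{X,\Sigma}=\sum_{i=1}^n x_i \otimes Y_{i,s}^{X,\Sigma}$, involving only the $V$-directions.

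It then remains to recognize this restricted operator as the one used in \cite{CFP2}. Under the identification $kE \simeq k\bV_{(1)} = \fu(V)$ fixed in Proposition~\ref{prop:semi}, the action of $x_1,\dots,x_n$ on $M$ is exactly the $kE$-action on $M_{|E}$, while $Y_{1,s}^{X,\Sigma},\dots,Y_{n,s}^{X,\Sigma}$ are the standard coordinates on the chart of $\Grass(r,V)$; hence $\Theta_s^{X,\Sigma}$ agrees term for term with the operator whose images and kernels cut out $\cIm^j(M_{|E})$ and $\cKer^j(M_{|E})$. Consequently the local submodules of $M \otimes \cO_{\cU_\Sigma\cap X}$ produced by Definition~\ref{defn:localj} coincide on every chart, so the two subsheaves of $M \otimes \cO_{\Grass(r,V)}$ are equal; being $\GL_n$-stable subsheaves of a common $\GL_n$-equivariant sheaf, this equality is automatically an isomorphism of $\GL_n$-equivariant bundles.

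I expect the only real obstacle to be the bookkeeping that aligns the two conventions: verifying that the $\gl_n$-directions genuinely drop out of $\Theta_s$ after restriction to the locus of $r$-planes inside $V$, and confirming that the $\GL_n$-action arising here from the orbit $X=\GL_n\cdot\epsilon$ is the same $\GL(V)$-action built into the definition of a $\GL$-equivariant $kE$-module. Both actions are the defining (adjoint) action of $\GL_n=\GL(V)$ on $V$, so once the operators are matched no further work is needed; the essential content is simply that passing to $r$-planes contained in the elementary subalgebra $V$ annihilates every term of $\Theta_s$ outside $V$.
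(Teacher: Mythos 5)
Your argument is correct, but it proceeds by a genuinely different route from the paper's. The paper's proof is purely formal: it observes that $\cIm^{j,X}(M)$ is a $\GL_n$-equivariant bundle on the homogeneous space $X \simeq \Grass(r,V)$ with fiber $\Rad^j(\epsilon^*M)$ at $\epsilon$ (by Corollary \ref{cor:equiv} and Theorem \ref{thm:orbit2}, via Remark \ref{rem:gen}), that by \cite[7.5]{CFP2} the bundle $\cIm^j(M_{|E})$ is likewise $\GL_n$-equivariant with the same fiber, and then invokes Proposition \ref{prop:cL2} --- a $G$-equivariant bundle on $G/H$ is determined up to unique equivariant isomorphism by its fiber as an $H$-module --- to conclude; the kernel case is analogous. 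You instead unwind both constructions on the standard affine charts via Theorem \ref{thm:compare} and Definition \ref{defn:localj}, observe that a chart $\cU_\Sigma$ meets $\Grass(r,V)$ only when $\Sigma\subset\{1,\dots,n\}$, that on such a chart the functions $Y^{X,\Sigma}_{i,s}$ vanish for $i>n$ so that $\Theta_s^{X,\Sigma}$ collapses to the operator of \cite{CFP2} for $kE\simeq\fu(V)$, and conclude that the two subsheaves of $M\otimes\cO_{\Grass(r,V)}$ are literally equal. Your approach buys a stronger, more concrete conclusion (equality of subsheaves rather than abstract equivariant isomorphism) and does not need the fiber computation \cite[7.5]{CFP2}; the trade-off is that it depends on knowing that the local operators of \cite{CFP2} are exactly the specialization of (\ref{eq:ThetaW}) to the elementary algebra $V$, which is true but is the kind of convention-matching you flag yourself. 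The paper's route is shorter given the machinery of Section \ref{sec:Gorbit} already in place and is the template reused for the subsequent results on $G_{W,H}$-orbits, where no global chart description is available. Both proofs are sound; the verifications you list as remaining (the $\gl_n$-directions dropping out, and the agreement of the two $\GL_n$-actions, the latter being the adjoint action of $\GL_n\subset G_{1,n}$ on $V$, which is the standard action as noted in Section \ref{sec:Gsemi}) do go through.
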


\begin{proof}
The vector bundle $\cIm^{j,X}(M)$ on 
$X \subset \bE(r,\fg_{1,n})$ is $\GL_n$-equivariant by 
Corollary~\ref{cor:equiv}  
with the fiber at the point $\epsilon \in X$ isomorphic to 
$\Rad^j(\epsilon^*M)$ by Theorem~\ref{thm:orbit2} (see also Remark~\ref{rem:gen}).    
As proved in \cite[7.5]{CFP2}, the vector bundle  
$\cIm^j(M_{|E})$ on $\Grass(r,V)$ is also $\GL_n$-equivariant
with fiber over $\epsilon \in \Grass(r,V)$ 
also isomorphic to $\Rad^j(\epsilon^*M)$. 
Hence, $\cIm^{j,X}(M) \simeq \cIm^j(M_{|E})$ 
by Proposition~\ref{prop:cL2}. 

The argument for the kernels is strictly analogous.  
\end{proof}

As an immediate corollary of Theorem \ref{thm:identify1}, 
we conclude the following interpretation
of the computations of \cite{CFP2}.   
The modules $N, M, R$ of the following proposition
are rational $G_{(1),n}$-modules which do not extend 
to rational $G_{1,n}$-modules.  For example, the 
$G_{(1),n}$ action on $N = S^*(V)/S^{*\geq j+1}(V)$ 
when restricted to $\bV_{(1)}$  increases 
the degree of elements of $N$, whereas the $\GL_n$ structure
is a direct sum of actions on each symmetric power 
$S^i(V)$.  See \cite[3.6]{CFP2} for details of
the $G_{(1),n}$ -- structures on $N, M, R$.

\begin{prop} \cite[7.12,7.11,7.14]{CFP2}
\label{perspect} Let $\epsilon \subset V$ be an 
$r$-plane for some integer $r$, $1\leq r\leq n$, 
and let $X = \GL_n \cdot \epsilon \simeq \Grass(r,n)$ 
be the orbit of  $\epsilon \in \bE(r,\fg_{1,n})$.
We have the following isomorphisms of $\GL_n$-equivariant 
vector bundles on $\Grass(r,n)$:

(1) For the rational $G_{(1),n}$-module 
$N = S^*(V)/S^{*\geq j+1}(V)$ and for any $j$ with $1 \leq j \leq p-1$,
\[\cIm^{j,X}(N) \ \simeq \ S^j(\gamma_r),\] 
where $\gamma_r$ is the canonical rank $r$ 
subbundle of the trivial rank $n$ bundle on $\Grass(r,n)$.

(2)  For the rational $G_{(1),n}$-module 
$M = \Rad^r(\Lambda^*(V))/\Rad^{r+2}(\Lambda^*(V))$,
\[
\cKer^{1,X}(M) \ \simeq \ \cO_X(-1) \oplus \cO_X^{{n\choose r+1}}.
\] 

(3) For the rational $G_{(1),n}$-module 
$R = S^{r(p-1)}(V)/\langle S^{r(p-1)+2}(V); 
v^p, v \in \bV \rangle$,
\[\cKer^{1,X}(R) \ \simeq \ \cO_X(1-p) \oplus 
(\Rad(R) \otimes \cO_X).\]
\end{prop}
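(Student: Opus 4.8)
The plan is to reduce each of the three assertions to the corresponding computation already carried out in \cite{CFP2}, using Theorem~\ref{thm:identify1} as the bridge. That theorem supplies, for any rational $G_{(1),n}$-module $M$, natural isomorphisms of $\GL_n$-equivariant vector bundles on $X \simeq \Grass(r,V)$,
$$
\cIm^{j,X}(M) \simeq \cIm^j(M_{|E}), \qquad \cKer^{j,X}(M) \simeq \cKer^j(M_{|E}),
$$
where $M_{|E}$ is the $\GL$-equivariant $kE$-module attached to $M$ via the equivalence of Proposition~\ref{prop:semi}, and the right-hand bundles are those constructed in \cite{CFP2}. Consequently it suffices, for each of the modules $N$, $M$, $R$, to recognize the associated $kE$-module and then quote the relevant result of \cite{CFP2}.

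First I would treat $N = S^*(V)/S^{*\geq j+1}(V)$. Unwinding Proposition~\ref{prop:semi}, the restriction of the $G_{(1),n}$-action to $\bV_{(1)}$ gives the $kE\simeq k\bV_{(1)}$-action by multiplication in the truncated symmetric algebra, while the $\GL_n$-action is the natural diagonal action on each symmetric power; this is precisely the $\GL$-equivariant $kE$-module whose image bundle is identified in \cite[7.12]{CFP2} with $S^j(\gamma_r)$. Combining with the isomorphism $\cIm^{j,X}(N)\simeq\cIm^j(N_{|E})$ of Theorem~\ref{thm:identify1} yields assertion~(1). The modules $M = \Rad^r(\Lambda^*(V))/\Rad^{r+2}(\Lambda^*(V))$ and $R = S^{r(p-1)}(V)/\langle S^{r(p-1)+2}(V);\, v^p, v\in\bV \rangle$ are handled identically: each carries the $G_{(1),n}$-structure described in \cite[3.6]{CFP2}, whose restriction under Proposition~\ref{prop:semi} is the $\GL$-equivariant $kE$-module for which \cite[7.11]{CFP2} and \cite[7.14]{CFP2} compute the kernel bundles to be $\cO_X(-1)\oplus\cO_X^{{n\choose r+1}}$ and $\cO_X(1-p)\oplus(\Rad(R)\otimes\cO_X)$, respectively. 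Invoking $\cKer^{j,X}(-)\simeq\cKer^j((-)_{|E})$ once more gives assertions~(2) and~(3).

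The only point requiring genuine verification — and hence the main, if modest, obstacle — is the bookkeeping that identifies the $kE$-module structures arising from the explicit $G_{(1),n}$-actions on $N$, $M$, $R$ with the very $\GL$-equivariant $kE$-modules for which \cite[7.11,7.12,7.14]{CFP2} perform their calculations. This amounts to matching conventions in the equivalence of Proposition~\ref{prop:semi}: since that equivalence sends a rational $G_{(1),n}$-module to the pair consisting of its restriction to $\bV_{(1)}$ (the $kE$-action) and its restriction to $\GL_n$, and since the structures in \cite[3.6]{CFP2} are specified by exactly this pair of actions, the identifications are immediate once the chosen isomorphism $kE\simeq k\bV_{(1)}$ is fixed. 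No further computation is needed, as the delicate fiber and equivariance arguments have already been absorbed into Theorem~\ref{thm:identify1} and the cited results of \cite{CFP2}.
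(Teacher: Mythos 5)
Your proposal is correct and follows the same route as the paper: the paper likewise presents this proposition as an immediate corollary of Theorem~\ref{thm:identify1}, identifying $\cIm^{j,X}(-)$ and $\cKer^{j,X}(-)$ with the bundles $\cIm^j((-)_{|E})$ and $\cKer^j((-)_{|E})$ of \cite{CFP2} and then quoting \cite[7.12, 7.11, 7.14]{CFP2}, with the matching of the $G_{(1),n}$-structures on $N$, $M$, $R$ to the $\GL$-equivariant $kE$-modules of \cite[3.6]{CFP2} handled exactly as you describe via Proposition~\ref{prop:semi}.
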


We point out that specializing Proposition~\ref{perspect}.1 
to the case $j=1$ gives a realization of the
canonical subbundle $\gamma_r$ on 
the Grassmannian as an image bundle  of the 
$G_{(1),n}$-module $S^*(V)/S^{\geq 2}(V)$ different 
from the realization of $\gamma_r$ given in  
Proposition~\ref{prop:can-gen}.1.

Our new examples of vector bundles arise by 
considering subgroup schemes of $G_{(1),n}$
which we now introduce.

\begin{defn}
\label{defn:W}
Let $H$ be an algebraic group and $W$ a faithful, 
finite dimensional rational $H$-module of dimension $n$; 
let $\bW$ be the associated vector group ($\simeq \bG_a^{\oplus n}$) 
equipped with the action of $H$.  Let 
$$ 
G_{W,H} \ \equiv \ \bW_{(1)}\rtimes H \subset \bW\rtimes H,
$$
and let
\[
\fg_{W,H} = \Lie( G_{W,H}).
\]
For any subspace $\epsilon \subset W$ of dimension $r$, 
we identify the $\bW\rtimes H$-orbit (i.e., adjoint orbit) of $\epsilon \in
\bE(r,\fg_{W,H})$ with $Y = H\cdot \epsilon 
\subset \ \Grass(r,W) \subset \bE(r,\fg_{W,H})$,
where $H$ acts on $\Grass(r,W)$ as the
restriction of the standard quotient $\GL(W) \to \Grass(r,W)$.

If $\rho: H \to \GL_n$ defines the representation of $H$ on $W$, 
then $\rho$ induces closed embeddings
$$\\ \bW\rtimes H \ \subset \ G_{1,n}, 
\quad G_{W,H} \ \subset \ G_{(1),n}.$$
\end{defn}

We next apply Corollary~\ref{cor:equiv} together with Remark~\ref{rem:gen} 
to $\wt G = G_{W,H}$ and $G = H$ to obtain the following equivariance statement. 

\begin{cor}
\label{prop:W}
Using the notation and terminology of Definition \ref{defn:W}, let
$Y = H\cdot \epsilon \subset \ \Grass(r,W) \subset \bE(r,g_{W,H})$ 
be the $W \rtimes H$-orbit 
of some $\epsilon \subset W$, a dimension $r$ subspace 
of the rational $G_{W,H}$-module $W$.
Let $M$ be a finite dimensional rational $G_{W,H}$-module.
For any $j, \ 1 \leq j \leq (p-1)r$, 
the image and kernel sheaves
$$\cIm^{j,Y}(M), \ \cKer^{j,Y}(M)$$
on $Y$ are $H$-equivariant  algebraic vector bundles.
\end{cor}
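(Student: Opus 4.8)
The plan is to recognize this statement as a direct instance of the generalized framework recorded in Remark~\ref{rem:gen}, applied to the affine group scheme $\wt G = G_{W,H}$ with reduced closed subgroup $G = H$. First I would observe that the inclusion $H \hookrightarrow \bW_{(1)} \rtimes H = G_{W,H}$ as the second factor exhibits $H$ as a closed reduced algebraic subgroup of $G_{W,H}$, and that $\fg_{W,H} = \Lie(G_{W,H})$ contains $W = \Lie(\bW_{(1)})$ as the elementary ideal into which $\epsilon$ embeds. Thus $\epsilon \in \bE(r,\fg_{W,H})$, and since $\bW$ acts trivially on $W$, the $\bW\rtimes H$-orbit of $\epsilon$ coincides with the $H$-orbit $Y = H\cdot\epsilon$ under the action on $\bE(r,\fg_{W,H})$ induced by the adjoint action of $G_{W,H}$ on $\fg_{W,H}$. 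These are precisely the hypotheses of Remark~\ref{rem:gen}.

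Next, since $M$ is a rational $G_{W,H}$-module, it is in particular a $\fu(\fg_{W,H})$-module (so that $\cIm^{j,Y}(M)$ and $\cKer^{j,Y}(M)$ are defined via Theorem~\ref{thm:equiv3}) and it restricts to a rational $H$-module. By Remark~\ref{rem:gen}, the proof of Corollary~\ref{cor:equiv} goes through verbatim in this setting: the defining operators $\Theta_s^{\wt Y}$ of \eqref{thetaX} are maps of $H$-equivariant sheaves on $\wt Y$, so their iterated images and kernels are $H$-equivariant, and descent along the $\GL_r$-torsor $\wt Y \to Y$ produces $H$-equivariant coherent sheaves $\cIm^{j,Y}(M)$ and $\cKer^{j,Y}(M)$ on $Y$.

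Finally, I would invoke the orbit argument of Theorem~\ref{thm:orbit2}: the orbit $Y = H\cdot\epsilon$ is locally closed (open in its closure) and homogeneous under $H$, so for any $y = h\cdot\epsilon$ the action of $h$ carries the fiber at $\epsilon$ isomorphically onto the fiber at $y$. Since $Y$ is Noetherian, a coherent sheaf whose fibers all have the same dimension is locally free; hence the $H$-equivariant sheaves $\cIm^{j,Y}(M)$ and $\cKer^{j,Y}(M)$ are algebraic vector bundles, as asserted.

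I expect the only point requiring genuine care, rather than a mechanical citation, is the verification that the hypotheses of Remark~\ref{rem:gen} are literally met: namely that $G_{W,H}$ is a bona fide affine group scheme with $H$ closed and reduced inside it, and that the adjoint action of the \emph{non-reduced} group scheme $G_{W,H}$ on $\fg_{W,H}$ restricts to the evident $H$-action on $W \subset \fg_{W,H}$ under which $Y$ is an orbit. Once this identification is made explicit, the conclusion is immediate from the cited results.
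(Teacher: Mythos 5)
Your proposal is correct and follows essentially the same route as the paper: the paper derives this corollary precisely by applying Corollary~\ref{cor:equiv} together with Remark~\ref{rem:gen} to $\wt G = G_{W,H}$ and $G = H$, with the vector-bundle conclusion coming from the orbit argument of Theorem~\ref{thm:orbit2} in the generalized setting. Your extra care in checking that $H$ sits as a closed reduced subgroup of the (non-reduced) group scheme $G_{W,H}$ and that $Y$ is genuinely the relevant orbit is exactly the point the paper leaves implicit.
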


As in Definition 5.7, we consider the adjoint action of $H$ on $\epsilon \in
\Grass(r,W)$.  Let $H_\epsilon \subset H$ denote the (reduced) stabilizer of $\epsilon$
and let 
\begin{equation}
\label{eq:phi}
\xymatrix@=14pt{\phi: H/H_\epsilon \ar[r]& Y=H \cdot \epsilon}  
\end{equation}
denote the morphism of varieties induced by the orbit map $H \to H\cdot 
\epsilon \subset \Grass(r, W)$.  We recall that $\phi$ is always a homeomorphism, and it is   
an isomorphism of varieties if the orbit map is separable.

We easily extend the computations of Proposition 
\ref{perspect} by considering the rational 
$G_{(1),n}$-modules $N$, $M$, $R$ upon restriction  
to $G_{W,H} \subset G_{(1),n}.$
If $i: Y \subset X$ is an embedding of a locally closed subvariety $Y$ in
a quasi-projective variety $X$ and if $\cE$ is
an algebraic vector bundle on $X$, then we denote 
by $\cE_{|Y}$ the restriction $i^*\cE$ of $\cE$
to $X$.  Simillarly, if $i:H \to G$ is a closed 
embedding of affine group schemes and $M$ is a 
rational $G$-module, then we denote by $M_{|H}$ 
the restriction of $M$ to $H$.

\begin{thm}
\label{thm:H}
Retain the context and notation of Definition~\ref{defn:W}, 
and assume that  the map $\phi: H/H_\epsilon \to Y=H\cdot \epsilon$ 
of \eqref{eq:phi}  is an isomorphism.
We have the following isomorphisms of 
$H$-equivariant vector bundles on $Y \ \subset \ \Grass(r,W)$:

(1) For  the rational $G_{(1),n}$-modules 
$N = S^*(W)/S^{*\geq j+1}(W)$ and any $j$ such that $1 \leq j \leq p-1$,
\[
\cIm^{j,Y}(N_{|G_{W,H}}) \ \simeq \ S^j(\gamma_r)_{|Y},\] 
where $\gamma_r$ denotes  the canonical rank $r$ subbundle on $\Grass(r,W)$.

(2)  For the rational $G_{(1),n}$-modules 
$M = \Rad^r(\Lambda^*(W)/\Rad^{r+2}(\Lambda^*(W))$,
\[
\cKer^{1,Y}(M_{|G_{W,H}}) \ \simeq \ \cO_Y(-1) \oplus \cO_Y^{{n\choose r+1}}.
\]

(3) For the rational $G_{(1),n}$-modules $R = S^{r(p-1)}(W)/\langle 
S^{r(p-1)+2}(W); v^p, v \in V \rangle$,
\[
\cKer^{1,Y}(R_{|G_{W,H}}) \ \simeq \ \cO_Y(1-p) \oplus 
(\Rad(R) \otimes \cO_Y).\]
\end{thm}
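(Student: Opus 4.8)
The plan is to deduce all three isomorphisms from the corresponding computations over the full Grassmannian in Proposition~\ref{perspect}, by showing that restricting the ambient $G_{(1),n}$-module to $G_{W,H}$ and restricting the associated bundle from $\Grass(r,W)$ to the orbit $Y$ are compatible operations. Write $X = \GL_n \cdot \epsilon \simeq \Grass(r,W) = \Grass(r,n)$ for the full Grassmannian regarded as a $\GL_n$-orbit in $\bE(r,\fg_{1,n})$, and regard $Y = H \cdot \epsilon \subset X$ as the $H$-orbit of the same point. Since $N, M, R$ in Theorem~\ref{thm:H} are the restrictions to $G_{W,H}$ of the $G_{(1),n}$-modules of the same names in Proposition~\ref{perspect}, the heart of the matter is the single comparison
\[
\cIm^{j,Y}(L_{|G_{W,H}}) \simeq \cIm^{j,X}(L)_{|Y}, \qquad \cKer^{j,Y}(L_{|G_{W,H}}) \simeq \cKer^{j,X}(L)_{|Y}
\]
for any rational $G_{(1),n}$-module $L$. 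Granting this, I would substitute $L = N, M, R$, invoke Proposition~\ref{perspect} to identify $\cIm^{j,X}(L)$ and $\cKer^{1,X}(L)$ with $S^j(\gamma_r)$, $\cO_X(-1) \oplus \cO_X^{{n\choose r+1}}$, and $\cO_X(1-p) \oplus (\Rad(R)\otimes\cO_X)$ respectively, and restrict these bundles to $Y$, noting $\cO_X(-1)_{|Y} = \cO_Y(-1)$ and that $S^j(\gamma_r)_{|Y}$ is the stated power of the canonical subbundle restricted to $Y$.

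To prove the comparison I would compare fibers and appeal to Proposition~\ref{prop:cL2}. Both $\cIm^{j,Y}(L_{|G_{W,H}})$ and $\cIm^{j,X}(L)_{|Y}$ are $H$-equivariant vector bundles on $Y$: the former by Corollary~\ref{prop:W}, the latter because $\cIm^{j,X}(L)$ is $\GL_n$-equivariant (Corollary~\ref{cor:equiv}, in the form of Remark~\ref{rem:gen}) and $Y \subset X$ is $H$-stable. Under the standing hypothesis that $\phi\colon H/H_\epsilon \to Y$ of \eqref{eq:phi} is an isomorphism, Proposition~\ref{prop:cL2} shows such a bundle is determined up to canonical $H$-equivariant isomorphism by its fiber over $\epsilon$ as a module over the reduced stabilizer $H_\epsilon$. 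By Theorem~\ref{thm:orbit2} (again via Remark~\ref{rem:gen}) the fiber of $\cIm^{j,Y}(L_{|G_{W,H}})$ at $\epsilon$ is $\Rad^j(\epsilon^*(L_{|G_{W,H}}))$, while the fiber of $\cIm^{j,X}(L)_{|Y}$ at $\epsilon$ is $\Rad^j(\epsilon^* L)$. The key observation is that these two $H_\epsilon$-modules coincide: the closed embedding $G_{W,H} \hookrightarrow G_{(1),n}$ carries $\bW_{(1)}$ identically onto $\bV_{(1)}$ (via the common basis identifying $W$ with $V$) and carries $H$ into $\GL_n$ by $\rho$, so $\epsilon \subset W = \Lie(\bW_{(1)}) = \Lie(\bV_{(1)})$ acts on $L_{|G_{W,H}}$ exactly as it does on $L$, and the $H_\epsilon$-action on $\Rad^j(\epsilon^* L)$ is in both cases the one induced from the $G_{(1),n}$-action restricted along $H_\epsilon \subset H \subset \GL_n$. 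Thus the fibers agree as $H_\epsilon$-modules, and Proposition~\ref{prop:cL2} yields the asserted isomorphism; the kernel case is identical with $\Soc^j$ in place of $\Rad^j$.

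I expect the main obstacle to be purely bookkeeping: keeping the two equivariant structures straight and verifying that the embedding $G_{W,H} \hookrightarrow G_{(1),n}$ really is the identity on the unipotent part $\bW_{(1)} = \bV_{(1)}$, so that the elementary-subalgebra action — and hence the radical and socle fibers together with their $H_\epsilon$-module structure — is literally unchanged by restriction of the module. Once this is pinned down, no new geometry is required beyond Proposition~\ref{perspect}; in particular the whole cohomological content, namely the appearance of the Pl\"ucker bundles $\cO(-1)$, $\cO(1-p)$ and of $S^j(\gamma_r)$, is inherited wholesale from the full Grassmannian and merely pulled back along the orbit inclusion $Y \hookrightarrow \Grass(r,W)$.
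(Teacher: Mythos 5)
Your proposal is correct and follows essentially the same route as the paper: reduce to the single comparison $\cIm^{j,Y}(L_{|G_{W,H}})\simeq \cIm^{j,X}(L)_{|Y}$ for an arbitrary rational $G_{(1),n}$-module $L$, establish it by matching the fibers at $\epsilon$ as $H_\epsilon$-modules (both being $\Rad^j(\epsilon^*L)$, resp.\ $\Soc^j$) and invoking the uniqueness of $H$-equivariant bundles on $Y\simeq H/H_\epsilon$ with a prescribed fiber, and then feed in Proposition~\ref{perspect} for $L=N,M,R$. Your explicit check that $\bW_{(1)}$ is carried identically onto $\bV_{(1)}$, so that the elementary subalgebra $\epsilon$ acts the same way before and after restriction, is the right point to pin down and is exactly what the paper's appeal to Theorem~\ref{thm:functor} implicitly relies on.
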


\begin{proof}  
Let $L$ be a rational representation of $G_{(1),n}$.
Theorem~\ref{thm:functor} implies that the fibers 
above $\epsilon \in Y$ of $\cIm^{j,Y}(L_{|G_{W,H}})$, 
$\cIm^{j, X}(L)_{|Y}$ are both isomorphic to  $\Rad^j(\epsilon^*L)$
as modules for $H_\epsilon \subset H$.  
Since both $\cIm^{j,Y}(L_{|G_{W,H}})$, $\cIm^{j, X}(L)_{|Y}$ are 
$H$-equivariant coherent sheaves on 
$Y \simeq H/H_\epsilon$, we conclude that they are isomorphic by 
Theorem~\ref{thm:functor}.

The first statement now follows immediately from 
Proposition~\ref{perspect} and the above observation 
applied to $N$. The proofs for (2) and (3) are completely analogous.
\end{proof}

We restate as a corollary the following special 
case of Theorem \ref{thm:H}(1)  We can
interpret this corollary as saying for any affine 
algebraic group $H$ and any subgroup
$S \subset H$ which is the stabilizer of some 
$r$-dimensional subspace $\epsilon $ of an $H$-module
$W$ that the $H$-equivariant vector 
bundle $\cL_{H/S}(\epsilon)$ on $H/S$
can be realized as $\cIm^{1,Y}(M)$ for some 
$G_{W,H}$-representation $M$.

\begin{cor}
\label{cor:im1}
Let $H$ be an affine algebraic group, and let $W$ 
be a finite dimensional rational $H$-module. 
Choose an $r$-dimensional subspace $\epsilon \subset W$, 
let $ S\subset H$ be the (reduced) stabilizer of $\epsilon$, 
and assume that $\phi: H/S \to Y$ induced by the orbit map is an isomorphism.
Then there exists a rational $G_{W,H}$-module $M$ such that 
$$
\cIm^{1,Y}(M) \ \simeq \  (\gamma_r)_{|Y} \ \simeq \ \cL_{H/S}(\epsilon)
$$ 
as $H$-equivariant algebraic vector bundles on 
$Y \subset \Grass(r,W) \subset \bE(r,g_{W,H})$.  
Here, $\gamma_r$ is the canonical rank $r$ subbundle
of $W \otimes \cO_{\Grass(r,W)}$.

\end{cor}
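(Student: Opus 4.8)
The plan is to recognize this corollary as the $j=1$ instance of Theorem~\ref{thm:H}(1) and then to supply the identification of the two descriptions of the resulting bundle. First I would take $M$ to be the restriction to $G_{W,H}$ of the rational $G_{(1),n}$-module $N = S^*(W)/S^{*\geq 2}(W)$ appearing in Theorem~\ref{thm:H}(1) (the case $j=1$, so that $S^{*\geq j+1} = S^{*\geq 2}$). Applying Theorem~\ref{thm:H}(1) with $j=1$ then gives an isomorphism of $H$-equivariant vector bundles
\[
\cIm^{1,Y}(M) \;\simeq\; S^1(\gamma_r)_{|Y} \;=\; (\gamma_r)_{|Y},
\]
since the first symmetric power functor is the identity. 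This already yields the existence of the required module $M$ together with the first asserted isomorphism.

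For the second isomorphism $(\gamma_r)_{|Y} \simeq \cL_{H/S}(\epsilon)$, I would invoke Corollary~\ref{cor:cansub}, where, to match notation, the group $G$ there is our $H$, the subgroup $H$ there is our $S$, the small representation $W$ there is our $\epsilon$ (viewed as an $S$-module via the restriction of the $H$-action, which makes sense because $S$ stabilizes $\epsilon$), and the ambient representation $V$ there is our $W$. That corollary produces the morphism $f: H/S \to \Grass(r,W)$ sending $gS$ to $g\cdot\epsilon$ together with the identification $\cL_{H/S}(\epsilon) = f^*(\gamma_r)$. The key observation is that this $f$ is precisely the orbit map $\phi: H/S \to Y = H\cdot\epsilon$ of \eqref{eq:phi} followed by the inclusion $Y \subset \Grass(r,W)$; since $\phi$ is assumed to be an isomorphism, $f^*(\gamma_r) = \phi^*(\gamma_r) = (\gamma_r)_{|Y}$, giving $\cL_{H/S}(\epsilon) \simeq (\gamma_r)_{|Y}$.

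The substantive content has already been carried out in Theorem~\ref{thm:H}(1) (which rests on Theorem~\ref{thm:functor} and the computations of \cite{CFP2}), so the only genuine steps remaining are bookkeeping. The one point deserving care is the compatibility of the two $H$-equivariant structures: one must check that the fiber $\Rad(\epsilon^*M)$ of $\cIm^{1,Y}(M)$ over $\epsilon$ is $\epsilon$ itself as an $S$-module — which holds because multiplication by elements of $\epsilon$ carries the degree-zero part of $N$ isomorphically onto $\epsilon \subset W$ and annihilates the degree-one part — so that both descriptions of the bundle are made compatible with the action of $S = H_\epsilon$, in accordance with the conventions of Theorem~\ref{thm:functor} and Corollary~\ref{cor:cansub}. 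I expect no serious obstacle beyond this matching of equivariant structures.
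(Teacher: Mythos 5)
Your proof is correct and follows essentially the same route as the paper's: the first isomorphism is obtained as the $j=1$ case of Theorem~\ref{thm:H}(1) with $M = N_{|G_{W,H}}$ for $N = S^*(W)/S^{*\geq 2}(W)$, and the identification $(\gamma_r)_{|Y} \simeq \cL_{H/S}(\epsilon)$ comes down to matching the fiber at $\epsilon$ as an $S$-module. The only cosmetic difference is that the paper invokes Proposition~\ref{prop:cL2} directly for this matching, whereas you route it through Corollary~\ref{cor:cansub} (itself an immediate consequence of that proposition); your extra check that $\Rad(\epsilon^*M) = \epsilon$ is correct and harmless.
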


\begin{proof}  The isomorphism $\cIm^{1,Y}(M)  \ \simeq \ (\gamma_r)_{|Y}$
 is a special case of Theorem~\ref{thm:H}(1) for $j=1$. 

Note that the given action of $H$ on $W$ 
induces an action on $\Grass(r, W)$ and also makes 
the canonical subbundle $\gamma_r$ on $\Grass(r, W)$ $H$-equivariant. 
The action of $S$ on the fiber  of 
$\gamma_r$ (an $r$-dimensional subspace of $W$) above the point $\epsilon \in \Grass(r,W)$ is the 
restriction of the action of $H$ on $W$. 
Similarly, the action of $S$ on the fiber
of $\cL_{H/S}(\epsilon)$ above the point $eH \in H/S$ is the restriction 
 to $S$ acting on this fiber
of the action of $H$ on $W$.  Hence, 
$\cL_{H/S}(\epsilon) \simeq (\gamma_r)_{|Y}$   
by Prop.~\ref{prop:cL2}.  
\end{proof}

In the following proposition, we consider the 
evident semi-direct product $G_{\fh,H} \equiv \fh \rtimes H$ 
determined by the adjoint action of $H$ on $\Lie(H) = \fh$.

\begin{prop}
\label{prop:cot} 
Let $H$ be a simple algebraic group, and 
assume that $p>2h-2$ where $h$ is the Coxeter number for $H$. 
Let $P$ be a standard cominuscule parabolic  of $H$, 
let $\fu$ be the nilradical of $\Lie(P)$, and let $Y = H \cdot \fu \subset
\Grass(\dim(\fu),\fh)$ be the orbit of $\fu$ under the adjoint action of $H$.  

Let $\Omega_{H/P}$ be the cotangent bundle on $H/P \simeq Y$.
Then for any $j$, $1 \leq j \leq p-1$, there exists a 
rational $G_{\fh,H}$-module $N$ such that 
\[
\cIm^{j,Y}(N) \  \simeq \ S^j(\Omega_{H/P}).\]
\end{prop}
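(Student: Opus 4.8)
The plan is to reduce the statement to Theorem~\ref{thm:H}(1) together with the identification of the restricted canonical subbundle with the cotangent bundle. First I would take $W = \fh$, the adjoint module of $H$ (replacing $H$ by its adjoint group if needed to ensure $\fh$ is a faithful $H$-module, exactly as in the proof of Theorem~\ref{thm:sep}), and $\epsilon = \fu$, the nilradical of the cominuscule parabolic $P$, so that $r = \dim \fu$ and the ambient semidirect product is $G_{\fh,H} = \fh \rtimes H \subset G_{(1),n}$. By Proposition~\ref{prop:comin}(1) the subspace $\fu$ is indeed an elementary subalgebra, and its reduced stabilizer under the adjoint action is exactly $P$, since $\fp$ is the normalizer of $\fu$. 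The orbit $Y = H\cdot\fu$ is the image of the projective variety $H/P$ and hence closed in $\Grass(\dim\fu,\fh)$; because $p > 2h-2$, Theorem~\ref{thm:sep} guarantees the orbit map is separable, so that $\phi\colon H/P \xrightarrow{\ \sim\ } Y$ is an isomorphism. This is precisely the hypothesis needed to invoke Theorem~\ref{thm:H}.

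Next I would apply Theorem~\ref{thm:H}(1) directly. Taking the rational $G_{(1),n}$-module $N = S^*(\fh)/S^{*\geq j+1}(\fh)$ and restricting it to $G_{\fh,H}$, Theorem~\ref{thm:H}(1) yields, for each $j$ with $1 \leq j \leq p-1$, an isomorphism of $H$-equivariant vector bundles
\[
\cIm^{j,Y}(N_{|G_{\fh,H}}) \ \simeq \ S^j(\gamma_r)_{|Y},
\]
where $\gamma_r$ is the canonical rank $r$ subbundle on $\Grass(\dim\fu,\fh)$.

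It then remains to identify $S^j(\gamma_r)_{|Y}$ with $S^j(\Omega_{H/P})$, and for this I would first treat the rank one case. By Corollary~\ref{cor:im1} (with stabilizer $S = P$) we have $(\gamma_r)_{|Y} \simeq \cL_{H/P}(\fu)$, where $\fu$ carries its $P$-module structure from the adjoint action. Since $p > 2h-2$ forces the existence of a nondegenerate $H$-invariant symmetric bilinear form on $\fh$, Proposition~\ref{tan-cot}(2) identifies $\cL_{H/P}(\fu)$ with the cotangent bundle $\Omega_{H/P}$. Applying the symmetric power functor $S^j$ (which commutes with $\cL$ and with restriction by Proposition~\ref{prop:cL}(3)) gives $S^j(\gamma_r)_{|Y} \simeq S^j(\Omega_{H/P})$, and hence $\cIm^{j,Y}(N) \simeq S^j(\Omega_{H/P})$ with $N \equiv N_{|G_{\fh,H}}$, as desired.

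The main obstacle is not the computation but verifying that the two structural hypotheses hold simultaneously under the single assumption $p > 2h-2$: namely that the orbit map $H \to H\cdot\fu$ is separable, so that Theorem~\ref{thm:H} applies, and that $\fh$ admits a nondegenerate invariant form, so that Proposition~\ref{tan-cot}(2) identifies $\cL_{H/P}(\fu)$ with $\Omega_{H/P}$. Both are furnished by the Coxeter number bound, so once these citations are in place the remaining steps are formal.
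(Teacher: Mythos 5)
Your proposal is correct and follows essentially the same route as the paper: invoke Theorem~\ref{thm:sep} (via the $p>2h-2$ hypothesis) to identify $H/P$ with $Y$, apply Theorem~\ref{thm:H}(1) with $N = S^*(\fh)/S^{*\geq j+1}(\fh)$ to get $\cIm^{j,Y}(N) \simeq S^j(\gamma_r)_{|Y}$, and then use Corollary~\ref{cor:im1} together with Proposition~\ref{tan-cot}(2) and the existence of a nondegenerate invariant form to identify $(\gamma_r)_{|Y}$ with $\Omega_{H/P}$. Your additional remarks (closedness of the orbit so that Theorem~\ref{thm:sep} applies, and faithfulness of the adjoint module after passing to the adjoint form) are details the paper leaves implicit, but they do not change the argument.
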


\begin{proof} Recall that $P$ is the (reduced) stabilizer of 
$\fu \subset \fg$ for the adjoint action of $G$ on $\fg$.  
Theorem~\ref{thm:sep} implies that the orbit map induces 
an isomorphism $H/P  \simeq Y$.
Let $r=\dim \fu$. By Theorem \ref{thm:H}(1), we can 
find a rational $G_{\fh, H}$-module $N$ 
such that  $\cIm^{j,Y}(N)  \simeq S^j(\gamma_r)_{|Y} = 
S^j((\gamma_r)_{|Y})$ where $\gamma_r$ 
is the canonical rank $r$  subbundle on $\Grass(r, \fh)$.  
As shown in Corollary~\ref{cor:im1}, 
$(\gamma_r)_Y \ \simeq \ \cL_{H/S}(\fu)$ which is
isomorphic to $\Omega_{Y}$ by Proposition~\ref{tan-cot}(2).
The latter is applicable since the assumption on $p$ implies that $\fh$ 
admits a nondegenerate, $H$-invariant,  symmetric bilinear form (see \cite[p.48]{Sel}). 

\end{proof} 

We apply Proposition~\ref{prop:cot} to simple groups of type $C_n$ to obtain 
 the following realization results for bundles on the 
Lagrangian Grassmannian (cf. Propositions \ref{symplec} and \ref{tan-cot}).

\begin{ex}
\label{Sp2m}
Assume $p>4n-2$. Take $H = \Sp_{2n}$, and let $\epsilon \subset \fh=\fsp_{2n}$ 
be the nilpotent radical of the Lie algebra
of the standard cominuscule parabolic subgroup 
$P_{\alpha_n} \subset H$; let $r = \dim \epsilon = {n+1\choose2}$.   
Consider $Y = H\cdot \epsilon  \simeq \LG(n,n) \simeq \bE(r,\fh)$ 
as in (\ref{lagrange}). 
Then there is a rational 
$G_{\fh, H}$-module $N$ such that  
\[
\cIm^{j,Y}(N_{|G_{\fh,H}}) \ \simeq  \ S^j(\Omega_Y)  \ \simeq  \ S^{2j}(\gamma_n)
\]
for any $j$, $ 1 \leq j\leq p-1$. 
Here, $\gamma_n$ is the canonical rank $n$ 
subbundle on $\LG(n,n)$.
\end{ex}

\end{document}